\tikzset{
    side by side/.style 2 args={
    line width=1pt,
    #1,
	transform canvas = {yshift = 0pt, xshift = 0.5pt},
    postaction={
        postaction={draw,#2, line width = 1pt, transform canvas = {yshift = 0pt, xshift = -1pt}}
        }
    }
}
\tikzstyle{treeNode}[black]=[draw=#1,fill=#1!20, circle, anchor=center,minimum size = 0.5cm]
\tikzstyle{treeNodeLabeled}[black]=[draw=#1, circle, inner sep = 1pt, anchor=center, minimum size = 0.5cm]
\tikzstyle{treeNodeMarked}[red]=[draw=#1,fill=#1!20, circle, minimum size = 0.5cm, inner sep = 1pt,anchor=center]
\tikzstyle{treeNodeBlue}[blue]=[draw=#1,fill=#1!20, circle, minimum size = 0.5cm, inner sep = 1pt,anchor=center]
\tikzstyle{treeNodeInner}=[inner sep = 0pt, circle, fill]
\tikzstyle{treeNodeRoot}=[inner sep = 2pt, circle, fill]
\tikzstyle{treeEdgeMarked}=[red, very thick]
\tikzstyle{treeEdgeBlue}=[{blue!65}, very thick]
\tikzstyle{treeEdgeDouble}=[side by side={red!80}{blue!65}, bend right]
\DeclareMathOperator{\tr}{tr}
\DeclareMathOperator{\im}{Im}
\DeclareMathOperator{\profile}{Profile}
\DeclarePairedDelimiter{\abs}{\lvert}{\rvert}
\newcommand{\R}{\mathbb{R}}
\newcommand{\T}{\mathcal{T}}
\newcommand{\trees}{\mathbb{T}}
\renewcommand{\L}{\mathcal{L}}
\renewcommand{\O}{\mathcal{O}}
\newcommand{\p}{\mathcal{P}}
\newcommand{\Aut}{\text{Aut}}
\newcommand{\A}{\mathcal{A}}
\newcommand{\V}{\mathbf{V}}
\newcommand{\cat}[1]{\mathrm{Cat}_{#1}}
\newcommand{\dcat}[2]{\mathrm{DCat}^{#1,#2}}
\newcommand{\subgraph}[2]{#1[#2]}
\newcommand*\circled[1]{\tikz[baseline=(char.base)]{
    \node[shape=circle,draw,inner sep=.6pt, scale=.75] (char) {#1};}}
\newcommand*{\red}[1]{
{\color{red}{#1}}
}
\newcommand{\reviseSecond}[1]{#1} 
\newcommand{\revise}[1]{#1} 
\newcommand{\revisedelete}[1]{} 
\newcommand{\revisedeletemath}[1]{}
\newcommand{\restrict}[2]{\revise{{#1}[#2]}}
\newtheorem{theorem}{Theorem}[section]
\newtheorem{claim}{Claim}[theorem]
\newtheorem{lemma}{Lemma}[theorem]
\newtheorem{remark}{Remark}[theorem]
\newtheorem{conjecture}{Conjecture}
\begin{document}

\captionsetup[figure]{labelfont={bf},name={Fig.},labelsep=space}

\title{Getting to the Root of the Problem: \\ Sums of Squares for \revise{Limits of} Trees
	\thanks{This research was funded in part by the Austrian Science Fund (FWF) [10.55776/DOC78]. For open access purposes, the author has applied a CC BY public copyright license to any author-accepted manuscript version arising from this submission.}
}

\renewcommand{\headeright}{D.\ Brosch and D.\ Puges}
\renewcommand{\undertitle}{}
\renewcommand{\shorttitle}{Sums of Squares for Limits of Trees}

\author[1]{Daniel Brosch \orcidlink{0000-0003-3988-4336}
}
\author[1]{Diane Puges \orcidlink{0000-0002-3864-0762}
}
\affil[1]{Alpen-Adria-Universit\"at Klagenfurt,
	Universit\"atsstra{\ss}e~65--67, 9020 Klagenfurt, Austria, 
	\href{mailto:daniel.brosch@aau.at}{daniel.brosch@aau.at},
	\href{mailto:diane.puges@aau.at}{diane.puges@aau.at}}

\date{}

\maketitle

\begin{abstract}
	The inducibility of a graph represents its maximum density as an induced subgraph over all possible sequences of graphs of size growing to infinity. This invariant of graphs has been extensively studied since its introduction in $1975$ by Pippenger and Golumbic. In $2017$, Czabarka, Sz\'ekely and Wagner extended this notion to leaf-labeled rooted binary trees, which are objects widely studied in the field of phylogenetics. They obtain the first results and bounds for the densities and inducibilities of such trees. Following up on their work, we apply Razborov's flag algebra theory to this setting, introducing the flag algebra of rooted leaf-labeled binary trees. This framework allows us to use polynomial optimization methods, based on semidefinite programming, to efficiently obtain new upper bounds for the inducibility of trees and to improve existing ones. Additionally, we obtain the first outer approximations of profiles of trees, which represent all possible simultaneous densities of a pair of trees \revise{in a sequence of trees of growing sizes}. Finally, we are able to prove the non-convexity of some of these profiles.
	\keywords{Inducibility \and binary tree \and graph profile \and flag algebras \and semidefinite programming}
	\subjclass{05C35 \and 05C60 \and 90C22 \and 90C23}
\end{abstract}

\section{Introduction}
\label{sec:intro}


Asymptotic extremal graph theory studies the behaviour of graphs whose number of vertices grows towards infinity. It is an old and well-studied area of graph theory in which many areas of mathematics meet, and where even simple-sounding conjectures can prove extremely challenging.
A main topic in this area is the problem of finding the inducibility of a graph, i.e., its maximum density \revise{as an induced subgraph} in an arbitrarily large graph\revise{~\cite{pippenger1975InducibilityGraphs}}. It is a very hard problem that remains open even for many \revise{``easy''} graphs despite being widely studied: for instance, \revisedelete{even determining} the inducibility of the $5$-cycle \revisedelete{is extremely hard} \revise{was determined in $2018$~\cite{Balogh2016}, more than $40$ years after being conjectured by Pippenger and Golumbic~\cite{pippenger1975InducibilityGraphs}}. An adjacent problem consists in determining the profile of a set of graphs: that is, the exact relations between the densities of these graphs \revise{in a common large graph}. In $2017$,  Czabarka, Sz\'ekely and Wagner~\cite{wagner2017} \revise{extended} the notions of density and inducibility to leaf-labeled rooted binary trees, a type of trees stemming from phylogenetics. They \revise{obtained} the first results and bounds for the inducibilities of these trees.
In this paper, we follow up on their work and apply\revise{, for the first time,} flag algebra theory to the setting of leaf-labeled rooted binary trees. Flag algebras are a powerful tool \revise{in} extremal combinatorics, introduced in $2007$ by Razborov~\cite{razborov2007}. They have been successfully used to tackle and solve several very challenging problems of graph theory\revise{~\cite{RAZBOROV2008,FALGASRAVRY2012,Grzesik2012,Keevash2011,BALOGH2014,Lidicky2021,Balogh2023}}, by allowing the application of tools from polynomial optimization, computationally in the form of semidefinite programming, to extremal combinatorics. Here, we use them to develop a computer-assisted way to obtain strong rigorous bounds on inducibilities of trees. As main results, we recover all known inducibilities of small trees (up to a small epsilon), obtain over $300$ new bounds on inducibilities, and the first outer approximations of profiles of trees, for some of which we prove nonconvexity. To do so, we solve the first instance of a generalized problem of moments in flag algebras.

In what follows we describe the organization of the paper.
In Section~\ref{sec:trees}, we start by defining the setting of leaf-labeled rooted binary trees introduced by Czabarka et al.\revise{~\cite{wagner2017}} and the notions of densities, inducibilities and tree-profiles in this context. Section~\ref{sec:flagAlgebra} defines the flag algebra of rooted binary trees and explains the main tools of Razborov's flag algebra theory we use in this setting. We describe in Section~\ref{sec:computation} how we can turn it into a computational approach based on semidefinite programming and polynomial optimization tools. This allows us to compute, in a systematic manner, bounds on the inducibilities of trees. We detail in Section~\ref{sec:results} our results: we recover all known inducibilities of small trees, improve the existing bounds on one \revisedelete{specific} tree \revise{of special interest}, and obtain more than $300$ new bounds as well as the first outer approximations of tree-profiles. Furthermore, we give some exact results on parts of the tree-profiles of two types of trees: the caterpillar tree of size $k$ for $k \in \{4,5,6\}$ and the even tree of size $6$. This profile for $k=4$ is shown in Figure~\ref{fig:profileIntro}\revise{, and explained in more detail in Section~\ref{ssec:ResultsTreeProfiles}}.
\begin{figure}[ht!]
	\centering
	\includegraphics[width=.7\linewidth]{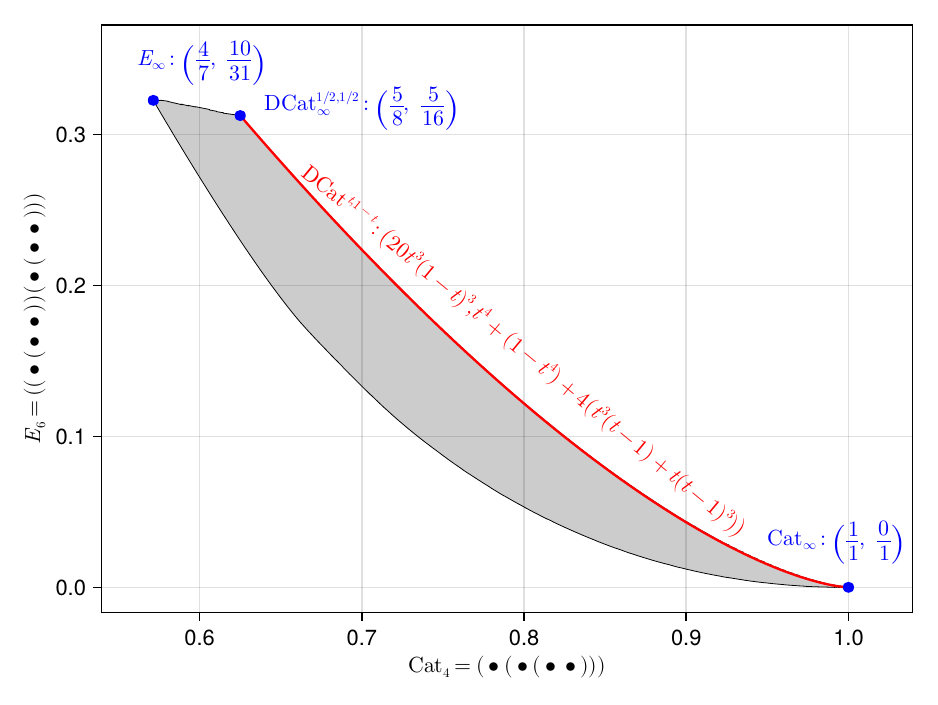}
	\caption{Tree-profile of the caterpillar tree of size $4$ and the even tree of size $6$\\ with three known points and our conjecture for the upper boundary}
	\label{fig:profileIntro}
\end{figure}
In particular, we prove one new characteristic point of the upper boundaries of these profiles using exact sum of squares certificates computed by our flag algebra-based framework. Finally, we are able to prove non-convexity of some tree-profiles, using rigorous (but not sharp) sum of squares certificates generated by our implementation of the flag algebra of rooted binary trees.

\section{Density and inducibility of rooted leaf-labeled binary trees}
\label{sec:trees}

\subsection{Rooted leaf-labeled binary trees and \revisedelete{their} subtrees}

In this article, we are working with rooted leaf-labeled binary trees. ``Leaf-labeled'' \revise{means} that only the leaves are considered ``vertices.'' \revise{This view on vertices significantly changes how we define subtrees (these trees are sometimes called ``topological trees''~\cite{DossouOlory2018a}). The vertices (in the usual graph sense) which are not leaves are called \emph{inner vertices}.} \revise{``Rooted'' means we consider trees in which one vertex, if there is one, has been designated as the tree's \emph{root}. The empty graph $\varnothing$ and the graph consisting of only one vertex are rooted trees. The vertex of the latter is a root and a leaf simultaneously.} ``Binary'' means that every \revise{inner} vertex has exactly two children.

\revise{In this paper, we want to investigate the densities of subtrees in large trees and asymptotic constructions. Typically, trees are sparse objects: As we consider larger and larger trees, the probability that a uniformly sampled induced subgraph of a fixed number of vertices is an independent set approaches one. In the leaf-labeled setting, this changes: trees are no longer sparse but dense objects. We will always obtain another tree if we sample a subtree coming from a subset of leaves, as described below. Accordingly, it is possible to define meaningful subtree densities in trees and their limits here.}

In this paper, every mention of a tree will now refer to a leaf-labeled rooted binary tree, except when explicitly stated otherwise.
For a tree $T$, we denote \revise{by} $\L(T)$ its set of leaves, and we define its size $|T| = |\L(T)|$ as its number of leaves. We consider the trees up to isomorphism; \revise{two trees $T$ and $S$ are isomorphic whenever there is a graph isomorphism between them which preserves the root. We denote this by $T \cong S$}. We call the set of all trees up to isomorphism $\trees$, and let $\trees_n\subseteq \trees$ denote the subset of trees with $n$ leaves. \revise{We denote by $\varnothing$ the empty tree and consider it to be of size $0$}. \revise{For $n \geq 1$,} the number of \revisedelete{such} trees in $\trees_n$ is the Wedderburn-Etherington number $W_n$: The first few numbers in the sequence are $1,1,1,2,3,6,11,23 \ldots$ \revise{(OEIS sequence A001190~\cite{Sloane})}. Figure~\ref{fig:treeExamples} depicts the one and only tree of size $3$, the two trees of size $4$, as well as the three trees of size $5$.
\begin{figure}[h!]
	\captionsetup[subfigure]{font=small}
	\centering
	\begin{minipage}{.25\textwidth}
		\centering
		\subcaptionbox{Tree of size $3$}[\textwidth]{%
			\centering
			\includegraphics[width=.37\linewidth]{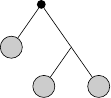}}
	\end{minipage}
	\begin{minipage}{.3\textwidth}
		\subcaptionbox{Trees of size $4$}[\textwidth]{%
			\begin{subfigure}{.45\textwidth}
				\centering
				\includegraphics[width=\linewidth]{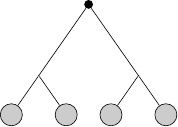}
			\end{subfigure}%
			\begin{subfigure}{.45\textwidth}
				\centering
				\includegraphics[width=.8\linewidth]{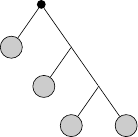}
			\end{subfigure}}
	\end{minipage}%
	\begin{minipage}{.4\textwidth}
		\hspace*{.5cm}
		\subcaptionbox{Trees of size $5$}[\textwidth]{%
			\begin{subfigure}{.3\textwidth}
				\centering
				\includegraphics[width=1.2\linewidth]{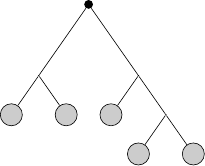}
			\end{subfigure}%
			\begin{subfigure}{.3\textwidth}
				\centering
				\includegraphics[width=1\linewidth]{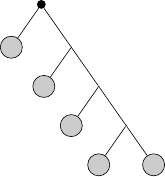}
			\end{subfigure}%
			\begin{subfigure}{.4\textwidth}
				\centering
				\includegraphics[width=.77\linewidth]{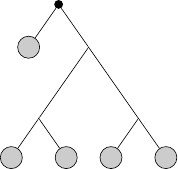}
			\end{subfigure}}
	\end{minipage}
	\caption{All trees of sizes $3$, $4$ and $5$.}
	\label{fig:treeExamples}
\end{figure}
These trees are essential to the topic of phylogenetics where they are extensively studied~\cite{Semple2009,Bienvenu2024}. \revisedelete{They are used to represent the evolutionary history of a set of species, and studying their properties is primordial to understand the evolution of species with respect to each other.}

\begin{figure}[h!]
	\captionsetup[subfigure]{font=footnotesize}
	\centering
	\begin{subfigure}{.17\textwidth}
		\resizebox{.9\textwidth}{!}{%
			\begin{forest}
				[, treeNodeRoot
						[, treeNodeInner
								[1, treeNodeLabeled]
								[2, treeNodeLabeled]
						]
						[, treeNodeInner
								[3, treeNodeLabeled]
								[, treeNodeInner
										[4, treeNodeLabeled]
										[5, treeNodeLabeled]
								]
						]
				]
			\end{forest}
		}
		\caption{}
		\label{fig:subtree_step1}
	\end{subfigure}
	\begin{subfigure}{.17\textwidth}
		\resizebox{.9\textwidth}{!}{%
			\begin{forest}
				[, treeNodeRoot
						[, treeNodeInner
								[1, treeNodeMarked]
								[2, treeNodeLabeled]
						]
						[, treeNodeInner
								[3, treeNodeMarked]
								[, treeNodeInner
										[4, treeNodeMarked]
										[5, treeNodeLabeled]
								]
						]
				]
			\end{forest}
		}
		\caption{}
		\label{fig:subtree_step2}
	\end{subfigure}%
	\centering
	\begin{subfigure}{.17\textwidth}
		\resizebox{.9\textwidth}{!}{%
			\begin{forest}
				[, treeNodeRoot
				[, treeNodeInner, edge=treeEdgeMarked
				[1, treeNodeMarked, edge=treeEdgeMarked]
				[2, treeNodeLabeled]
				]
				[, treeNodeInner, edge=treeEdgeMarked
				[3, treeNodeMarked, edge=treeEdgeMarked]
				[, treeNodeInner, edge=treeEdgeMarked
				[4, treeNodeMarked, edge=treeEdgeMarked]
				[5, treeNodeLabeled]
				]
				]
				]
			\end{forest}
		}
		\caption{}
		\label{fig:subtree_step3}
	\end{subfigure}
	\begin{subfigure}{.17\textwidth}
		\resizebox{.6\textwidth}{!}{%
			\begin{forest}
				[, treeNodeRoot
				[, treeNodeInner, edge=treeEdgeMarked
				[1, treeNodeMarked, edge=treeEdgeMarked]
				]
				[, treeNodeInner, edge=treeEdgeMarked
				[3, treeNodeMarked, edge=treeEdgeMarked]
				[, treeNodeInner, edge=treeEdgeMarked
				[4, treeNodeMarked, edge=treeEdgeMarked]
				]
				]
				]
			\end{forest}
		}
		\caption{}
		\label{fig:subtree_step4}
	\end{subfigure}
	\begin{subfigure}{.17\textwidth}
		\resizebox{.6\textwidth}{!}{%
			\begin{forest}
				[, treeNodeRoot
						[1, treeNodeLabeled]
						[, treeNodeInner
								[3, treeNodeLabeled]
								[4, treeNodeLabeled]
						]
				]
			\end{forest}
		}
		\caption{}
		\label{fig:subtree_step5}
	\end{subfigure}
	\caption{Subtree induced by the set of leaves $\{1,3,4\}$}
	\label{fig:subtreeExample}
\end{figure}
We use the notation $S \subseteq T$ to express that $S$ is a subtree of $T$.
We define the \emph{height} of a leaf as its distance (including inner vertices) to the root. For instance, in the tree in Figure~\ref{fig:leafHeight}, leaf $1$ has height $1$, leaf $2$ has height $2$, and all other leaves have height $4$.
\begin{figure}[h!]
	\centering
	\resizebox{.1\textwidth}{!}{%
		\begin{forest}
			[, treeNodeRoot
					[1, treeNodeLabeled]
					[, treeNodeInner
							[2, treeNodeLabeled]
							[, treeNodeInner
									[, treeNodeInner
											[, treeNode]
											[, treeNode]
									]
									[, treeNodeInner
											[, treeNode]
											[, treeNode]
									]
							]
					]
			]
		\end{forest}}
	\caption{Height of a leaf}
	\label{fig:leafHeight}
	\vspace{-.5cm}
\end{figure}
Some trees have particular structures that make them especially interesting to study. We call \revise{a} \emph{caterpillar tree} (or sometimes just caterpillar) of size $n$, denoted by $\cat{n}$, the tree where each inner node has as children a leaf and another caterpillar tree of size $n-1$, the caterpillar tree of size $1$ being the \revise{tree} consisting of one leaf \revise{(see Figure~\ref{fig:caterpillar})}. In other words, the inner vertices of the caterpillar tree form a path. Caterpillars form a very important set of trees with interesting and useful properties; their unrooted counterparts are also of great importance in \revise{the study of phylogenetic trees~\cite{alon2016PhyloTrees} and were crucial in the proof of $\mathcal{NP}$-completeness of the maximum quartet consistency problem~\cite{Steel1992}}.
\begin{figure}[h!]
	\centering
	\includegraphics[width=.13\linewidth]{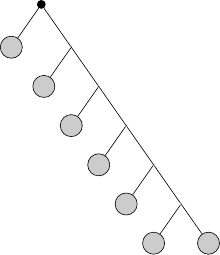}
	\caption{$\cat{7}$}
	\label{fig:caterpillar}
\end{figure}
We call \revise{an} \emph{even tree} of size $n$ and denote \revise{by} $E_n$ the tree \revise{where} the two \revise{subtrees} of each inner node have a size difference of at most $1$\revise{, as depicted in Figure~\ref{fig:eventrees}}. In particular, for every $k \geq 1$, the even tree of size $2^k$ is the complete binary tree of \revise{size} $k$.

\begin{figure}[h!]
	\captionsetup[subfigure]{font=footnotesize}
	\centering
	\begin{subfigure}{.24\textwidth}
		\centering
		\includegraphics[width=.5\linewidth]{Pictures/Tree-7.pdf}
		\caption{$E_5$}
	\end{subfigure}
	\begin{subfigure}{.24\textwidth}
		\centering
		\includegraphics[width=.5\linewidth]{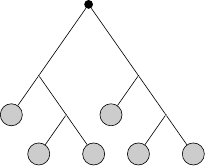}
		\caption{$E_6$}
	\end{subfigure}
	\begin{subfigure}{.24\textwidth}
		\centering
		\includegraphics[width=.5\linewidth]{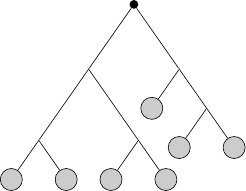}
		\caption{$E_7$}
	\end{subfigure}
	\begin{subfigure}{.24\textwidth}
		\centering
		\includegraphics[width=.75\linewidth]{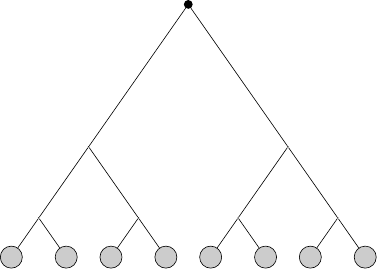}
		\caption{$E_8$}
	\end{subfigure}
	\caption{Even trees of sizes $5$ to $8$.}
	\label{fig:eventrees}
\end{figure}
\subsection{Inducibility of trees}

\paragraph{Inducibility of graphs.}
The notion of inducibility of graphs has been introduced in $1975$ by Pippenger and Golumbic~\cite{pippenger1975InducibilityGraphs}, who studied the maximum frequency with which a fixed graph on $k$ vertices can appear in another graph whose number of vertices goes to infinity. It is an extensively studied topic that has been approached in many different ways. In particular, Razborov's flag algebra theory has already been successfully applied to problems of inducibility in graphs. Indeed, it was used by Sperfeld in $2011$ to determine new bounds on the inducibility of several oriented graphs. In $2012$, Falgas-Ravry and Vaughan~\cite{FalgasRavry2012a} \revise{introduced} the software `Flagmatic' to determine exact inducibilities and upper bounds for several $3$-graphs up to $5$ vertices, as well as for oriented star graphs of size $3$ and $4$. In $2013$, Hirst~\cite{Hirst2013} \revise{determined} the inducibilities \revisedelete{in graphs} of $K_{1,1,2}$ and \revisedelete{of} the so-called paw graph.  In $2016$, Balogh, Hu, Lidický and Pfender~\cite{Balogh2016} \revise{proved}, using flag algebras, that the maximum density of the induced $5$-cycle $C_5$ is achieved by an iterated blow-up of \revise{the} 5-cycle. Following up on this, Lidický, Mattes and Pfender~\cite{Lidicky2023} \revise{determined} in $2022$ the maximum number of induced copies of $C_5$ in a graph on $n$ vertices, as well as all the \revisedelete{maximizer graphs} \revise{graphs in which this maximum is attained}.
\paragraph{Inducibility of leaf-labeled rooted binary trees.}
Czabarka et al.~\cite{wagner2017} \revise{adapted} the concept of inducibility in graphs to rooted leaf-labeled binary trees in the following way: We consider two trees $S$ and $T$, with $|S|=k$ and $|T|=n$ ($n \geq k$). $\mathcal{C}(S,T)$ is the number of \revisedelete{induced} subtrees \revise{of} $T$ that are isomorphic to $S$. The \emph{subtree density} of the tree $S$ in the tree $T$ is
\begin{align*}
	p(S;T)\coloneqq \frac{\mathcal{C}(S,T)}{\binom{n}{k}}.
\end{align*}
It is the proportion of copies of $S$ in $T$ over all subtrees of the same size of $T$. In other words, it is the probability of obtaining a subtree isomorphic to $S$ when picking uniformly at random a subtree of size $|S|$ in $T$. As such, we can reformulate the definition as
\begin{align*}
	p(S;T) & = \mathbb{P}[\revise{\subgraph{T}{\V}} \cong {S}] \in [0,1],
\end{align*}
where $\V$ is a \revise{uniformly} random subset of leaves of $T$ of size $|S|$, and $\revise{\subgraph{T}{\V}}$ is the subtree of $T$ induced by $\V$.

We now consider an increasing sequence of trees $\T=  (T_k)_{k\geq \revise{0}}$, i.e. \revise{such that} $(|T_k|)_{k \geq 0}$ is strictly increasing.
We define the subtree density of a tree $S$ in this increasing sequence of trees as
\begin{align*}
	\phi_{\T}(S) & \coloneqq \lim\limits_{n\rightarrow\infty} p(S;T_n)                                                            = \lim\limits_{n\rightarrow\infty} \mathbb{P}[\revise{\subgraph{T_n}{V_n}} \cong {S}] \in [0,1],
\end{align*}
where $\mathbf{V_n}$ is a random subset of leaves of $T_n$ of size $|S|$. Note that \revise{the vector of all densities $(p(S;T_n))_{S\in\trees}$ lies in $[0,1]^{\trees}$, which is compact by Tychonoff's theorem. Thus,} we can always find a subsequence such that all the densities converge. From now on, we always work (implicitly) with such \emph{converging increasing \revisedelete{sub}sequences}.


The \emph{inducibility} of the tree $S$ is then the maximum \revise{subtree density of $S$ in any} converging increasing sequences of trees:
\begin{align}
	\label{eq:defInducibility}
	I(S) \coloneqq \max\limits_{\T} \phi_{\T}(S).
\end{align}

This coincides with the definition of the inducibility of a tree in~\cite{wagner2017} as the limit superior of the maximum subtree density of $S$ in a tree:
\begin{align*}
	\limsup\limits_{|T| \rightarrow \infty} p(S;T)
	= \limsup\limits_{n \rightarrow \infty} \max\limits_{T:|T|=n} p(S;T)
\end{align*}
Indeed, by Tychonoff's theorem, the sequence of maximizers of $p(S;\cdot)$ of size $n$ has a converging subsequence. \revise{Thus, the definition of the inducibility of trees in~\cite{wagner2017}  matches our definition of $I(S)$, and the maximum in~\eqref{eq:defInducibility} is attained.}

\paragraph{Previous results on the inducibility of trees.}

In $2017$, Czabarka et al.~\cite{wagner2017} \revise{proved} that every binary tree has positive inducibility and that the only trees with inducibility $1$ are the caterpillar trees. They also \revise{obtained} the first inducibilities of binary trees: the complete binary tree of height $2$,
the even tree of size $r$; and \revise{provided} bounds \revise{for} other trees. They \revise{applied} these results to crossings in random tanglerams, which are pairs of binary trees of same size, whose leaves are joined by a perfect matching.
\revisedelete{These tanglegrams are related to the tree-pairs of phylogenetic trees studied in~\cite{alon2016PhyloTrees}}.

Several articles have also focused on the inducibility of $d$-ary trees, i.e., trees in which each of the non-leaf vertices of has between $2$ and $d$ children.
In $2018$, Dossou-Olory and Wagner~\cite{DossouOlory2018a} \revise{studied} the relation between inducibilities with fixed degree and with bounded degree; they also \revise{provided} a lower bound on the limit inferior on the density of the binary caterpillar. Additionally, they \revise{computed} in~\cite{DossouOlory2018} stronger bounds on the inducibility of small, but very challenging binary and ternary trees. In $2021$, they \revise{were} able to obtain more exact inducibilities and bounds on binary and $d$-ary trees, and \revise{obtained} results on the speed of convergence of the maximum density of a tree~\cite{DossouOlory2021}.
In $2020$, Czabarka et al.~\cite{wagner2020} proved the following theorem: for any $d$-ary tree $\revise{S}$ (i.e., each of the non-leaf vertices of $\revise{S}$ has between $2$ and $d$ children), $$\max\limits_{|\revise{T}| = n} p(\revise{S;T}) = I(\revise{S}) + \O(n^{-1}).$$  They showed as well that, for any binary tree $\revise{S}$, the inducibility of $\revise{S}$ in binary trees is equal to its inducibility in $d$-ary trees and in strictly $d$-ary trees (trees in which every vertex has exactly $0$ or $d$ children).

In $2016$, Alon, Naves and Sudakov~\cite{alon2016PhyloTrees} \revise{studied} the density of some patterns of size $4$ (``quartets'') in tree-pairs of trivalent trees, which are unrooted trees in which every non-leaf vertex has exactly three neighbors.  To do so, they \revise{used} flag algebra calculus that they adapt to this particular setting. They then \revise{solved} a semidefinite program to obtain bounds and various results on the density of such patterns. They \revise{obtained} as well some further results on (unrooted) caterpillar trees, by seeing them as permutations and using \revisedelete{flag algebras applied to permutations theory} \revise{the flag algebras of permutations}.

\subsection{Tree profiles}

The \emph{Tree-profile} of trees $T$ and $S$ is the set of pairs of densities $(\phi_{\T}(T),\phi_{\T}(S))$ that can be attained simultaneously from the same increasing sequence of trees $\T$.
\begin{align*}\profile (T,S) \coloneqq \{(\phi_{\T}(T),\phi_{\T}(S))\mid \T \text{ is an increasing sequence of trees}\} \subseteq [0,1]^2
\end{align*}

Graph profiles were first introduced in $1979$ by Erd\H{o}s, Lov\'asz, and Spencer~\cite{Erdoes1979}, who investigated how the densities of two graphs behaved with respect to each other. In particular, they showed that the profile of any $m$ connected graphs is full-dimensional in $\R^m$. However, graph profiles have proven \revise{to be} very challenging to study; they are not necessarily convex or even semialgebraic sets, and still very little is known about them.
A major breakthrough on the topic was obtained in $2008$ by Razborov~\cite{RAZBOROV2008}, who used his flag algebra theory to obtain the exact relation between the density of triangles in a graph and its edge density. In other words, he gave the first full description of a graph profile, the profile of the triangle and the edge graph. This problem was introduced by Tur\'an in $1941$~\cite{Zarankiewicz1955} and was until then still (mostly) unsolved. The case of edge versus the complete graph $K_4$ was then solved by Nikiforov in $2010$~\cite{Nikiforov2010}. The general case of $K_n$ and the edge was solved in $2016$ by Reiher~\cite{Reiher2016}, proving a conjecture of Lov\'asz and Simonovits~\cite{Lovasz1983}.

There is no known full profile of three or more connected graphs, though there exist partial results. In $2013$, Huang et al.~\cite{Huang2013} \revise{studied} profiles of four induced graphs with up to three vertices, and \revise{described} exactly the profile of 3-cliques and 3-anticliques on three vertices. They also \revise{gave} an exact description in the triangle-free case, with the use of flag algebras. Glebov et al.~\cite{Glebov2016} \revise{followed} on this work in $2016$ by determining all profiles of two induced graphs of size $3$.
Graph profiles of the edge and the $k$-edge path have been determined in $2016$ by Nagy for $n=4$~\cite{nagy2016}, notably using results from Ahlswede and Katona who proved the upper boundary in the case $k=2$ in $1978$~\cite{Ahlswede1978}.
In $2018$, Reiher and Wagner~\cite{Reiher2018} obtained an upper bound on the profile of the edge and the $k$-star for all integers $k\leq2$.
Recently, Cairncross and Muyabi~\cite{Cairncross2025} obtained exact profiles of ordered graphs, and the first results on profiles of colored graphs.
In $2022$, Blekherman et al.~\cite{Blekherman2022} computed and studied the tropicalization of graph and hypergraph profiles and, through it, exhibited limitations for the sums of squares method to prove graph density inequalities.

In $2016$, Bubeck and Linial~\cite{bubeck2016TreeProfiles} \revise{defined} the density of a tree $S$ in another tree $T$ as the number of copies of $S$ divided by the number of connected subgraphs of size $\abs{S}$ inducing a tree in $T$. In this context, they \revise{investigated} the limit sets of $k$-profiles of trees, that is, the profile of all trees on $k$ vertices. They \revise{showed} that these profiles are always convex. Following up on this paper, Chan et al.~\cite{Chan2022} proved further results on the densities and inducibilities of such trees.

\section{The flag algebra of rooted binary trees}
\label{sec:flagAlgebra}

Flag algebras, first introduced by Alexander Razborov in 2007 \cite{razborov2007}, are one of the most powerful and promising tools in extremal combinatorics. They provide a way to formulate extremal problems in graphs analogously to polynomial optimization problems. These can then be relaxed into semidefinite programs that can efficiently be solved by a computer, thus providing automated certificates for (often tight) bounds for such problems.
The similarities to polynomial optimization are not coincidental: recent developments have brought to light that sums of squares in flag algebras correspond to solving the limit of a sequence of polynomial optimization problems. The problems in the sequence grow in the number of variables (but not degree), counterbalanced by exhibiting more and more symmetries \cite{Raymond2017}. Interpreted this way, flag algebras can be recovered from the \emph{representation stability} of the problem, relating to \emph{dimension free descriptions} of the relevant cones, a more general concept recently investigated in the setting of \revise{convex} optimization in~\cite{Levin2023}.

Flag algebras have been used for a wide variety of problems, from densities in \\ graphs~\cite{FALGASRAVRY2012,Grzesik2012} to hypergraphs~\cite{Keevash2011}, permutations~\cite{BALOGH2014}, Ramsey numbers~\cite{Lidicky2021}, crossing numbers of graphs~\cite{Balogh2023}, and more.
Flag algebras have been used by Alon, Naves and Sudakov in~\cite{alon2016PhyloTrees} to study tree-pairs of trivalent phylogenetic trees. To the best of our knowledge, they have so far not been used on other types of trees, in particular not on rooted binary trees. A thorough overview of the theory of flag algebras is given in~\cite{oliveira2016FlagAlgebras}, another one in~\cite{brosch2022symmetry}, although more focused on non-induced flag algebras. We explain it here in the setting of trees.

\subsection{Model theoretic view}

Razborov introduced flag algebras in the setting of first-order model theory~\cite{razborov2007}. \revise{He works with a language $L$ containing predicate symbols (to be interpreted in models as \emph{relations} between tuples of elements), but no functions or distinguished elements. The theory $T$ is assumed to be universal, i.e., all its axioms can be written in the form
	\begin{equation*}
		\forall x_1\forall x_2\ldots \forall x_n\colon \phi(x_1,\ldots, x_n),
	\end{equation*}
	where $\phi$ is a quantifier free formula in $L$. Furthermore, he assumes existence of an infinite model of $T$, implying that models of each finite size exist.}
To avoid having to reprove \revise{Razborov's} results in our setting, we provide a model theoretic description of leaf-labeled trees. For this, \revise{we give in Theorem \ref{th:predicate} a theory with one three-argument predicate $\p(\cdot;\cdot, \cdot)$ and three axioms such that the models of the theory are our trees.}

\begin{theorem}
	\label{th:predicate}
	Let $n \geq 3$, and let $\p\subseteq [n]^3$ \revise{be a relation. We write $\p(i;j,k)$ for $(i,j,k)\in\p$.} The two following statements are equivalent:
	\begin{enumerate}[(i)]
		\item\label{it:pred1}There exists a unique tree $T$ such that, for any three distinct leaves $\revise{i,j,k}\in\L(T)$,
		      \begin{equation}
			      \p(i;j,k) \revise{ \Leftrightarrow } \text{ in the subtree of T induced by $\{i,j,k\}$, $i$ is at height $1$}\label{eq:defPredicate}
		      \end{equation}
		      \revise{No relations between non-distinct $i,j,k$ hold.}
		\item\label{it:pred2} $\p$ satisfies the following axioms \revise{for all distinct $i,j,k,\ell\in[n]$}:
		      \begin{enumerate}[(a)]
			      \item $\p(i;j,k) \revise{\ \Leftrightarrow \ } \p(i;k,j)$, \label{item:axiom1}
			      \item exactly one of $\p(i;j,k), \p(j;i,k), \p(k;i,j)$ is \revise{true},\label{item:axiom2}
			      \item $\p(i;j,k)\Rightarrow \p(\ell;j,k) \lor \p(i;j,\ell)$.\label{item:axiom3}
		      \end{enumerate}
	\end{enumerate}
\end{theorem}

\begin{proof}

	We start by proving that~\ref{it:pred1} implies~\ref{it:pred2}.
	Let us assume that $\p\revise{\subseteq} [n]^3$ satisfies~\ref{it:pred1}: $\p$ then represents a unique tree $T$ of size $n$. Axioms~\ref{item:axiom1} and~\ref{item:axiom2} follow directly from the definition of a tree. To prove axiom~\ref{item:axiom3}, let us consider \revise{distinct} $i,j,k,\ell \in [n]$. We \revisedelete{can} assume \revisedelete{without loss of generality} that $\p(i;j,k)$ \revise{is true}. We then consider the subtree induced by $\{\ell,j,k\}$ in $T$. If $\ell$ is at height $1$ in this subtree, $\p(\ell;j,k)$ \revise{is true} and~\ref{item:axiom3} holds. If not, we can assume without loss of generality that $j$ is at height $1$, i.e., $\p(j;\ell,k)$ \revise{holds}. Then, since $i$ is at height $1$ in the subtree induced by $\{i,j,k\}$, $i$ has to be at height $1$ as well in the subtree induced by $\{i,j,\ell\}$, hence $\p(i;j,\ell)$ \revise{is true} and~\ref{item:axiom3} holds as well. \\

	We now prove that~\ref{it:pred2} implies \ref{it:pred1}. To do so, we proceed by induction.
	For $n=3$, let $\p\revise{\subseteq}\{1, 2, 3\}^3$ fulfill the axioms of~\ref{it:pred2}. Then, there exists $i \in \{1,2,3\}$ such that $\p(i;j,k)$ and $\p(i;k,j)$ \revise{hold}, where $\{i,j,k\} = \{1,2,3\}$. By assumption, $\p$ is \revise{false on the remaining tuples in} $\{1, 2, 3\}^3$. Then, with $T$ the tree of size $3$ with leaf $i$ at height $1$ and leaves $j$ and $k$ at height $2$, represented in Figure~\ref{fig:treeIJK}, ~\ref{it:pred1} holds.

	\begin{figure}[ht!]
		\centering
		\resizebox{.12\textwidth}{!}{%
			\begin{forest}
				[, treeNodeRoot
						[\scriptsize$i$, treeNodeLabeled]
						[, treeNodeInner
								[j, treeNodeLabeled]
								[k, treeNodeLabeled]
						]
				]
			\end{forest}}
		\caption{The unique tree on $\{i,j,k\}$ with $\p(i;j,k)$ \revise{being true}}
		\label{fig:treeIJK}
	\end{figure}

	Let $n\geq 4$, and suppose that for any $3 \leq k \leq n$, any $\p\revise{\subseteq} [k]^3$ satisfying ~\ref{it:pred2} also satisfies \ref{it:pred1}. Let $\p\revise{\subseteq} [n+1]^3$ fulfill the axioms of~\ref{it:pred2}. We denote by $\p|_n$ be the restriction of $\p$ to $[n]^3$. By induction, $\p|_n$ uniquely describes a tree $T_n$ of size $n$. We will now show that $\p$ uniquely describes a tree $T_{n+1}$ obtained by attaching to $T_n$ the leaf $(n+1)$ in a position uniquely defined by $\p$.

	We define a graph $G$ with vertices $[n]=\{1, \ldots, n\}$ and an edge for every pair $\{i,j\}$ such that $\p(n+1;i,j)$ \revise{holds}. We denote by $T_1$ and $T_2$ the two subtrees joining at the root of $T_n$, and by $\L_1$ and $\L_2$ their respective sets of leaves: $\L_1 = \L(T_1)$ and $\L_2 = \L(T_2)$.
	Note that for every $i$ in $\L_1$ (resp. $\L_2$), for every $j,k$ in $\L_2$ (resp. $\L_1$), $\p(i;j,k)$ holds. Indeed, leaves from the same branch will have a common inner node higher than the root, but two leaves in separate branches will only join at the root node. By the same reasoning, $\p(i;j,k)$ \revise{holding} for some $\{i,j,k\}$ implies that $j$ and $k$ have to be in the same branch $T_1$ or $T_2$.

	We make the following claims:

	\begin{claim}
		At least one of $\revisedeletemath{\restrict{G}{T_1}}$ $\restrict{G}{\revise{\L_1}}$ and $\revisedeletemath{\restrict{G}{T_2}}$ $\restrict{G}{\revise{\L_2}}$ is a complete graph.
	\end{claim}
	\begin{proof}
		Let us assume that neither $\restrict{G}{\revise{\L_1}}$ nor $\restrict{G}{\revise{\L_2}}$ are complete. This means that there exist \revise{$i,j$ in $\L_1$} and \revise{$i',j'$ in $\L_2$} such that $\p(n+1;i,j)$ and $\p(n+1;i',j')$ \revise{are false}.
		Since $i$ is in $\L_1$ and both $i'$ and $j'$ are in $\L_2$, $\p(i;i',j')$ \revise{holds}. As $\p$ fulfills~\ref{item:axiom3}, this implies $\p(n+1;i',j') \revise{\lor} \p(i;i',n+1)$. Because we assumed $\p(n+1;i',j')$ \revise{does not hold},  $\p(i;i',n+1)$ \revise{is true}. Similarly, $\p(i';i,j)$ \revise{holds}, which implies $\p(n+1;i,j)\revise{\lor}\p(i';i,n+1)$, hence $\p(i';i,n+1)$ \revise{holds} by assumption. Finally, we obtain that both $\p(i;i',n+1)$ and $\p(i';i,n+1)$ \revise{hold}, which contradicts axiom~\ref{item:axiom2}.\qed
	\end{proof}

	\begin{claim}
		$\p(n+1;\cdot,\cdot)$ is constant on $\L_1 \times \L_2$, i.e.,
		for every $i,j \in \L_1$, for every $i',j' \in \L_2$, $\p(n+1;i,i') \revise{\Leftrightarrow} \p(n+1;j,j')$.
	\end{claim}
	\begin{proof}
		Let $i,j \in \L_1$ and $i',j' \in \L_2$.
		Let's assume, without loss of generality, that $\restrict{G}{\revise{\L_1}}$ is complete. Then, $\p(n+1;i,j)$ \revise{holds}. If $\p(n+1;i,i')$ \revise{is true}, then~\ref{item:axiom3} implies $\p(j';i,i') \revise{\lor} \p(n+1;i,j')$. Since $i$ is in $\L_1$ and $i',j'$ are in $\L_2$, $\p(i;i',j')$ \revise{is true}, hence \revise{also} $\p(n+1;i,j')$ \revise{and} $\p(n+1;j',i)$. This implies in turn $\p(j;j',i) \revise{\lor} \p(n+1;j',j)$. Since $(i,j)$ and $j'$ are in separate branches, we thus obtain that $\p(n+1;j',j)$, $\p(n+1;j,j')$, and $\p(n+1;i,i')$ \revise{all hold}.
		If $\p(n+1;i,i')$ \revise{does not hold}, \revise{as} $\p(j';i,i')$ \revise{is also false}, the contraposition of axiom~\ref{item:axiom3} implies \revise{neither} $\p(n+1;i,j')$ \revise{nor} $\p(n+1;j',i)$ \revise{hold}. These results, combined with the fact that $\p(i;j',j)$ \revise{is false}, imply in turn that \revise{none of} $\p(n+1;j',j)$, $\p(n+1;j,j')$, and $\p(n+1;i,i')$ \revise{can hold}.\qed
	\end{proof}

	The first claim allows us to split the problem in five cases, depending on the completeness of $G$ and its restrictions.
	We start by considering the case where both $\restrict{G}{\revise{\L_1}}$ and $\restrict{G}{\revise{\L_2}}$ are complete. This directly implies that the vertex $(n+1)$ has to be attached at height $1$ or $2$ of the tree. Three different configurations are possible, illustrated in Figure~\ref{fig:proofPredicate}. First, if $G$ is complete, $\p(n+1;i,j)$ \revise{holds} for all vertices $\{i,j\}$ of $T_n$. The tree with $n+1$ at height $1$, and the lowest common root of $T_1$ and $T_2$ at height $2$ then fulfills~\eqref{eq:defPredicate} for $\p$, and it is the only one (Fig.~\ref{fig:case3}). If $G$ is not complete, $(n+1)$ has to be attached at a height of exactly $2$ of the tree, that is, at the root of either $T_1$ or $T_2$. Since $\restrict{G}{\revise{\L_1}}$ and $\restrict{G}{\revise{\L_2}}$ are complete, $G$ misses an edge between \revisedelete{$T_1$} \revise{$\L_1$} and \revisedelete{$T_2$} \revise{$\L_2$}. There are then two possibilities. If there exists $i \in \L_1$ and $j\in \L_2$, such that $\p(i;n+1,j)$ \revise{is true}, $(n+1)$ has to be attached in $T_2$. Then, the tree where the root of $T_1$ is at height $1$, and $n+1$ and the root of $T_2$ are at height $2$ is the unique tree fulfilling~\eqref{eq:defPredicate} for $\p$ (Fig.~\ref{fig:case4}). Otherwise, there exists $i \in \L_1$ and $j\in \L_2$, such that $\p(j;n+1, i)$ \revise{holds}. Then the unique tree fulfilling~\eqref{eq:defPredicate} for $\p$ has the root of $T_2$ at height $1$, and $n+1$ and the root of $T_1$ at height $2$ (Fig.~\ref{fig:case5}).

	We now consider the case where $\restrict{G}{\revise{\L_1}}$ is not complete: there exist $(i,j)$ in $\L_1^2$ such that $\p(i;n+1,j)$ \revise{holds}. It implies that vertex $(n+1)$ has to be attached somewhere in $T_1$. We then repeat the process recursively in $T_1$: our second claim ensures that there will be no contradiction with the predicate on $T_2$. This provides us with one unique way of attaching $(n+1)$ to $T_1$ and thus to $T_n$. We proceed analogously if $\restrict{G}{\revise{\L_2}}$ is not complete.	\qed
\end{proof}

\vspace*{-.5cm}
\begin{figure}[ht!]
	\captionsetup[subfigure]{font=footnotesize}
	\centering
	\subcaptionbox{$G$ is complete \label{fig:case3}}[.28\textwidth]{
		\resizebox{.12\textwidth}{!}{%
			\begin{forest}
				[, treeNodeRoot
						[\scriptsize$n\!+\!1$, treeNodeLabeled]
						[, treeNodeInner
								[$T_1$, treeNodeLabeled]
								[$T_2$, treeNodeLabeled]
						]
				]
			\end{forest}}}
	\subcaptionbox{$\p(i;n+1, j)$ \revise{holds} \\ with $i \in \L_1$ and $j\in \L_2$\label{fig:case4}}[.28\textwidth]{
		\resizebox{.12\textwidth}{!}{%
			\begin{forest}
				[, treeNodeRoot
						[$T_1$, treeNodeLabeled]
						[, treeNodeInner
								[\scriptsize$n\!+\!1$, treeNodeLabeled]
								[$T_2$, treeNodeLabeled]
						]
				]
			\end{forest}}}
	\subcaptionbox{$\p(j;n+1, i)$ \revise{holds} \\ with $i \in \L_1$ and $j\in \L_2$\label{fig:case5}}[.28\textwidth]{
		\resizebox{.12\textwidth}{!}{%
			\begin{forest}
				[, treeNodeRoot
						[$T_2$, treeNodeLabeled]
						[, treeNodeInner
								[\scriptsize$n\!+\!1$, treeNodeLabeled]
								[$T_1$, treeNodeLabeled]
						]
				]
			\end{forest}}}
	\caption{Possible configurations with $\restrict{G}{\revise{\L_1}}$ and $\restrict{G}{\revise{\L_2}}$ complete}
	\label{fig:proofPredicate}
\end{figure}

This allows us to directly apply the theory of flag algebras introduced in~\cite{razborov2007} in our setting. \revise{Note that we can use this predicate to formulate the problem of the inducibility of a tree as an instance of polynomial optimization with infinitely many variables, analogously to~\cite{Raymond2017}.
	To do so, we would introduce binary variables $x_{ijk}=1\Leftrightarrow \p(i;j,k)$ for any $i,j,k$ in $\mathbb{N}$, and enforce the axioms of Theorem~\ref{th:predicate} as constraints using these variables. The objective function would need to be modelled as a limit analogously to the definition of subtree density in an increasing sequence of trees.}



\subsection{Types and flags}

Using the notations of Razborov \cite{razborov2007} (see also~\cite{oliveira2016FlagAlgebras}) adapted to our setting, a \emph{type} of size $k$ is a tree $\sigma$ of size $k$, with $\L(\sigma) =\{1, \ldots, k\}$. The empty type (of size $0$) \revise{is the empty tree, again} denoted by $\varnothing$.

An \emph{embedding} of a type $\sigma$ of size $k$ in a tree $T$ (with $k \leq |T|$) is an injective function $\theta\colon\{1, \ldots, k\} \rightarrow \L(T)$ that defines an isomorphism between $\sigma$ and the subtree of $T$ induced by $\im(\theta)$.

A $\sigma$\emph{-flag} $(T,\theta)$ is a tree $T$ together with an embedding of $\sigma$. Put simply, it is a tree whose set of leaves is partially labeled (\emph{flagged}), the labeled leaves inducing a subtree isomorphic to $\sigma$. In practice, we only specify the embedding if it is relevant to the context; the type can also be omitted when it is either obvious or irrelevant. \revise{We show two examples of flags of different types in Figure~\ref{fig:2treeflags}.} 

\begin{figure}[h!]
	\begin{subfigure}{.4\textwidth}
		\centering
		\vspace*{.4cm}
		\includegraphics[width=.22\linewidth]{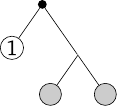}
		\vspace*{.5cm}
		\caption{A $\circled{1}$-tree-flag}
	\end{subfigure}%
	\centering
	\begin{subfigure}{.35\textwidth}
		\resizebox{.35\textwidth}{!}{%
			\centering
			\begin{forest}
				[, treeNodeRoot
						[, treeNodeInner,
							[1, treeNodeLabeled]
								[, treeNodeInner
										[, treeNode]
										[, treeNode]
								]
						]
						[, treeNodeInner
								[, treeNode]
								[, treeNodeInner
										[2, treeNodeLabeled]
										[, treeNode]
								]
						]
				]
			\end{forest}}
		\vspace*{.2cm}
		\centering
		\caption{A \begin{minipage}{0.1\textwidth}
				\centering
				\resizebox{\textwidth}{!}{%
					\begin{forest}
						[, treeNodeRoot
								[1, treeNodeLabeled]
								[2, treeNodeLabeled]
						]
					\end{forest}
				}
			\end{minipage}-tree-flag}
	\end{subfigure}
	\caption{Two tree-flags on different types}
	\label{fig:2treeflags}
\end{figure}
The automorphisms of the $\sigma$-tree-flag $(T, \theta)$ are the automorphisms of the unlabeled tree $T$ that leave the labeled leaves $\im\theta$ in place. We denote by $\Aut(T, \theta)$ the set of automorphisms of the tree-flag $(T, \theta)$.

Isomorphism between flags is the same as between trees, with the added condition that the labeling has to be preserved. More precisely, two $\mathbf{\sigma}$-tree-flags $(T, \theta)$ and $(S, \eta)$ are isomorphic if there is a graph isomorphism $\rho$ between $T$ and $S$ such that $\rho(\theta(i)) = \eta(i)$ for every leaf $i$ of the type $\sigma$. For instance, in Figure~\ref{fig:treeFlagsIso} are depicted three \begin{minipage}{0.06\textwidth}
	\centering
	\resizebox{\textwidth}{!}{%
		\begin{forest}
			[, treeNodeRoot
					[1, treeNodeLabeled]
					[, treeNodeInner
							[2, treeNodeLabeled]
							[3, treeNodeLabeled]
					]
			]
		\end{forest}
	}
\end{minipage}
\hspace*{-.1cm}-tree-flags of size $4$: \ref{fig:treeFlagIso1} and~\ref{fig:treeFlagIso2} are isomorphic to each other, but not to~\ref{fig:treeFlagIso3} or~\ref{fig:treeFlagIso4}.
\begin{figure}[h!]
	\begin{subfigure}{.2\textwidth}
		\resizebox{.45\textwidth}{!}{%
			\begin{forest}
				[, treeNodeRoot
						[1, treeNodeLabeled]
						[, treeNodeInner
								[2, treeNodeLabeled]
								[, treeNodeInner
										[3, treeNodeLabeled]
										[, treeNode]
								]
						]
				]
			\end{forest}
		}
		\centering
		\caption{}
		\label{fig:treeFlagIso1}
	\end{subfigure}%
	\centering
	\begin{subfigure}{.2\textwidth}
		\resizebox{.45\textwidth}{!}{%
			\begin{forest}
				[, treeNodeRoot
						[1, treeNodeLabeled]
						[, treeNodeInner
								[2, treeNodeLabeled]
								[, treeNodeInner
										[, treeNode]
										[3, treeNodeLabeled]
								]
						]
				]
			\end{forest}
		}\centering
		\caption{}
		\label{fig:treeFlagIso2}
	\end{subfigure}
	\begin{subfigure}{.2\textwidth}
		\resizebox{.45\textwidth}{!}{%
			\begin{forest}
				[, treeNodeRoot
						[1, treeNodeLabeled]
						[, treeNodeInner
								[, treeNode]
								[, treeNodeInner
										[2, treeNodeLabeled]
										[3, treeNodeLabeled]
								]
						]
				]
			\end{forest}
		}\centering
		\caption{}
		\label{fig:treeFlagIso3}
	\end{subfigure}
	\begin{subfigure}{.2\textwidth}
		\resizebox{.5\textwidth}{!}{%
			\begin{forest}
				[, treeNodeRoot
						[, treeNodeInner
								[1, treeNodeLabeled]
								[, treeNode]
						]
						[, treeNodeInner
								[2, treeNodeLabeled]
								[3, treeNodeLabeled]
						]
				]
			\end{forest}
		}\centering
		\caption{}
		\label{fig:treeFlagIso4}
	\end{subfigure}
	\caption{Four \begin{minipage}{0.06\textwidth}
			\centering
			\resizebox{\textwidth}{!}{%
				\begin{forest}
					[, treeNodeRoot
							[1, treeNodeLabeled]
							[, treeNodeInner
									[2, treeNodeLabeled]
									[3, treeNodeLabeled]
							]
					]
				\end{forest}
			}
		\end{minipage}\hspace{-.1cm}-tree-flags of size $4$}
	\label{fig:treeFlagsIso}
\end{figure}

The size $|(T,\theta)|$ of a flag we still define as the number of leaves $|\L(T)|$ of the underlying tree $T$. We denote by $\trees_n^\sigma$ the set of all $\sigma$-tree-flags of size $n$ (up to isomorphism), and by $\trees^\sigma$ the set of all $\sigma$-tree-flags (up to isomorphism). We say that two $\sigma$-tree-flags $(T,\theta)$ and $(S, \eta)$ are \emph{disjoint} if the sets of their leaves differ outside the embedding of $\sigma$, i.e.\ when $(\L(T)\setminus \im\theta)\cap (\L(S)\setminus \im\eta) = \emptyset$. Subflags we define analogously to subtrees by $\revise{\subgraph{(T,\theta)}{\V}} \coloneqq (\revise{\subgraph{T}{\V}}, \theta)$, under the condition that the subset of leaves contains the labeled leaves $\im\theta \subseteq \V\subseteq \L(T)$. \revise{We identify the set of trees $\trees$ with the set of $\varnothing$-tree-flags $\trees^\varnothing$.}


\subsection{Densities of flags}
\label{ssec:densitiesFlags}

We can extend the notion of density in trees to flags. We define the density of a $\sigma$-tree-flag $(S, \theta)$ in another $\sigma$-tree-flag $(T, \eta)$ as the probability of obtaining a tree-flag isomorphic to $(S, \theta)$ when choosing uniformly at random a $\mathbf{\sigma}$-subtree-flag of size $|S|$ in $T$. i.e.\ it is the probability
\begin{align*}
	p((S, \theta);(T, \eta)) & \coloneqq \mathbb{P}[\revise{\subgraph{(T,\eta)}{\V\cup\im\eta}} \cong (S,\theta)] \in [0,1],
\end{align*}
where $\V$ is a random subset of $\L(T) \setminus \im(\eta)$ of size $|S|-|\sigma|$.

Densities of flags follow the \emph{chain rule} in Lemma~\ref{lemma:chain_rule}.
\begin{lemma}[Chain rule for flags~\cite{razborov2007}]\label{lemma:chain_rule}
	For $S,T\in\trees^\sigma$ and an integer $n$ such that $\abs{S} \leq n \leq \abs{T}$ we have
	\begin{align}
		p(S; T) & = \sum\limits_{S' \in \trees^{\sigma}_n} p(S; S')p(S'; T).
	\end{align}
\end{lemma}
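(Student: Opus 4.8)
The plan is to prove the chain rule by a double-counting argument that reduces it to a classical binomial identity. Fix representatives $(T,\eta)$ and $(S,\theta)$ of the $\sigma$-flags in question, write $\abs{\sigma}=\abs{\im\eta}=\abs{\im\theta}$, and for any two $\sigma$-flags $R\subseteq R'$ let $N(R;R')$ denote the number of sets of $\abs{R}-\abs{\sigma}$ unlabeled leaves of $R'$ whose union with the labeled leaves induces a subflag isomorphic to $R$. By the definition of flag density above, $p(R;R') = N(R;R')\big/\binom{\abs{R'}-\abs{\sigma}}{\abs{R}-\abs{\sigma}}$. The single observation doing the real work is that $N(R;R')$ is an isomorphism invariant of the pair $(R,R')$: any isomorphism of $\sigma$-flags carries admissible subsets bijectively to admissible subsets. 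In particular, once we fix an embedded subflag of $T$ of size $n$, the number of its further size-$\abs{S}$ subflags isomorphic to $S$ equals $N(S;S')$, where $S'\in\trees^{\sigma}_n$ is its isomorphism type.

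Next I would count in two ways the collection of nested pairs $U\subseteq W\subseteq \L(T)\setminus\im\eta$ with $\abs{W}=n-\abs{\sigma}$, $\abs{U}=\abs{S}-\abs{\sigma}$, and $\restrict{(T,\eta)}{U\cup\im\eta}\cong(S,\theta)$. Grouping by the inner set $U$: there are $N(S;T)$ admissible choices of $U$, and each extends to $W$ in $\binom{\abs{T}-\abs{S}}{n-\abs{S}}$ ways. Grouping by the outer set $W$ according to the isomorphism type $S'$ of the subflag it induces: there are $N(S';T)$ sets $W$ of type $S'$, and for each exactly $N(S;S')$ admissible inner sets $U$ by the invariance above. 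Equating the two counts gives
\begin{equation*}
	N(S;T)\binom{\abs{T}-\abs{S}}{n-\abs{S}} = \sum_{S'\in\trees^{\sigma}_n} N(S';T)\,N(S;S').
\end{equation*}

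Finally, I would substitute $N(S;T)=p(S;T)\binom{\abs{T}-\abs{\sigma}}{\abs{S}-\abs{\sigma}}$, $N(S';T)=p(S';T)\binom{\abs{T}-\abs{\sigma}}{n-\abs{\sigma}}$ and $N(S;S')=p(S;S')\binom{n-\abs{\sigma}}{\abs{S}-\abs{\sigma}}$ (using $\abs{S'}=n$), pull the binomial coefficients that no longer depend on $S'$ outside the sum, and cancel. What remains to verify is the ``subset of a subset'' identity $\binom{a}{c}\binom{a-c}{b-c}=\binom{a}{b}\binom{b}{c}$ with $a=\abs{T}-\abs{\sigma}$, $b=n-\abs{\sigma}$, $c=\abs{S}-\abs{\sigma}$, which is immediate upon expanding both sides into factorials. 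I expect the whole argument to be routine; the only point needing genuine care is the isomorphism-invariance of $N(R;R')$, together with the bookkeeping that forces the labeled leaves $\im\eta$ to lie in every sampled subset at each stage. A slicker equivalent is a two-stage sampling argument: a uniformly random $(\abs{S}-\abs{\sigma})$-subset of $\L(T)\setminus\im\eta$ has the same law as one obtained by first drawing a uniform $(n-\abs{\sigma})$-subset $W$ and then a uniform $(\abs{S}-\abs{\sigma})$-subset of $W$, and conditioning on the isomorphism type of the subflag induced by $W$ yields the sum directly via the law of total probability.
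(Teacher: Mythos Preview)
Your proof is correct. The paper does not actually prove this lemma; it simply states it with a citation to Razborov~\cite{razborov2007}, so there is no in-paper argument to compare against. Your double-counting of nested leaf subsets (equivalently, the two-stage sampling formulation you sketch at the end) is exactly the standard proof, and all the bookkeeping---the isomorphism invariance of $N(R;R')$, the requirement that $\im\eta$ sit inside every sampled set, and the binomial identity $\binom{a}{c}\binom{a-c}{b-c}=\binom{a}{b}\binom{b}{c}$---is handled correctly.
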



Analogously to the unlabeled case, we define the density of a flag $(S,\theta)$ in an increasing sequence of flags $\T=  (T_\revise{n}, \eta_\revise{n})_{\revise{n}\geq 1}$ ($(|T_n|)_{n \geq 0}$ is strictly increasing) as
\begin{align*}
	\reviseSecond{\psi}_{\T}(S, \theta) & = \lim\limits_{n\rightarrow\infty} p((S, \theta);(T_n, \eta_n))\in [0,1].
\end{align*}
\reviseSecond{To differentiate flag densities from (unlabeled) densities, we denote them by $\psi$ instead of $\phi$. If $\T$ is a sequence of flags, but we write $\phi_\T$, then we use $\T$ for the corresponding sequence of (unlabeled) trees $(T_n)_{n\geq 1}$.}
As before, we assume the limits exist; we are working with \emph{converging sequences of flags}, again obtainable by choosing appropriate subsequences of flags.

We can linearly extend densities of tree-flags to formal (real) linear combinations of tree-flags
, which we call \emph{quantum trees}, following Lov\'asz's quantum graphs~\cite{Lovasz2012}. \revise{We denote by $\R\trees^\sigma$ the space of all quantum trees, i.e., the space of all formal real linear combinations of tree flags.}

\subsection{Products of flags}

Our goal is now to understand products of densities of flags. Given two $\sigma$-flags $S_1, S_2\in \trees^\sigma$ we want to find a quantum tree $S_1\cdot S_2\in\R\trees^\sigma$ such that
\begin{equation*}
	\reviseSecond{\psi}_{\T}(S_1)\reviseSecond{\psi}_{\T}(S_2) = \reviseSecond{\psi}_{\T}(S_1\cdot S_2)
\end{equation*}
for all increasing sequences of flags $\T$.

To compute these, we need to define the \emph{sunflower density} of two flags $S_1, S_2\in \trees^\sigma$ in another flag $T=(t,\theta)\in\trees^\sigma$. It is given by
\begin{align*}
	p(S_1,S_2; T) \coloneqq \mathbb{P}[\revise{\subgraph{T}{\V_1}} \cong S_1 \wedge \revise{\subgraph{T}{\V_2}} \cong S_2] \in [0,1],
\end{align*}
where $\V_1, \V_2\in \mathcal{P}(\L(T))$ is a uniformly random \emph{sunflower} with center $\V_1\cap\V_2=\im\theta$ and petals of sizes $\abs{\V_i\setminus \im \theta} = \abs{S_i}-\abs{\sigma}$. It is the probability that $S_1$ and $S_2$ fit in $T$ simultaneously, matching only on the type $\sigma$.

The chain rule in Lemma~\ref{lemma:chain_rule} can be generalized to compute the sunflower density of several tree-flags. For every $\abs{S_1}+ \abs{S_2} - \abs{\sigma} \leq n \leq \abs{T}$, the identity
\begin{align}\label{eq:chain_rule_generalized}
	p(S_1, S_2; T) & = \sum\limits_{\revise{S'} \in \trees^{\sigma}_n} p(S_1, S_2; \revise{S'})p(\revise{S'}; T)
\end{align}
holds~(Lemma 2.2 in \cite{razborov2007}).

The sunflower densities of flags exactly describe the products of densities in the limit, captured by the following Theorem~\ref{th:productlim} (adapted from Theorem $2$ in~\cite{oliveira2016FlagAlgebras}, see also Lemma 2.3 in~\cite{razborov2007}):

\begin{theorem}
	\label{th:productlim}
	If $S_1$ and $S_2$ are fixed $\sigma$-tree-flags, then for any $\sigma$-tree-flag $T$ such that $S_1, S_2$ fit in $T$,
	\begin{equation*}
		p(S_1,S_2;T) = p(S_1;T)p(S_2;T) + \O\left(1/\abs{T}\right).
	\end{equation*}
\end{theorem}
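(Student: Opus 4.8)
The plan is to compute both sides of the claimed identity as probabilities over random leaf subsets of $T$ and to show that they differ only in lower-order terms coming from ``collisions.'' Fix the $\sigma$-tree-flag $T = (t,\eta)$ of size $N = |T|$, and write $|S_1| = k_1$, $|S_2| = k_2$, $|\sigma| = m$. Recall from Section~\ref{ssec:densitiesFlags} that $p(S_1,S_2;T)$ is the probability that an ordered pair of petals $(\V_1\setminus\im\eta,\V_2\setminus\im\eta)$, of sizes $k_1-m$ and $k_2-m$, drawn \emph{disjointly and uniformly} from $\L(T)\setminus\im\eta$, induce (together with $\im\eta$) flags isomorphic to $S_1$ and $S_2$ respectively. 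On the other hand, $p(S_1;T)p(S_2;T)$ is the probability of the same event when the two petals are drawn \emph{independently} (each uniform over the $(k_i-m)$-subsets of $\L(T)\setminus\im\eta$), so they may overlap.

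First I would set up a coupling of the two sampling procedures: draw the independent pair $(\V_1,\V_2)$; if the petals happen to be disjoint, output it, otherwise resample. The probability that two independent uniform petals of sizes $k_1-m,k_2-m$ inside a set of size $N-m$ are \emph{not} disjoint is, by a union bound over the $(k_1-m)(k_2-m)$ potential shared leaves, at most $(k_1-m)(k_2-m)/(N-m) = \O(1/N) = \O(1/|T|)$, since $k_1,k_2,m$ are fixed. Conditioned on disjointness, the independent pair is exactly uniform among disjoint pairs, i.e. it has the sunflower distribution. Hence the event ``$T|_{\V_1}\cong S_1 \wedge T|_{\V_2}\cong S_2$'' has probability under the independent model within $\O(1/|T|)$ of its probability under the sunflower model — the difference is bounded by the probability of the (low-probability) resampling event. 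This yields
\begin{equation*}
	\bigl| p(S_1,S_2;T) - p(S_1;T)p(S_2;T) \bigr| \;\leq\; \frac{(k_1-m)(k_2-m)}{N-m} \;=\; \O\!\left(1/|T|\right),
\end{equation*}
which is the assertion.

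The only genuinely delicate point is making sure ``$S_1,S_2$ fit in $T$'' is used correctly so that all the probability spaces are nonempty and the conditioning is legitimate: we need $N-m \geq k_1+k_2-2m$ for a disjoint pair to exist (this is exactly the hypothesis that $S_1,S_2$ fit in $T$ simultaneously, as in~\eqref{eq:chain_rule_generalized}), and $N-m \geq \max(k_1-m,k_2-m)$ for each marginal to make sense. Under these hypotheses the resampling coupling is well-defined and the union bound above is valid; there is nothing else to check. (One may alternatively phrase the same argument via the generalized chain rule~\eqref{eq:chain_rule_generalized} with $n = k_1+k_2-m$, writing $p(S_1,S_2;T) = \sum_{\tilde T} p(S_1,S_2;\tilde T)\,p(\tilde T;T)$ and $p(S_i;T) = \sum p(S_i;\tilde T)p(\tilde T;T)$, then comparing $p(S_1,S_2;\tilde T)$ with $p(S_1;\tilde T)p(S_2;\tilde T)$ on each fixed $\tilde T$ of bounded size $n$; but the direct coupling is cleaner and gives the explicit constant.) I expect the union-bound collision estimate to be the ``main'' step, though it is entirely routine; the rest is bookkeeping.
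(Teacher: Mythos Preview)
Your proof is correct and follows exactly the approach the paper sketches: the paper does not give a detailed argument but only the one-line remark ``Indeed, the probability that two (independently) random $\sigma$-subtree-flags in $T$ of sizes $\abs{S_1}$ and $\abs{S_2}$ are disjoint approaches $1$ as $T$ gets larger,'' and your coupling/rejection-sampling argument with the union-bound collision estimate is precisely the standard way to make that sentence rigorous (and indeed recovers the explicit constant).
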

Indeed, the probability that two (independently) random $\sigma$-subtree-flags in $T$ of sizes $\abs{S_1}$ and $\abs{S_2}$ are disjoint approaches $1$ as $T$ gets larger.


Theorem~\ref{th:productlim} tells us that the product $\reviseSecond{\psi}_{\T}(S_1)\reviseSecond{\psi}_{\T}(S_2)$ of densities of flags in an increasing sequence $\T$ behaves, asymptotically, like the sunflower density of $S_1$ and $S_2$. And the chain rule~\eqref{eq:chain_rule_generalized} tells us how to compute it:
\begin{align*}
	\reviseSecond{\psi}_{\T}(S_1)\reviseSecond{\psi}_{\T}(S_2) =  \sum\limits_{T \in \trees_n^\sigma} p(S_1,S_2;T)\reviseSecond{\psi}_{\T}(T)
\end{align*}
for any $n\geq \abs{S_1} + \abs{S_2}- \abs{\sigma}$.

Thus, we find a natural way to define the \emph{gluing product} of tree-flags as
\begin{align*}
	S_1\cdot S_2 \coloneqq \sum\limits_{T \in \trees_n^\sigma} p(S_1,S_2;T) T \in \R\trees^\sigma.
\end{align*}
Note that\revise{, when $n=\abs{S_1} + \abs{S_2}- \abs{\sigma}$,} every tree-flag $T$ appearing in the sum with nonzero coefficients can be obtained by ``gluing'' the leaves of $S_1$ and $S_2$ with same labels on top of each other, and sending the unlabeled leaves to distinct leaves of $T$.

This product is not yet entirely well-defined; it depends on the choice of $n$. To solve this, we define the \emph{flag algebra of trees of type $\sigma$} as the quotient
\begin{equation*}
	\A^\sigma \coloneqq \sfrac{\R\trees^\sigma}{\mathcal{K}^\sigma},
\end{equation*}
where $\mathcal{K}^\sigma$ is the linear span of all elements
\begin{equation}\label{eq:quotient}
	S - \sum\limits_{S' \in \trees^{\sigma}_n} p(S; S')S',
\end{equation}
where $S\in\trees^\sigma$ and $n\geq \abs{S}$. The elements of $\mathcal{\sigma}$ are exactly the zeroes implied by the chain rule Lemma~\ref{lemma:chain_rule}. Quotienting them out turns $\A^\sigma$ into an algebra and the product $S_1\cdot S_2$ well-defined (Lemma 2.4 in \cite{razborov2007}).
\reviseSecond{
	Observe that the density functions associated with flags and trees define
	\emph{positive homomorphisms} from the corresponding flag algebras to $\mathbb{R}$.
	Specifically, for every convergent sequence $\T$ of $\sigma$-flags,
	\begin{equation*}
		\psi_\T \in \mathrm{Hom}^+(\A^\sigma, \mathbb{R})
		\quad \text{and} \quad
		\phi_\T \in \mathrm{Hom}^+(\A^\varnothing, \mathbb{R}),
	\end{equation*}
	because the density operators are multiplicative and nonnegative on $\trees^\sigma$.
}

We say that a quantum tree $T\in\A^\sigma$ is \emph{nonnegative}, denoted by $T\geq 0$, if
\begin{equation*}
	\reviseSecond{\psi}_{\T}(T) \geq 0
\end{equation*}
for all increasing sequences $\T$.





To illustrate this, we express the square of the following $\circled{1}$-tree-flag of size $3$ \begin{minipage}{0.05\textwidth}
	\centering
	\includegraphics[width=\linewidth]{Pictures/LatexPic-21.pdf}
\end{minipage} \revisedelete{in function of $\circled{1}$-tree-flags of size $5$ (the minimum size possible)}. We can construct the tree-flags that will appear in the product by the gluing operation described above, and obtain the three following tree-flags:
\begin{minipage}{0.06\textwidth}
	\centering
	\includegraphics[width=\linewidth]{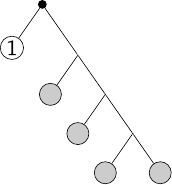}
\end{minipage},
\begin{minipage}{0.06\textwidth}
	\centering
	\includegraphics[width=\linewidth]{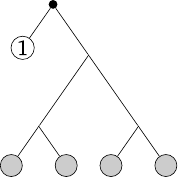}
\end{minipage}
and \begin{minipage}{0.06\textwidth}
	\centering
	\includegraphics[width=\linewidth]{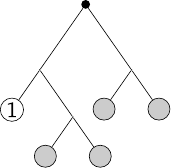}
\end{minipage}.

\vspace*{.2cm}

It is straightforward to check that they all can simultaneously contain $2$ disjoint subtree-flags isomorphic to \begin{minipage}{0.05\textwidth}
	\centering
	\includegraphics[width=\linewidth]{Pictures/LatexPic-21.pdf}
\end{minipage}, and that no other $\circled{1}$-tree-flag of size $5$ does.

\vspace{.3cm}
To determine the coefficient of each tree-flag in the product, we compute the probability of two randomly chosen disjoint $\circled{1}$-subtree-flags of size $3$ in this tree-flag to be both isomorphic to  \begin{minipage}{0.05\textwidth}
	\centering
	\includegraphics[width=\linewidth]{Pictures/LatexPic-21.pdf}
\end{minipage}.

All the possible ways (up to isomorphism) of obtaining two disjoint subtree-flags isomorphic to \ \begin{minipage}{0.05\textwidth}
	\centering
	\includegraphics[width=\linewidth]{Pictures/LatexPic-21.pdf}
\end{minipage} in each tree present in the product are represented in Figure~\ref{fig:treeProducts}. In each figure one subtree is represented in red and the other one in blue.

\begin{figure}[h!]
	\captionsetup[subfigure]{justification=centering}
	\centering
	\begin{subfigure}{.18\textwidth}
		\hspace*{.4cm}
		\resizebox{.55\textwidth}{!}{%
			\begin{forest}
				[, treeNodeRoot
				[1, treeNodeLabeled, edge=treeEdgeDouble]
				[, treeNodeInner, edge=treeEdgeDouble
				[, treeNodeMarked, edge=treeEdgeMarked]
				[, treeNodeInner, edge=treeEdgeDouble
				[, treeNodeMarked, edge=treeEdgeMarked]
				[, treeNodeInner, edge=treeEdgeDouble
				[, treeNodeBlue, edge=treeEdgeBlue]
				[, treeNodeBlue, edge=treeEdgeBlue]
				]
				]
				]
				]
			\end{forest}}
		\caption{}
		\label{fig:treeProd1}
	\end{subfigure}
	\centering
	\begin{subfigure}{.18\textwidth}
		\hspace*{.3cm}
		\resizebox{.55\textwidth}{!}{%
			\begin{forest}
				[, treeNodeRoot
				[1, treeNodeLabeled, edge=treeEdgeDouble]
				[, treeNodeInner, edge=treeEdgeDouble
				[, treeNodeMarked, edge=treeEdgeMarked]
				[, treeNodeInner, edge=treeEdgeDouble
				[, treeNodeBlue, edge=treeEdgeBlue]
				[, treeNodeInner, edge=treeEdgeDouble
				[, treeNodeMarked, edge=treeEdgeMarked]
				[, treeNodeBlue, edge=treeEdgeBlue]
				]
				]
				]
				]
			\end{forest}}
		\caption{}
		\label{fig:treeProd2}
	\end{subfigure}
	\centering
	\begin{subfigure}{.18\textwidth}
		\hspace*{.4cm}
		\resizebox{.55\textwidth}{!}{%
			\begin{forest}
				[, treeNodeRoot
				[1, treeNodeLabeled, edge=treeEdgeDouble]
				[, treeNodeInner, edge=treeEdgeDouble
				[, treeNodeInner, edge=treeEdgeMarked
				[, treeNodeMarked, edge=treeEdgeMarked]
				[, treeNodeMarked, edge=treeEdgeMarked]
				]
				[, treeNodeInner, edge=treeEdgeBlue
				[, treeNodeBlue, edge=treeEdgeBlue]
				[, treeNodeBlue, edge=treeEdgeBlue]
				]
				]
				]
			\end{forest}}
		\caption{}
		\label{fig:treeProd3}
	\end{subfigure}
	\begin{subfigure}{.18\textwidth}
		\hspace*{.4cm}
		\resizebox{.55\textwidth}{!}{%
			\begin{forest}
				[, treeNodeRoot
				[1, treeNodeLabeled, edge=treeEdgeDouble]
				[, treeNodeInner, edge=treeEdgeDouble
				[, treeNodeInner, edge=treeEdgeDouble
				[, treeNodeMarked, edge=treeEdgeMarked]
				[, treeNodeBlue, edge=treeEdgeBlue]
				]
				[, treeNodeInner, edge=treeEdgeDouble
				[, treeNodeMarked, edge=treeEdgeMarked]
				[, treeNodeBlue, edge=treeEdgeBlue]
				]
				]
				]
			\end{forest}}
		\caption{}
		\label{fig:treeProd4}
	\end{subfigure}
	\begin{subfigure}{.18\textwidth}
		\hspace*{.4cm}
		\resizebox{.55\textwidth}{!}{%
			\begin{forest}
				[, treeNodeRoot
				[, treeNodeInner, edge=treeEdgeBlue
				[1, treeNodeLabeled, edge=treeEdgeDouble]
				[, treeNodeInner, edge=treeEdgeMarked
				[, treeNodeMarked, edge=treeEdgeMarked]
				[, treeNodeMarked, edge=treeEdgeMarked]
				]
				]
				[, treeNodeInner, edge=treeEdgeBlue
				[, treeNodeBlue, edge=treeEdgeBlue]
				[, treeNodeBlue, edge=treeEdgeBlue]
				]
				]
			\end{forest}}
		\caption{}
		\label{fig:treeProd5}
	\end{subfigure}
	\caption{Every way (up to isomorphism) to obtain two disjoint subtree-flags\\ isomorphic to \begin{minipage}{0.04\textwidth}
			\centering
			\includegraphics[width=\linewidth]{Pictures/LatexPic-21.pdf}
		\end{minipage} in trees of size $5$ }
	\label{fig:treeProducts}
\end{figure}

We then see that, for \begin{minipage}{0.06\textwidth}
	\centering
	\includegraphics[width=\linewidth]{Pictures/LatexPic-22.pdf}
\end{minipage}
and\begin{minipage}{0.06\textwidth}
	\centering
	\includegraphics[width=\linewidth]{Pictures/LatexPic-23.pdf}
\end{minipage},
this probability is equal to $1$: any $\circled{1}$-subtree-flag of size $3$ in these tree-flags is isomorphic to \begin{minipage}{0.05\textwidth}
	\centering
	\includegraphics[width=\linewidth]{Pictures/LatexPic-21.pdf}
\end{minipage}.
For \begin{minipage}{0.06\textwidth}
	\centering
	\includegraphics[width=\linewidth]{Pictures/LatexPic-24.pdf}
\end{minipage},
this probability is equal to~$\frac{1}{3}$. Indeed, the only  $\circled{1}$-subtree-flags isomorphic to \begin{minipage}{0.05\textwidth}
	\centering
	\includegraphics[width=\linewidth]{Pictures/LatexPic-21.pdf}
\end{minipage} in this tree are induced by pairs of leaves of the same height, who account for $\frac{1}{3}$ of all the pairs of leaves.

Finally, we obtain the following expression for the square of \ \begin{minipage}{0.05\textwidth}
	\centering
	\includegraphics[width=\linewidth]{Pictures/LatexPic-21.pdf}
\end{minipage}:

\begin{equation*}
	\begin{minipage}{0.2\textwidth}
		\centering
		\includegraphics[width=.35\linewidth]{Pictures/LatexPic-21.pdf}
	\end{minipage}
	\hspace{-0.06\textwidth}
	\cdot
	\hspace{-.04\textwidth}
	\begin{minipage}{0.2\textwidth}
		\centering
		\includegraphics[width=.35\linewidth]{Pictures/LatexPic-21.pdf}
	\end{minipage}
	\hspace{-0.05\textwidth}
	=
	\begin{minipage}{0.13\textwidth}
		\centering
		\includegraphics[width=.75\linewidth]{Pictures/LatexPic-22.pdf}
	\end{minipage}
	+ \hspace{-0.025\textwidth}
	\begin{minipage}{0.17\textwidth}
		\centering
		\includegraphics[width=.65\linewidth]{Pictures/LatexPic-23.pdf}
	\end{minipage}
	\hspace{-.05\textwidth}
	+ \ \frac{1}{3}
	\hspace{-.01\textwidth}
	\begin{minipage}{0.17\textwidth}
		\centering
		\includegraphics[width=.65\linewidth]{Pictures/LatexPic-24.pdf}
	\end{minipage}
\end{equation*}






\subsection{Downward operator }

To go back from the space of linear combinations of tree-flags to the set of quantum trees, we use the \emph{downward operator} adapted to rooted binary trees.
The downward operator~$\llbracket \cdot\rrbracket$ averages flags over all choices of labels: it unlabels the flags. For a $\sigma$-tree-flag $T$, we denote \revise{by} $T|_0$ the tree obtained simply by forgetting the labels of $T$. We then have
\begin{align*}\llbracket T\rrbracket_\sigma= q_\sigma(T)\cdot T|_0,
\end{align*} where \begin{align*}
	q_\sigma(T) & = \frac{(n-\abs{\sigma})!}{n!}\cdot \frac{\abs{\Aut(T|_0)}}{\abs{\Aut(T)}}
\end{align*}
is a normalizing factor equal to the probability that a random injective map $\theta:V(\sigma) \rightarrow V(T)$ is such that $(T|_0,\theta)$ is a $\sigma$-tree-flag isomorphic to $T$. We can extend this operator to quantum graphs: this then provides us with a linear map from $\A^{\sigma}$ to the space of (unlabeled) \revise{quantum} trees $\revise{\R\trees}$. This is exceedingly useful, as Theorem~\ref{th:downwards} makes it possible to prove statements in $\A^\varnothing$ using true statements in $\A^\sigma$.

\begin{theorem}[Theorem 3.1 in \cite{razborov2007}]\label{th:downwards}
	Let $T\in\A^\sigma$ be a nonnegative quantum tree, i.e.\ $\reviseSecond{\psi}_\T(T)\geq 0$ for all increasing sequences $\T$. Then $\llbracket T\rrbracket \geq 0$ is also nonnegative.
\end{theorem}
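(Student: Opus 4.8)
The plan is to reduce the statement to a compactness argument over increasing sequences of $\sigma$-tree-flags, the bridge being an averaging identity for the downward operator. First I would establish that for every quantum tree $f\in\A^\sigma$ and every tree $M$ with $\abs{M}\geq\abs{f}$,
\begin{equation*}
	p(\llbracket f\rrbracket_\sigma; M) = \mathbb{E}_{\theta}\!\left[\, p\big(f;(M,\theta)\big)\,\right],
\end{equation*}
where $\theta$ ranges over uniformly random injections $\{1,\ldots,\abs{\sigma}\}\to\L(M)$ and we adopt the convention $p\big(f;(M,\theta)\big)\coloneqq 0$ whenever $(M,\theta)$ fails to be a $\sigma$-tree-flag. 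By linearity it suffices to verify this for a single $\sigma$-tree-flag $S$, where it is a routine double-counting: drawing a random injection $\theta$ and then a random $(\abs{S}-\abs{\sigma})$-subset of $\L(M)\setminus\im\theta$ is the same experiment as drawing a random $\abs{S}$-subset $U\subseteq\L(M)$ and then a random injective labelling of $\abs{\sigma}$ of its elements; conditioned on $M|_U\cong S|_0$, the probability of landing on exactly $S$ is the factor $q_\sigma(S)$ from the definition of $\llbracket S\rrbracket_\sigma$, and the identity follows. The same computation shows that $\llbracket\cdot\rrbracket_\sigma$ descends to the quotient $\A^\sigma$: if $f\in\mathcal{K}^\sigma$ then $p(f;(M,\theta))=0$ for all large $M$, hence $p(\llbracket f\rrbracket_\sigma;M)=0$ for all large $M$, i.e.\ $\llbracket f\rrbracket_\sigma\in\mathcal{K}^\varnothing$.

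Now fix a convergent increasing sequence $\T=(G_n)_n$ of unlabeled trees; the goal is $\phi_{\T}(\llbracket T\rrbracket)\geq 0$. By the averaging identity,
\begin{equation*}
	\phi_{\T}(\llbracket T\rrbracket) = \lim_{n\to\infty}\ \mathbb{E}_{\theta}\!\left[\, p\big(T;(G_n,\theta)\big)\,\right].
\end{equation*}
Assume for contradiction that this limit equals $-\delta$ for some $\delta>0$. Then for all large $n$ the expectation is below $-\delta/2$, so there exists at least one injection $\theta_n\colon\{1,\ldots,\abs{\sigma}\}\to\L(G_n)$ with $p\big(T;(G_n,\theta_n)\big)<-\delta/2$; in particular $(G_n,\theta_n)$ is a genuine $\sigma$-tree-flag, since otherwise that density would be $0$. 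The sizes $\abs{G_n}$ are strictly increasing, so $\big((G_n,\theta_n)\big)_n$ is an increasing sequence of $\sigma$-tree-flags, and by Tychonoff's theorem we may pass to a subsequence along which $p\big(F;(G_n,\theta_n)\big)$ converges for every $\sigma$-tree-flag $F$. This produces a convergent increasing sequence $\psi$ of $\sigma$-tree-flags with $\phi_{\psi}(T)=\lim_n p\big(T;(G_n,\theta_n)\big)\leq-\delta/2<0$, contradicting $T\geq 0$. Hence $\phi_{\T}(\llbracket T\rrbracket)\geq 0$, and as $\T$ was arbitrary, $\llbracket T\rrbracket\geq 0$.

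The conceptual crux, and the step I expect to demand the most care, is exactly this extraction: the hypothesis $T\geq 0$ is an \emph{asymptotic} statement about sequences of flags, and it is genuinely false in general that $p\big(T;(M,\theta)\big)\geq 0$ for individual flags, so one cannot conclude nonnegativity termwise. Instead one must argue that a strictly negative \emph{average} over flags forces, at every scale, a single flag that stays negative, and then use compactness to splice these flags into one sequence witnessing $\phi_\psi(T)<0$. A minor loose end to tie off is the degenerate regime where $\sigma$ does not embed in $G_n$: then no $\theta$ makes $(G_n,\theta)$ a $\sigma$-flag, the right-hand side of the averaging identity is $0$, and this is consistent because every tree occurring in $\llbracket T\rrbracket$ contains $\sigma$ as an induced subtree and therefore has density $0$ in $G_n$; such indices contribute only harmless zeros to the limit.
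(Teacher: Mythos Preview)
The paper does not supply its own proof of this theorem; it simply quotes it as Theorem 3.1 of Razborov~\cite{razborov2007} and moves on. Your argument is correct and is essentially Razborov's original proof: the averaging identity $p(\llbracket f\rrbracket_\sigma;M)=\mathbb{E}_\theta[p(f;(M,\theta))]$ is exactly his Lemma 3.5 (in the language of the present paper), and the extraction-by-compactness step is the content of his Theorem 3.1. One cosmetic remark: you write ``there exists at least one injection $\theta_n$ with $p(T;(G_n,\theta_n))<-\delta/2$''; strictly you only get $\leq$ the average, but since the average is itself $<-\delta/2$ this is harmless.
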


We give a few examples of applications of the downwards operator below.
\vspace{-.75cm}
\begin{center}
	\begin{equation*}
		\Bigl\llbracket
		\hspace{-2.5cm}
		\begin{minipage}{0.49\textwidth}
			\centering
			\resizebox{.15\textwidth}{!}{%
				\begin{forest}
					[, treeNodeRoot
							[1, treeNodeLabeled]
							[, treeNodeInner
									[, treeNode]
									[, treeNode]
							]
					]
				\end{forest}
			}
		\end{minipage}
		\hspace{-2.5cm}
		\Bigr\rrbracket
		=  \frac{1}{3}
		\hspace{-2.2cm}
		\begin{minipage}{0.49\textwidth}
			\centering
			\includegraphics[width=.15\linewidth]{Pictures/Tree-4.pdf}
		\end{minipage}
	\end{equation*}
	\begin{equation*}
		\Bigl\llbracket
		\hspace{-2.5cm}
		\begin{minipage}{0.49\textwidth}
			\centering
			\resizebox{.15\textwidth}{!}{%
				\begin{forest}
					[, treeNodeRoot
							[, treeNode]
							[, treeNodeInner
									[1, treeNodeLabeled]
									[, treeNode]
							]
					]
				\end{forest}
			}
		\end{minipage}
		\hspace{-2.5cm}
		\Bigr\rrbracket
		=  \frac{2}{3}
		\hspace{-2.2cm}
		\begin{minipage}{0.49\textwidth}
			\centering
			\includegraphics[width=.15\linewidth]{Pictures/Tree-4.pdf}
		\end{minipage}
	\end{equation*}
	\vspace*{-.2cm}
	\begin{equation*}
		\Bigl\llbracket \ \Bigl(
		\hspace{-1cm}
		\begin{minipage}{0.25\textwidth}
			\centering
			\resizebox{.3\textwidth}{!}{%
				\begin{forest}
					[, treeNodeRoot
							[1, treeNodeLabeled]
							[, treeNodeInner
									[, treeNode]
									[, treeNode]
							]
					]
				\end{forest}
			}
		\end{minipage}
		\hspace{-.9cm}
		\Bigr)^2 \  \Bigr\rrbracket
		=
		\ \frac{1}{5}
		\hspace{-.45cm}
		\begin{minipage}{0.18\textwidth}
			\centering
			\includegraphics[width=.55\linewidth]{Pictures/Tree-8.pdf}
		\end{minipage}
		\hspace{-.8cm}
		+ \ \frac{1}{5}
		\hspace{-.45cm}
		\begin{minipage}{0.18\textwidth}
			\centering
			\includegraphics[width=.6\linewidth]{Pictures/Tree-9.pdf}
		\end{minipage}
		\hspace{-.05\textwidth}
		+ \ \frac{1}{15} \
		\begin{minipage}{0.1\textwidth}
			\centering
			\includegraphics[width=1.2\linewidth]{Pictures/Tree-7.pdf}
		\end{minipage}
	\end{equation*}
\end{center}

All averaged products of trees-flags needed for up to the $6$th level of the hierarchy (defined in the next section) are given in Appendix~\ref{app:products}.

\section{Flag sums of squares and semidefinite programming}
\label{sec:computation}

We now explain how we use and implement the flag algebra of trees to obtain computer-assisted bounds on inducibilities of trees.

Quantum trees $S\in \A^\sigma$ correspond to functions which send tree sequences $\T$ to real numbers $\reviseSecond{\psi}_\T(S)$. Thus, analogously to polynomial optimization~\cite{Laurent2008a}, we can use the sums of squares method to compute bounds on extremal problems in trees. The idea is simple: If $f = c_1T_1 + \cdots + c_n T_n$ is a quantum flag in the algebra $\A^{\sigma}$, then both its square $f^2$ and, more importantly, its unlabeled square $\llbracket f^2\rrbracket$ are nonnegative functions on tree sequences (see Theorem 3.14 in \cite{razborov2007}). Thus, every squared and unlabeled quantum flag proves an inequality of the form $\llbracket f^2\rrbracket\geq 0$ in $\A^\varnothing$.

Let $\mathcal{F}_\sigma$ be a vector of $n$ tree-flags of type $\sigma$.
Quantum flags involving the flags of $\mathcal{F}_{\sigma}$ are of the form
\begin{equation*}
	f = c^\top \mathcal{F}_\sigma,
\end{equation*}
where $c\in\revisedeletemath{\R^{\mathcal{F}_\sigma}} \revise{\R^n}$ is the vector of coefficients of the quantum flag.
Unlabeled squares of quantum flags can be written as
\begin{equation*}
	\llbracket f^2 \rrbracket = \llbracket (c^\top\mathcal{F}_\sigma)^2 \rrbracket = \llbracket\langle cc^\top, \mathcal{F}_\sigma\mathcal{F}_\sigma^\top \rangle\rrbracket=\langle cc^\top, \llbracket\mathcal{F}_\sigma\mathcal{F}_\sigma^\top \rrbracket\rangle,
\end{equation*}
where
\begin{equation*}
	\langle A,B\rangle \coloneqq \tr(A^\top B) = \sum_{i,j=1}^n A_{ij}B_{ij}
\end{equation*}
denotes the \emph{trace inner product} of (symmetric) matrices $A,B\in S^n$.

Here, $cc^\top$ is a rank-one \emph{positive semidefinite matrix}, i.e.\ $v^\top (cc^\top) v = (c^\top v)^2\geq 0$ for all vectors $v\in \R^n$. Positive semidefinite matrices form a convex cone $S^n_{\succcurlyeq 0}$. We denote $X\in S^n_{\succcurlyeq 0}$ by $X\succcurlyeq 0$. Hence, flag sums of squares $\llbracket\sum_{i=1}^{k}f_i^2\rrbracket$, where the $f_i$ are quantum flags in $\A^\sigma$, are of the form
\begin{equation*}
	\langle M_\sigma, \llbracket\mathcal{F}_\sigma\mathcal{F}_\sigma^\top \rrbracket\rangle
\end{equation*}
where $M_\sigma\in S^n_{\succcurlyeq 0}$ is a positive semidefinite matrix.

Of course, we can combine sums of squares coming from algebras $\A^\sigma$ of different types $\sigma$. In general, we can work with a family $(\mathcal{F}_\sigma)_\sigma$, where $\mathcal{F}_\sigma$ $\revisedeletemath{\subseteq \A^\sigma}$ are vectors of flags \revise{of type $\sigma$}. We call
\begin{equation*}
	f = \sum_\sigma \langle M_\sigma, \llbracket\mathcal{F}_\sigma\mathcal{F}_\sigma^\top \rrbracket\rangle,
\end{equation*}
where the $M_\sigma$ are positive semidefinite matrices of appropriate sizes, a \emph{sum of squares certificate} for the nonnegativity of a quantum tree $f\in\A^\varnothing$.

\subsection{A hierarchy of SDPs }

Comparing the coefficients of a quantum flag $f\in\A^\varnothing$ with the coefficients in the sum of squares $\sum_\sigma \langle M_\sigma, \llbracket\mathcal{F}_\sigma\mathcal{F}_\sigma^\top \rrbracket\rangle$ (up to quotienting out $\mathcal{K}^\varnothing$) leads to linear constraints on the coefficients of the positive semidefinite matrices $M_\sigma$. To implement the quotient algebra $\A^\varnothing\coloneqq \sfrac{\R\revise{\trees^\varnothing}}{\mathcal{K}^\varnothing}$ we can add free variables corresponding to a basis of $\mathcal{K}^\varnothing$.

What remains to decide is which vectors of flags $\mathcal{F}_\sigma$ to use. We chose to implement the analogue of the SDP hierarchy used by the software Flagmatic, as it is described in Section 2.3 of \cite{FalgasRavry2012a}. For each natural number $L\in \mathbb{N}$, we define a level $L$ of the hierarchy, which we denote by $\mathrm{SOS}_L\subseteq \A^\varnothing$. For level $L$ of the hierarchy, we consider flags of types $\sigma$ with $|\sigma| \leq L$ and $|\sigma|\equiv L\mod 2$. We then form the vector $\mathcal{F}_\sigma$ to consist of all $\sigma$-flags with exactly $(L-|\sigma|)/2$ unlabeled leaves up to (label preserving) isomorphism. This way, all products of flags in each $\mathcal{F}_\sigma\mathcal{F}_\sigma^\top$ result in a tree with exactly $L$ vertices. We would not gain anything here by considering types with $|\sigma| + 1\equiv L\mod 2$, or flags with fewer unlabeled leaves due to the quotient relations \eqref{eq:quotient}.

With this choice of types and vectors of flags, we define the $L$th level of the hierarchy as
\begin{equation*}
	\mathrm{SOS}_L \coloneqq \left\lbrace\sum_\sigma \langle M_\sigma, \llbracket\mathcal{F}_\sigma\mathcal{F}_\sigma^\top \rrbracket\rangle \mid M_\sigma \succcurlyeq 0\right\rbrace \subseteq \A^\varnothing.
\end{equation*}
We give a list of sizes of the semidefinite blocks of the hierarchy for the first few levels in Table~\ref{tab:blockSizes}.

\begin{table}[ht!]
	\centering
	\begin{tabular}{@{}ccc@{}}
    \toprule
    Level & Block sizes  & Sum \\\midrule
    $4$   & $3_{1}1_{3}$ & 6   \\$5$ & $5_{1}2_{1}1_{3}$ & 10\\$6$ & $9_{1}7_{2}1_{7}$ & 30\\$7$ & $20_{1}9_{3}4_{1}1_{11}$ & 62\\$8$ & $35_{2}25_{1}11_{6}2_{1}1_{23}$ & 186\\$9$ & $70_{1}54_{3}13_{11}9_{1}1_{46}$ & 430\\$10$ & $147_{2}77_{6}69_{1}15_{23}3_{1}1_{98}$ & 1271\\$11$ & $264_{3}230_{1}104_{11}20_{1}17_{46}1_{207}$ & 3175\\\bottomrule
\end{tabular}

	\caption{The block sizes of the hierarchy $\mathrm{SOS}_\text{Level}$. They are given in the form $(\text{size of block})_{\text{multiplicity of block}}$.}
	\label{tab:blockSizes}
\end{table}


\subsection{Computational approach}\label{sec:comp}

We implemented the flag algebra of trees as part of the Julia package \texttt{FlagSOS.jl}\footnote{\url{https://github.com/DanielBrosch/FlagSOS.jl}} introduced in the thesis~\cite{brosch2022symmetry}. All code to recover the results of the paper is available as ancillary file.

\paragraph{Computing products of tree flags.} We compute all relevant products between flags simultaneously in a preprocessing step: We first generate all trees up to isomorphism (up to \revisedelete{a $T$ number of} \revise{$L$} leaves, where $L$ is the level of the hierarchy). For each such tree \revise{$T$} we then compute all pairs of (possibly overlapping) subtrees \revisedelete{$(t_1,t_2)$} \revise{$(S_1,S_2)$} of \revise{$T$} such that their union of leaves is the set of leaves of \revise{$T$}. Adding labels to the vertices in the overlap gives us a contribution to the product of the \revise{corresponding} flags \revisedelete{$t_1\cdot t_2$}.

\paragraph{Rounding.} The matrices $M_\sigma$ returned by the solver are only approximately positive semi-definite, i.e.\ the matrices may have eigenvalues slightly below zero. To work around this, we do the following: We (numerically) compute the eigenvalues and eigenvectors of the \revise{matrices} $M_\sigma$, and fix the negative eigenvalues to zero. We then round the eigenvectors, scaled by the square roots of the positive eigenvalues, to rationals, to compute a decomposition $M_\sigma \approx V_\sigma V_\sigma^\top\succcurlyeq 0$, where $V_\sigma$ is a rational rectangular matrix. As $V_\sigma V_\sigma^\top$ is positive semidefinite, we obtain a rational certificate:
\begin{equation*}
	\sum_\sigma \langle M_\sigma, \llbracket\mathcal{F}_\sigma\mathcal{F}_\sigma^\top \rrbracket\rangle + f_{\text{err}} = \sum_\sigma \langle V_\sigma V_\sigma^\top, \llbracket\mathcal{F}_\sigma\mathcal{F}_\sigma^\top \rrbracket\rangle \geq 0.
\end{equation*}
Here $f_{\text{err}} = \sum_T c_T T\in \A^\varnothing$ is an error term resulting from the rounding. By definition of trees $T$ as density functionals, we know \revise{each} $T$ takes values in the interval $[0,1]$. This way we can bound the error term to take values in the interval
\begin{equation*}
	-\sum_t |c_T| \leq f_{\text{err}} \leq \sum_t |c_T|.
\end{equation*}
\revisedelete{In practice, this error is of order $10^{-6}$.} We add this error with the appropriate sign to computed bounds to obtain rigorous bounds.

\paragraph{Solvers.}
We \revise{computed all numerical bounds} in this paper with Mosek~\cite{mosek} on a server equipped with an AMD EPYC 7532 32-Core Processor @ 3.30GHz and 1024GB of RAM. The loss in objective after applying the above rounding procedure was, in all cases, of order $10^{-6}$. \revise{The exact bounds in Section~\ref{sec:InducExact} were computed using the ClusteredLowRankSolver~\cite{leijenhorstSolvingClusteredLowrank2024}.}

\section{Results}
\label{sec:results}

In this section, we show and explain some of the more interesting results we obtained thanks to the flag algebra of trees on the inducibility of trees and on \revisedelete{their} tree-profiles.


\subsection{Inducibility}
\label{sec:resultsInd}

We can build an SDP based hierarchy for approximating the inducibility of a tree $S$ the following way:
\begin{align*}
	I(S) \coloneqq \max_{\mathcal{T}} \phi_{\mathcal{T}}(S) = \max S = \min \{t \mid t-S \geq 0\} \leq \min \{t \mid t-S \in \mathrm{SOS}_L\} \eqqcolon I_L(S),
\end{align*}
where $L\geq |S|$ is the level of the hierarchy. We are able to compute $I_{11}(S)$ numerically for all trees with up to $11$ leaves in at most $10$ seconds each.
All these results are given in Appendix~\ref{app:bounds}.
We present here our results for some selected trees. \revise{We start by giving a full example, the inducibility of the even tree $E_5$. We detail how one can use known optimal constructions (i.e.\ tree limits attaining the inducibility) to find the exact certificate. Then, we give other exact results we managed to obtain in a similar fashion, recovering a known inducibility proven in~\cite{wagner2017} and determining two new inducibilities. Finally, we detail the progression of the bounds for three small trees where the bound does not appear to have converged yet. These include a tree with \reviseSecond{known strong, but not tight,} upper and lower bounds, obtained in~\cite{DossouOlory2018}.}
\revisedelete{all the non-trivial trees up to size $6$, the ones with a known inducibility proven in~\cite{wagner2017}; and the one with known tight upper and lower bounds, obtained in~\cite{DossouOlory2018}, for which we detail the evolution with the levels of the hierarchy.}

\subsubsection{Example: The inducibility of $E_5$ and a phantom branch}\label{sec:inducibilityEFive}
To both give a full SDP-example and some of the ideas and consequences of the exact solutions of the primal and dual problem, we detail here how we recover the exact inducibility of the balanced tree with five leaves $E_5$: $I(E_5) = \frac{2}{3}$, first proven in~\cite{wagner2017}. A sharp asymptotic construction is the \emph{infinite even tree} $E_\infty = (E_1, E_2, E_3, E_4,\ldots)$.
\newcommand{\TIrr}{T}
\newcommand{\TOne}{\textcircled{\small 1}}
\newcommand{\FA}{
	\mathcal{F}_{\small\TOne}
}
\newcommand{\FAT}{
	\mathcal{F}_{\small\TOne}^\top
}
\newcommand{\TThree}{\begin{minipage}{0.04\textwidth}
		\centering
		\resizebox{\textwidth}{!}{%
			\begin{forest}
				[, treeNodeRoot
						[1, treeNodeLabeled]
						[, treeNodeInner
								[2, treeNodeLabeled],
							[3, treeNodeLabeled]
						]
				]
			\end{forest}
		}
	\end{minipage}}
\newcommand{\FB}{
	\mathcal{F}_{\TThree}
}

We here need the fifth level of the hierarchy to obtain the sharp bound: $I_5(E_5) = I(E_5)$. The flags we need are \vspace*{-.2cm}
\begin{align*}
	\FA           & =\left(
	\begin{minipage}{0.05\textwidth}
		\centering
		\resizebox{\textwidth}{!}{%
			\begin{forest}
				[, treeNodeRoot
						[, treeNode]
						[, treeNodeInner
								[1, treeNodeLabeled],
							[, treeNode]
						]
				]
			\end{forest}
		}
	\end{minipage},
	\begin{minipage}{0.05\textwidth}
		\centering
		\resizebox{\textwidth}{!}{%
			\begin{forest}
				[, treeNodeRoot
						[1, treeNodeLabeled]
						[, treeNodeInner
								[, treeNode],
							[, treeNode]
						]
				]
			\end{forest}
		}
	\end{minipage}
	\right)^\top, &
	\FB           & = \left(
	\begin{minipage}{0.07\textwidth}
		\centering
		\resizebox{\textwidth}{!}{%
			\begin{forest}
				[, treeNodeRoot
						[1, treeNodeLabeled]
						[, treeNodeInner
								[3, treeNodeLabeled],
							[, treeNodeInner
										[2, treeNodeLabeled],
									[, treeNode],
								]
						]
				]
			\end{forest}
		}
	\end{minipage},
	\begin{minipage}{0.07\textwidth}
		\centering
		\resizebox{\textwidth}{!}{%
			\begin{forest}
				[, treeNodeRoot
						[, treeNodeInner
								[1, treeNodeLabeled]
								[, treeNode],
						]
						[, treeNodeInner
								[2, treeNodeLabeled],
							[3, treeNodeLabeled],
						]
				]
			\end{forest}
		}
	\end{minipage},
	\begin{minipage}{0.07\textwidth}
		\centering
		\resizebox{\textwidth}{!}{%
			\begin{forest}
				[, treeNodeRoot
						[, treeNode]
						[, treeNodeInner
								[1, treeNodeLabeled],
							[, treeNodeInner
										[2, treeNodeLabeled],
									[3, treeNodeLabeled],
								]
						]
				]
			\end{forest}
		}
	\end{minipage},
	\begin{minipage}{0.07\textwidth}
		\centering
		\resizebox{\textwidth}{!}{%
			\begin{forest}
				[, treeNodeRoot
						[1, treeNodeLabeled]
						[, treeNodeInner
								[2, treeNodeLabeled],
							[, treeNodeInner
										[3, treeNodeLabeled],
									[, treeNode],
								]
						]
				]
			\end{forest}
		}
	\end{minipage},
	\begin{minipage}{0.07\textwidth}
		\centering
		\resizebox{\textwidth}{!}{%
			\begin{forest}
				[, treeNodeRoot
						[1, treeNodeLabeled]
						[, treeNodeInner
								[, treeNode],
							[, treeNodeInner
										[2, treeNodeLabeled],
									[3, treeNodeLabeled],
								]
						]
				]
			\end{forest}
		}
	\end{minipage}
	\right)^\top,
\end{align*}
as well as the three trees with five leaves which are fully labeled.

The unlabeled products of these flags (see also Appendix~\ref{app:products}) give the symmetric matrices
\renewcommand{\arraystretch}{1.25}
\begin{align*}
	\left\llbracket \FA\FAT\right\rrbracket      & = \begin{pmatrix}
		                                                 \frac{8}{15} \cat{5} + \frac{8}{15} \TIrr + \frac{2}{5} E_5
		                                                  & \quad
		                                                 \frac{2}{15} \cat{5} + \frac{2}{15} \TIrr + \frac{4}{15} E_5 \\
		                                                  & \quad
		                                                 \frac{1}{5} \cat{5} + \frac{1}{5} \TIrr + \frac{1}{15} E_5
	                                                 \end{pmatrix},                                                                                \\
	\left\llbracket \FB\FB^\top \right\rrbracket & = \begin{pmatrix}
		                                                 \frac{1}{20}\cat{5} & \frac{1}{30}E_5 & \frac{1}{60}\cat{5}                 & \frac{1}{15}\TIrr   & \frac{1}{60}\cat{5}                     \\
		                                                                     & \frac{1}{10}E_5 & \frac{1}{15}\TIrr                   & \frac{1}{30}E_5     & \frac{1}{30}E_5                         \\
		                                                                     &                 & \frac{1}{30}\cat{5}+\frac{1}{30}E_5 & \frac{1}{60}\cat{5} & \frac{1}{60}\cat{5}                     \\
		                                                                     &                 &                                     & \frac{1}{20}\cat{5} & \frac{1}{60}\cat{5}                     \\
		                                                                     &                 &                                     &                     & \frac{1}{30}\cat{5} + \frac{1}{15}\TIrr
	                                                 \end{pmatrix},
\end{align*}
where $\TIrr = \begin{minipage}{0.05\textwidth}\centering\resizebox{\textwidth}{!}{\begin{forest}
				[, treeNodeRoot
						[, treeNode]
						[, treeNodeInner
								[, treeNodeInner
										[, treeNode],
									[, treeNode],
								],
							[, treeNodeInner
										[, treeNode],
									[, treeNode],
								],
						]
				]
			\end{forest}}\end{minipage}$. The remaining three $1\times 1$-blocks correspond to slack variables for the three trees with five leaves.

The bound $I_5(E_5)$ is obtained by solving the SDP
\begin{equation}
	\begin{aligned}\label{eq:IndE5_Opt}
		\min_{\substack{X_1,X_2\succcurlyeq 0 \\x_{\cat{5}},x_\TIrr,x_{E_5},t\geq 0}} t \quad\text{s.t.}\quad   t\varnothing - E_5 = &\left\langle X_1, \left\llbracket \FA\FAT\right\rrbracket\right\rangle + \left\langle X_2, \left\llbracket \FB\FB^\top \right\rrbracket\right\rangle \\ &+ x_{\cat{5}} \cat{5} + x_\TIrr \TIrr + x_{E_5} E_5
	\end{aligned}
\end{equation}
and can easily be written in standard form by comparing the coefficients of both sides of the constraint, remembering that $\varnothing = \cat{5} + T + E_5$ by Lemma~\ref{lemma:chain_rule}.

\paragraph{Numerical and rigorous bound.} Solving this SDP numerically yields a bound of approximately $0.6666667$. After applying the rounding procedure described in Section~\ref{sec:comp}, we obtain the (slightly weaker) rigorous bound $\frac{262812752561575991588071}{394219100000000000000000}$. To derive the exact bound of $\frac{2}{3}$, we can apply more advanced rounding methods as, for example, developed in~\cite{cohn2024optimalitysphericalcodesexact}. Here, we follow the combinatorial
approach to obtain an exact certificate, introduced by Razborov~\cite{Razborov2010}, and built upon to include \emph{phantom edges} by Pikhurko and Vaughan~\cite{PIKHURKO_VAUGHAN_2013}.

\paragraph{Complementary slackness.} Any primal optimal solution of~\eqref{eq:IndE5_Opt} has to be \emph{complementary} to any dual optimal solution. \reviseSecond{If the bound is sharp, as is indicated by the numerical bound here,} we can construct optimal dual solutions from optimal constructions. Here the optimal construction is the infinite even tree $E_\infty$. Indeed, for any optimal construction $\T$, evaluating $\phi_\T$ on both sides of \eqref{eq:IndE5_Opt} implies
\reviseSecond{
	\begin{equation}\label{eq:compl_mat_unlabeled}
		\left\langle X_1, \phi_\T\left(\left\llbracket \FA\FAT\right\rrbracket\right)\right\rangle = \left\langle X_2, \phi_\T\left(\left\llbracket \FB\FB^\top \right\rrbracket\right)\right\rangle = 0,
	\end{equation}
}
and
\begin{equation}\label{eq:compl_scal}
	x_{\cat{5}} \phi_{\T}(\cat{5}) = x_\TIrr \phi_{\T}(\TIrr) = x_{E_5} \phi_{\T}(E_5) = 0.
\end{equation}
\reviseSecond{We want to use equation~\eqref{eq:compl_mat_unlabeled} to find explicit kernel vectors of the matrices $X_i$. To do so, we will need to remove the unlabeling operator $\llbracket \cdot\rrbracket$, and move from densities of (unlabeled) trees to densities of flags. Recall here that $\phi_\T\in \mathrm{Hom}^+(\A^\varnothing, \mathbb{R})$ refers to the homomorphism corresponding to the unlabeled densities, and $\psi_\T\in \mathrm{Hom}^+(\A^\sigma, \mathbb{R})$ to labeled densities in a labeling of $\T$.}

\reviseSecond{
	In contrast to unlabeling flags, moving from unlabeled trees to flags is more involved, as it is not unique. However, by Razborov~\cite{razborov2007}, Theorem~3.5 (and Definition~10), there exists a unique \emph{ensemble of random homomorphisms} $(\psi_{\mathbf{T}}^\sigma)_{\sigma \in \trees}$ that is \emph{rooted at} $\phi_\T$. Each $\psi_{\mathbf{T}}^\sigma$ is a random variable taking values in $\mathrm{Hom}^+(\A^\sigma, \mathbb{R})$ (i.e., it represents the vector of densities of $\sigma$-flags) arising from the different ways of assigning labels to the trees in the increasing sequence $\T$ so as to obtain convergent sequences of $\sigma$-flags. The index~$\mathbf{T}$ serves to indicate that the random variable is obtained by the limits of flags generated from the sequence~$\T$.

	Here, being \emph{rooted} means that for every quantum flag $f$ of type~$\sigma$,
	\begin{equation*}
		\mathbb{E}[\psi_{\mathbf{T}}^\sigma(f)]
		= \frac{\phi_\T(\llbracket f\rrbracket)}{q_\sigma(\sigma)\,\phi_\T(\sigma)},
	\end{equation*}
	and consequently,
	\begin{equation}
		\mathbb{E}\!\left[\left\langle X_1, \psi_{\mathbf{T}}^{\small\TOne} \FA\FAT\right\rangle\right]
		= \mathbb{E}\!\left[\left\langle X_2, \psi_{\mathbf{T}}^{\TThree} \FB\FB^\top\right\rangle\right]
		= 0.
	\end{equation}
	Since these expressions are expectations of nonnegative quantities, we find that, for almost every realization $\T$ of the labeling process,
	\begin{equation}\label{eq:compl_mat}
		\left\langle X_1, \reviseSecond{\psi}_{\T}(\FA)\reviseSecond{\psi}_{\T}(\FA)^\top \right\rangle
		= \left\langle X_2, \reviseSecond{\psi}_{\T}(\FB)\reviseSecond{\psi}_{\T}(\FB)^\top \right\rangle
		= 0.
	\end{equation}

}

\paragraph{Primal kernel.} The first kind of complementarity comes from the densities of graphs in $E_\infty$: for $S\in \{\cat{5}, \TIrr, E_5\}$, we have $x_S\phi_{E_\infty}(S)=0$ by equation~\eqref{eq:compl_scal}. All trees have a positive density in $E_\infty$, thus all three $x_S$ variables have to be zero. Such trees we call \emph{sharp}, following Lemma 3.2. of~\cite{PIKHURKO_VAUGHAN_2013}.

The second form of complementarity, known as forced zero eigenvectors, proceeds analogously to Lemma 3.1 of~\cite{PIKHURKO_VAUGHAN_2013}. For each type $\sigma \in \left\lbrace\TOne, \TThree\right\rbrace$, we can transform the sequence $E_\infty$ into an increasing sequence of $\sigma$-flags $(E_k, \eta_k)_{k \geq 0}$ by embedding $\sigma$ into each tree of $E_\infty$. There are multiple valid choices for where to place $\sigma$ within the trees so that the resulting sequence converges (i.e., the corresponding flag density sequences converge). Each such sequence yields a vector $\reviseSecond{\psi}_{\T}(\mathcal{F}_\sigma)\coloneqq(\reviseSecond{\psi}_{\T}(F))_{F \in \mathcal{F}_\sigma}$. By~\eqref{eq:compl_mat} we have $\langle X_i, \reviseSecond{\psi}_{\T}(\mathcal{F}_\sigma)\reviseSecond{\psi}_{\T}(\mathcal{F}_\sigma)^\top\rangle = 0$ \reviseSecond{almost surely, especially whenever we have a positive chance of obtaining the sequence of flags by placing the labels at random}. For any positive semidefinite matrix $X$ and any vector $v$ we have that $\langle X, vv^\top\rangle=v^\top Xv = 0$ implies $v\in\ker X$. Thus $\reviseSecond{\psi}_{\T}(\mathcal{F}_\sigma)$ must lie in the kernel of $X_1$ or $X_2$, respectively.

\newcommand{\MakeFlag}[3]{{#1}_{#2,{\small #3}}}

For the type $\TOne$, all such sequences of $\TOne$-flags created from  $E_\infty$ are asymptotically equal. We call this limit $\MakeFlag{E}{\infty}{\TOne} = (\MakeFlag{E}{n}{\TOne})_{n\geq 0}$.

Here we find three different ways to place labels in $E_\infty$ to turn it into a sequence of $\TThree$-flags:
\begin{equation*}
	\begin{minipage}{0.2\textwidth}\centering\resizebox{\textwidth}{!}{\begin{forest}
				[, treeNodeRoot
						[$\MakeFlag{E}{\tfrac n 2}{\textcircled{\small 1}}$, treeNodeLabeled],
					[, treeNodeInner
								[$\MakeFlag{E}{\tfrac n 4}{\textcircled{\small 2}}$, treeNodeLabeled],
							[$\MakeFlag{E}{\tfrac n 4}{\textcircled{\small 3}}$, treeNodeLabeled]
						]
				]
			\end{forest}}\end{minipage}, \hspace*{.15cm}
	\begin{minipage}{0.2\textwidth}\centering\resizebox{\textwidth}{!}{\reviseSecond{\begin{forest}
					[, treeNodeRoot
							[$\MakeFlag{E}{\tfrac n 2}{\textcircled{\small 1}}$, treeNodeLabeled],
						[, treeNodeInner
									[
										$E_{\tfrac{n}{4}}$, treeNodeLabeled
									],
								[
										, treeNodeInner
											[
												$\MakeFlag{E}{\tfrac n 8}{\textcircled{\small 2}}$, treeNodeLabeled
											],
										[
												$\MakeFlag{E}{\tfrac n 8}{\textcircled{\small 3}}$, treeNodeLabeled
											]
									]
							]
						\hspace*{-.1cm}
					]
				\end{forest}}}\end{minipage}, \hspace*{.2cm}
	\begin{minipage}{0.2\textwidth}\centering\resizebox{\textwidth}{!}{\reviseSecond{\begin{forest}
					[, treeNodeRoot
							[$E_{\tfrac{n}{2}}$, treeNodeLabeled],
						[, treeNodeInner
									[
										$\MakeFlag{E}{\tfrac n 4}{\textcircled{\small 1}}$,, treeNodeLabeled
									],
								[
										, treeNodeInner
											[
												$\MakeFlag{E}{\tfrac n 8}{\textcircled{\small 2}}$, treeNodeLabeled
											],
										[
												$\MakeFlag{E}{\tfrac n 8}{\textcircled{\small 3}}$, treeNodeLabeled
											]
									]
							]
						\hspace*{-.1cm}
					]
				\end{forest}}}\end{minipage}.
\end{equation*}
We call the three corresponding \reviseSecond{(flag)} limits $E_{\infty}^1, E_{\infty}^2, E_{\infty}^3$, each of which is the infinite even tree $E_\infty$ when unlabeled. \reviseSecond{Asymptotically, each has a  positive probability of being obtained by placing the labels at random. Thus, equation~\eqref{eq:compl_mat} holds for these constructions, and }evaluating the vectors of flags provides the following kernel vectors
\begin{align*}
	\reviseSecond{\psi}_{\MakeFlag{E}{\infty}{\resizebox{0.7em}{!}{\TOne}}}(\FA) & = (\tfrac 2 3, \tfrac 1 3)^\top,                                             &
	\reviseSecond{\psi}_{E_{\infty}^1}(\FB)                                      & = (\tfrac 1 4, \tfrac 1 2, 0, \tfrac{1}{4}, 0)^\top,                           \\
	\reviseSecond{\psi}_{E_{\infty}^2}(\FB)                                      & = \reviseSecond{(\tfrac 1 8, \tfrac 1 2, 0, \tfrac 1 8, \tfrac 1 4)^\top},   &
	\reviseSecond{\psi}_{E_{\infty}^3}(\FB)                                      & = \reviseSecond{(\tfrac 1 8, \tfrac 1 4, \tfrac{1}{2}, \tfrac 1 8, 0)^\top}.   \\
\end{align*}
This restricts the row spaces of $X_1$ and $X_2$ to subspaces of dimensions one and two respectively.

\paragraph{Phantom Kernel.} Numerically, the sums of squares certificate appears to have rank one, not two. However, there is no fourth way to place labels in $E_\infty$ to obtain a kernel vector linearly independent of the ones we already found. Such ``missing'' extremal constructions have been observed before, and \reviseSecond{in some cases can be explained through} \emph{phantom edges}, first introduced through Lemma 3.3. in~\cite{PIKHURKO_VAUGHAN_2013}. Adding finitely many edges to a graph limit does not modify it, as none of its subgraph densities change. But such additional edges may change the densities of flags in the constructions for specific placements of labels. Here we observe something similar: Adding a single \emph{phantom branch} at the root of $E_\infty$ does result in the same densities of subtrees in the limit, but gives us an additional option to place labels
\begin{equation*}
	\begin{minipage}{0.2\textwidth}\centering\resizebox{\textwidth}{!}{\begin{forest}
				[, treeNodeRoot
						[$\MakeFlag{E}{\tfrac n 2}{\textcircled{\small 1}}$, treeNodeLabeled],
					[, treeNodeInner
								[3, treeNodeLabeled],
							[$\MakeFlag{E}{\tfrac n 2}{\textcircled{\small 2}}$, treeNodeLabeled]
						]
				]
			\end{forest}}\end{minipage}\reviseSecond{,}
\end{equation*}
\reviseSecond{where $\MakeFlag{E}{\tfrac n 2}{\TOne}$ and $\MakeFlag{E}{\tfrac n 2}{\textcircled{\small 2}}$ are even balanced trees as defined above and $\textcircled{\small 3}$ is a single leaf with label $3$.}
We call its limit $\tilde{E}_\infty$. Note that for each (large enough) finite $n$ this tree is not an even tree. \reviseSecond{This limit of flags gives us the last kernel vector of $X_2$}
\begin{equation*}
	\reviseSecond{\psi}_{\tilde{E}_{\infty}}(\FB) = (\tfrac{1}{2}, \tfrac{1}{2}, 0, 0, 0)^\top.
\end{equation*}
\reviseSecond{
	\begin{remark}
		In contrast to the vectors obtained previously, we cannot simply assume here that this vector \emph{has to be} a kernel vector of any certificate $X_2$. The probability of placing the label $\textcircled{\small 3}$ on the new leaf is zero, and as such equation~\eqref{eq:compl_mat} does not have to hold. In some cases more elaborate arguments are possible~\cite{PIKHURKO_VAUGHAN_2013,falgas2015codegree} to get around this issue. The idea is as follows: we blow up the additional leaf such that it contains an $\varepsilon > 0$ of the vertices (e.g.\ by replacing it with an $E_{\varepsilon n}$), giving us a new close-to-optimal construction $\T_\varepsilon$. We then have probability $\varepsilon$ of picking the label $\textcircled{\small 3}$ in this blown up new leaf. If we apply $\phi_{\T_\varepsilon}$ on both sides of equation~\eqref{eq:IndE5_Opt} and follow steps analogous to before, we will end with an equation of the form
		\begin{equation*}
			\varepsilon \left\langle X_2, \reviseSecond{\psi}_{\mathbf{T}_\varepsilon}(\FB)\reviseSecond{\psi}_{\mathbf{T}_\varepsilon}(\FB)^\top \right\rangle = \mathcal{O}(\varepsilon^k),
		\end{equation*}
		where $k$ is the order of change in objective function, the inducibility of $E_5$ in $\T_\varepsilon$, as $\varepsilon$ approaches zero. If $k>1$ (i.e.\ the inducibility changes \emph{less} than expected for small $\varepsilon$), then $\reviseSecond{\psi}_{\tilde{E}_{\infty}}(\FB)$ has to be another kernel vector of $X_2$.
		Sadly, here we have~$k=1$, for this construction and all others we could think of. In such cases the vectors found this way can still be used to reduce the size of the SDP, but may potentially weaken the resulting bound.
	\end{remark}
}

\paragraph{The exact certificate.} We have already reduced the row spaces of both $X_1$ and $X_2$ to one-dimensional subspaces, and eliminated the slack variables $x_S$. It is straightforward to find an exact certificate
\begin{equation*}
	\frac{2}{3}\varnothing - E_5 = \frac{1}{3} \left\llbracket\left(\begin{minipage}{0.05\textwidth}
			\centering
			\resizebox{\textwidth}{!}{%
				\begin{forest}
					[, treeNodeRoot
							[, treeNode]
							[, treeNodeInner
									[1, treeNodeLabeled],
								[, treeNode]
							]
					]
				\end{forest}
			}
		\end{minipage}
	-2\cdot
	\begin{minipage}{0.05\textwidth}
			\centering
			\resizebox{\textwidth}{!}{%
				\begin{forest}
					[, treeNodeRoot
							[1, treeNodeLabeled]
							[, treeNodeInner
									[, treeNode],
								[, treeNode]
							]
					]
				\end{forest}
			}
		\end{minipage}\right)^2\right\rrbracket
	+ 2 \left\llbracket\left(
	\begin{minipage}{0.05\textwidth}
			\centering
			\resizebox{\textwidth}{!}{%
				\begin{forest}
					[, treeNodeRoot
							[1, treeNodeLabeled]
							[, treeNodeInner
									[3, treeNodeLabeled],
								[, treeNodeInner
											[2, treeNodeLabeled],
										[, treeNode],
									]
							]
					]
				\end{forest}
			}
		\end{minipage}
	-\begin{minipage}{0.05\textwidth}
			\centering
			\resizebox{\textwidth}{!}{%
				\begin{forest}
					[, treeNodeRoot
							[, treeNodeInner
									[1, treeNodeLabeled]
									[, treeNode],
							]
							[, treeNodeInner
									[2, treeNodeLabeled],
								[3, treeNodeLabeled],
							]
					]
				\end{forest}
			}
		\end{minipage}
	+\begin{minipage}{0.05\textwidth}
			\centering
			\resizebox{\textwidth}{!}{%
				\begin{forest}
					[, treeNodeRoot
							[1, treeNodeLabeled]
							[, treeNodeInner
									[2, treeNodeLabeled],
								[, treeNodeInner
											[3, treeNodeLabeled],
										[, treeNode],
									]
							]
					]
				\end{forest}
			}
		\end{minipage}
	+\begin{minipage}{0.05\textwidth}
			\centering
			\resizebox{\textwidth}{!}{%
				\begin{forest}
					[, treeNodeRoot
							[1, treeNodeLabeled]
							[, treeNodeInner
									[, treeNode],
								[, treeNodeInner
											[2, treeNodeLabeled],
										[3, treeNodeLabeled],
									]
							]
					]
				\end{forest}
			}
		\end{minipage}
	\right)^2\right\rrbracket.
\end{equation*}
This proves $I(E_5) = \frac{2}{3}$.

\paragraph{Dual Kernel.} The exact certificate also gives us information on all tree limits with $\phi_\mathcal{T}(E_5)=\frac{2}{3}$. By complementary slackness, with the same argument as above, we find that, for each such $\T$, we have
\begin{equation*}
	\begin{minipage}{0.05\textwidth}
		\centering
		\resizebox{\textwidth}{!}{%
			\begin{forest}
				[, treeNodeRoot
						[, treeNode]
						[, treeNodeInner
								[1, treeNodeLabeled],
							[, treeNode]
						]
				]
			\end{forest}
		}
	\end{minipage}
	=2\cdot
	\begin{minipage}{0.05\textwidth}
		\centering
		\resizebox{\textwidth}{!}{%
			\begin{forest}
				[, treeNodeRoot
						[1, treeNodeLabeled]
						[, treeNodeInner
								[, treeNode],
							[, treeNode]
						]
				]
			\end{forest}
		}
	\end{minipage}\quad\text{and}\quad
	\begin{minipage}{0.05\textwidth}
		\centering
		\resizebox{\textwidth}{!}{%
			\begin{forest}
				[, treeNodeRoot
						[1, treeNodeLabeled]
						[, treeNodeInner
								[3, treeNodeLabeled],
							[, treeNodeInner
										[2, treeNodeLabeled],
									[, treeNode],
								]
						]
				]
			\end{forest}
		}
	\end{minipage}
	+\begin{minipage}{0.05\textwidth}
		\centering
		\resizebox{\textwidth}{!}{%
			\begin{forest}
				[, treeNodeRoot
						[1, treeNodeLabeled]
						[, treeNodeInner
								[2, treeNodeLabeled],
							[, treeNodeInner
										[3, treeNodeLabeled],
									[, treeNode],
								]
						]
				]
			\end{forest}
		}
	\end{minipage}
	+\begin{minipage}{0.05\textwidth}
		\centering
		\resizebox{\textwidth}{!}{%
			\begin{forest}
				[, treeNodeRoot
						[1, treeNodeLabeled]
						[, treeNodeInner
								[, treeNode],
							[, treeNodeInner
										[2, treeNodeLabeled],
									[3, treeNodeLabeled],
								]
						]
				]
			\end{forest}
		}
	\end{minipage}
	=\begin{minipage}{0.05\textwidth}
		\centering
		\resizebox{\textwidth}{!}{%
			\begin{forest}
				[, treeNodeRoot
						[, treeNodeInner
								[1, treeNodeLabeled]
								[, treeNode],
						]
						[, treeNodeInner
								[2, treeNodeLabeled],
							[3, treeNodeLabeled],
						]
				]
			\end{forest}
		}
	\end{minipage}
\end{equation*}
\reviseSecond{almost surely for random placements of labels in $\T$.}


While we do not attempt a formal proof here, it appears reasonable to expect that these equations imply any optimal construction must be an even tree, up to a vanishing error as the number of leaves tends to infinity. A similar approach was recently employed to verify the uniqueness of the $D_4$-root system~\cite{Laat2024}.
Moreover, one might aim to establish a stability result: any nearly optimal construction $\T$ must be \emph{close} to the even tree, in a manner analogous to the stability phenomena first studied by Simonovits~\cite{simonovits1968method} and later extended to the flag algebra framework by Pikhurko~\cite{PIKHURKO20111142}. A full formalization and detailed proof of these ideas are left for future work.

\subsubsection{\revise{Exact bounds}}\label{sec:InducExact}

\revise{

	\begin{table}[ht!]
		\revise{
			\centering
			\begin{tabular}{@{}cccc@{}}
				\toprule
				Tree                                                           & Inducibility     & Maximizer                                  & Sharp bound \\ \midrule
				\vspace{.25cm}
				$\cat{4}$                                                      & $1$              & $\cat{\infty}$                             & $I_4$       \\\vspace{.25cm}
				$E_4$                                                          & $\frac{3}{7}$    & $E_\infty$                                 & $I_4$       \\\vspace{.25cm}
				$E_5$                                                          & $\frac{2}{3}$    & $E_\infty$                                 & $I_5$       \\\vspace{.25cm}
				$E_6$                                                          & $\frac{10}{31}$  & $E_\infty$                                 & $I_6$       \\\vspace{.25cm}
				$E_7$                                                          & $\frac{5}{21}$   & $E_\infty$                                 & $I_7$       \\\vspace{.25cm}
				$E_8$                                                          & $\frac{45}{889}$ & $E_\infty$                                 & $I_8$       \\\vspace{.25cm}
				$E_9$                                                          & $\frac{12}{85}$  & $E_\infty$                                 & $I_9$       \\\vspace{.25cm}
				$E_{10}$                                                       & $\frac{8}{73}$   & $E_\infty$                                 & $I_{10}$    \\
				\midrule
				\begin{minipage}{0.1\textwidth}
					\centering
					\resizebox{.7\textwidth}{!}{%
						\begin{forest}
							[, treeNodeRoot
									[, treeNodeInner[, treeNode][, treeNode]]
									[, treeNodeInner[, treeNode]
											[, treeNodeInner[, treeNode][, treeNodeInner[, treeNode],[, treeNode]]]]]
						\end{forest}}\end{minipage} & $\frac{15}{32}$  & $\dcat{\frac{1}{2}}{\frac{1}{2}}_{\infty}$ & $I_7$                           \\\vspace{.25cm}
				\begin{minipage}{0.1\textwidth}
					\centering
					\resizebox{.7\textwidth}{!}{%
						\begin{forest}
							[, treeNodeRoot
									[, treeNodeInner[, treeNode][, treeNode]]
									[, treeNodeInner[, treeNodeInner[, treeNode][, treeNode]][, treeNodeInner[, treeNode][, treeNode]]]]
						\end{forest}}\end{minipage} & $\frac{45}{217}$ & $E_\infty$                                 & $I_6$                           \\
				\bottomrule
			\end{tabular}
			\caption{Exact certificates: Recovered results from~\cite{wagner2017}, and two new inducibilities.}
			\label{tab:ind_bounds_exact}
		}
	\end{table}
}

We recall the exact inducibilities obtained by Czabarka et al.~\cite{wagner2017}: all caterpillar trees have inducibility $1$, and each even tree \revise{$E_k$} of size $k$ has inducibility $I(E_k)=k!\cdot c_k$, where
\begin{align*}
	c_{2s}   & = \frac{c_s^2}{2^{2s}-2},      &
	c_{2s+1} & = \frac{c_s c_{s+1}}{2^{2s}-1}
\end{align*}
\revise{and $c_0=c_1=1$.}
\revise{Note that this maximum density of $E_k$ is attained for the \emph{infinite even tree} $E_\infty = (E_1, E_2, E_3, E_4,\ldots)$}.

\revise{Table~\ref{tab:ind_bounds_exact} lists the known inducibilities of trees that we recovered (excluding the larger caterpillar trees), along with the hierarchy level at which each bound is attained. The certificates were rounded analogously to Section~\ref{sec:inducibilityEFive} semi-automatically using the ClusteredLowRankSolver~\cite{leijenhorstSolvingClusteredLowrank2024}, which implements the rounding procedure developed in~\cite{cohn2024optimalitysphericalcodesexact}. The last two inducibilities of trees of size six are new: we give the exact sums-of-squares certificates as ancillary files. It is straightfoward to verify the optimal constructions $E_\infty$ and the infinite balanced double caterpillar $\dcat{1/2}{1/2}_{\infty}$ (defined in detail in Section~\ref{ssec:ResultsTreeProfiles}), so we will omit the computations here.}

\revisedelete{
	We show in Table~\ref{tab:ind_bounds} the bounds on the inducibility that we obtain at levels $10$ (in column $I_{10}$) and $11$ (in column $I_{10}$) of the hierarchy for the inducibilities of all non-trivial trees up to size $6$ (excluding tree \begin{minipage}{0.05\textwidth}
		\centering
		\resizebox{\textwidth}{!}{%
			\begin{forest}
				[, treeNodeRoot
						[, treeNode]
						[, treeNodeInner[, treeNodeInner[, treeNode][, treeNode]]
								[, treeNodeInner[, treeNode][, treeNode]]]]
			\end{forest}}\end{minipage} for which we give more detailed results below). We also include our bounds for the even trees up to size $10$, and compare them to their inducibility. Note that in some cases $I_{11}(T) > I_{10}(T)$, which is due to the SDP solver not solving the SDP exactly, and the rounding procedure described in Section~\ref{sec:comp} applied after.

	We note that level $10$ of the hierarchy already allows us to recover known inducibilities up to a precision of $10^{-5}$.
}



\subsubsection{\revise{The remaining three trees with up to six leaves}}

The tree \begin{minipage}{0.05\textwidth}
	\centering
	\resizebox{\textwidth}{!}{%
		\begin{forest}
			[, treeNodeRoot
					[, treeNode]
					[, treeNodeInner[, treeNodeInner[, treeNode][, treeNode]]
							[, treeNodeInner[, treeNode][, treeNode]]]]
		\end{forest}}\end{minipage}
has proven very challenging to study. Its inducibility is not yet known, but some bounds have been obtained algorithmically by Dossou-Olory and Wagner~\cite{DossouOlory2018}, improving results from~\cite{wagner2017}. They are as follows
\begin{align}
	0.247071 \leq I\left(\begin{minipage}{0.05\textwidth}
			                     \resizebox{\textwidth}{!}{%
				                     \begin{forest}
					[, treeNodeRoot
							[, treeNode]
							[, treeNodeInner[, treeNodeInner[, treeNode][, treeNode]]
									[, treeNodeInner[, treeNode][, treeNode]]]]
				\end{forest}} \end{minipage}\right) \leq \frac{32828685715097}{132667832500200} \approx 0.247450\,.
	\label{eq:boundStephanIrrTree}
\end{align}

In Table~\ref{tab:irrTree_bounds}, we present the upper bounds we obtain for the inducibility of \begin{minipage}{0.05\textwidth}
	\centering
	\resizebox{\textwidth}{!}{%
		\begin{forest}
			[, treeNodeRoot
					[, treeNode]
					[, treeNodeInner[, treeNodeInner[, treeNode][, treeNode]]
							[, treeNodeInner[, treeNode][, treeNode]]]]
		\end{forest}}\end{minipage}
at each level of the SDP hierarchy between $5$ and $11$, where the ones improving~\eqref{eq:boundStephanIrrTree} are marked in bold. \revise{We also give bounds for the remaining two trees with six leaves with unknown inducibility. For each of the three trees, the bounds decrease strictly with each level of the hierarchy. It seems unlikely that our strongest bounds are exact.}
\begin{table}[ht!]
	\centering
	\begin{tabular}{@{}cccc@{}}
		\toprule
		      & \multicolumn{3}{c}{Upper bounds}                         \\
		Level &
		\begin{minipage}{0.05\textwidth}
			\centering
			\resizebox{\textwidth}{!}{%
				\begin{forest}
					[, treeNodeRoot
							[, treeNode]
							[, treeNodeInner[, treeNodeInner[, treeNode][, treeNode]]
									[, treeNodeInner[, treeNode][, treeNode]]]]
				\end{forest}}\end{minipage}
		      &
		\begin{minipage}{0.1\textwidth}
			\resizebox{.7\textwidth}{!}{%
				\begin{forest}
					[, treeNodeRoot
							[, treeNode]
							[, treeNodeInner[, treeNodeInner[, treeNode],[, treeNode]]
									[, treeNodeInner[, treeNode],[, treeNodeInner[, treeNode],[, treeNode]]]]]
				\end{forest}}\end{minipage}
		      &
		\begin{minipage}{0.1\textwidth}
			\resizebox{.6\textwidth}{!}{%
				\begin{forest}
					[, treeNodeRoot
							[, treeNode]
							[, treeNodeInner[, treeNode][, treeNodeInner[, treeNodeInner[, treeNode],[, treeNode]][, treeNodeInner[, treeNode],[, treeNode]]]]] \end{forest}}\end{minipage}
		\\ \midrule
		$5$   & $0.3333335$                      & -         & -         \\
		$6$   & $0.2602938$                      & 0.3760761 & 0.2336372 \\
		$7$   & $0.2506628$                      & 0.3492771 & 0.1987351 \\
		$8$   & $0.2476918$                      & 0.3422886 & 0.1928592 \\
		$9$   & ${\mathbf{0.2471867}}$         & 0.3412286 & 0.1916752 \\
		$10$  & ${\mathbf{0.2471585}}$         & 0.3411696 & 0.1914929 \\
		$11$  & ${\mathbf{0.2471566}}$         & 0.3411657 & 0.1914539 \\ \bottomrule
	\end{tabular}
	\caption{Bounds up to $I_{11}$ for the three open cases with up to six leaves
	}
	\label{tab:irrTree_bounds}
\end{table}

We see that we are able to improve the previous best bound from level $9$ upwards. With level $11$, the bound we obtain allows us to reduce substantially the optimality gap in~\eqref{eq:boundStephanIrrTree}: from $37.9\cdot 10^{-5}$ to $8.5\cdot 10^{-5}$.

In both~\cite{wagner2017} and~\cite{DossouOlory2018}, the authors \revise{raised} the possibility that this tree may have an irrational inducibility - if rational, it would have to have a denominator of at least $89$. Proving this could be possible using our flag algebra software, and would answer the still open question of the possible irrationality of the inducibility of a tree.

\subsection{Tree-profiles}
\label{ssec:ResultsTreeProfiles}

We can compute a hierarchy of outer approximations of profiles of trees $\profile (T,S)\subseteq [0,1]^2$. The idea is the following: We partition the interval $[\min T, \max T]$ into smaller intervals
\begin{equation*}
	[\min T, \max T] = [a_1=\min T,a_2] \cup [a_2, a_3] \cup\ldots\cup [a_{n-1}, a_n=\max T].
\end{equation*}
On each interval $[a_i, a_{i+1}]$ we can then compute functions $f_{\mathrm{\scriptsize{lower}}}$ and $f_{\mathrm{\scriptsize{upper}}}$ which lower- (resp.\ upper-) bound the slice
\begin{equation*}
	\profile (T,S) \cap \left([a_i, a_{i+1}] \times \R\right),
\end{equation*}
in the sense that $S - f_{\mathrm{\scriptsize{lower}}}(T) \geq 0$ and $f_{\mathrm{\scriptsize{upper}}}(T) - S\geq 0$ if $T\in[a_i,a_{i+1}]$. We can optimize over (a suitable family of) integrable functions, trying to match them as closely as possible to the profile:
\begin{align*}
	\max_{f_{\mathrm{\scriptsize{lower}}}} \enspace & \int_{a_i}^{a_{i+1}} f_{\mathrm{\scriptsize{lower}}}(x) \,dx                 & \min_{f_{\mathrm{\scriptsize{upper}}}} \enspace & \int_{a_i}^{a_{i+1}} f_{\mathrm{\scriptsize{upper}}}(x) \,dx                \\
	\text{s.t.}\enspace                             & S - f_{\mathrm{\scriptsize{lower}}}(T) \geq 0 \text{ if } T\in[a_i,a_{i+1}]. & \text{s.t.}\enspace                             & f_{\mathrm{\scriptsize{upper}}}(T) - S\geq 0 \text{ if } T\in[a_i,a_{i+1}].
\end{align*}

Note that $f(T)\in \A^\varnothing$ if $f$ is a polynomial in $\R[x]$. While we can compute a high enough level of the hierarchy to use quadratic functions if $T$ has less than $6$ leaves, in practice it does not seem to result in much better approximations when compared to linear functions. Thus, we compute linear upper and lower bounds $f_{\mathrm{\scriptsize{lower}}}(x) = ax+b,f_{\mathrm{\scriptsize{upper}}}(x)=cx+d$ on the (sliced) profile by approximating
\begin{align*} 
	\max_{a,b} \enspace & \frac{1}{2}a(a_{i+1}^2-a_i^2) + b(a_{i+1}-a_i)  & \min_{c,d} \enspace & \frac{1}{2}c(a_{i+1}^2-a_i^2) + d(a_{i+1}-a_i) \\
	\text{s.t.}\enspace & S - aT - b \geq 0\text{ if } T\in[a_i,a_{i+1}]. & \text{s.t.}\enspace & cT+d-S \geq 0\text{ if } T\in[a_i,a_{i+1}].
\end{align*}

This kind of problem is an instance of the flag algebraic analogue to the \emph{generalized problem of moments}, see for example the dual formulation (equation 4) in the survey \cite{Klerk2019}.

We can relax this problem (following the outer approximation in \cite{Klerk2019}) to an SDP by replacing the nonnegativity constraint by being an element of the \emph{truncated quadratic module} generated by $T - a_i\geq 0$ and $a_{i+1}-T\geq 0$
\begin{equation*}
	M_L(T - a_i, a_{i+1}-T)	\coloneqq \left \lbrace s_0 + (T - a_i)s_1 + (a_{i+1}-T)s_2\mid s_0\in\mathrm{SOS}_L,  \ s_1,s_2\in \mathrm{SOS}_{L-|T|} \right\rbrace.
\end{equation*}
Note that, by definition, the elements of $M_L(T - a_i, a_{i+1}-T)\subseteq \A^\varnothing$ are nonnegative on sequences of trees whose $T$ density lies in $[a_i, a_{i+1}]$. Optimizing over the quadratic module amounts to solving an SDP. Here $M_L(T - a_i, a_{i+1}-T)$ was truncated such that it only contains quantum trees with at most $L$ leaves.

What we call level $L$ of the outer approximation of the $(T,S)$-profile is the piece-wise linear bound obtained from solving all pairs of SDPs for each slice of the profile:
\begin{align*} 
	\max_{a,b} \enspace & \frac{1}{2}a(a_{i+1}^2-a_i^2) + b(a_{i+1}-a_i) & \min_{c,d} \enspace & \frac{1}{2}c(a_{i+1}^2-a_i^2) + d(a_{i+1}-a_i) \\
	\text{s.t.}\enspace & S - aT - b \in M_L(T - a_i, a_{i+1}-T).        & \text{s.t.}\enspace & cT+d-S \in M_L(T - a_i, a_{i+1}-T).
\end{align*}
In all profile approximations pictured in this paper we sliced them into 100 sections.

We provide in Appendix~\ref{app:profiles} eight particularly interesting outer approximations of tree-profiles, where several level of the hierarchy are represented. We will now study three of them more thoroughly.

\paragraph{Outer approximations of the tree-profiles of the caterpillar trees of sizes 4 to 6 versus the even tree of size 6.}

We turn our attention to the tree-profiles of $\cat{k}$, the caterpillar tree of size $k$, and $E_6$, the even tree of size $6$, for $k \in \{4,5,6\}$. The four trees involved are recalled in Figure~\ref{fig:CatAndE}.

\begin{figure}[ht!]
	\centering
	\begin{subfigure}{.2\textwidth}
		\centering
		\includegraphics[width=.35\linewidth]{Pictures/Tree-6.pdf}
		\caption{$\cat{4}$}
	\end{subfigure}%
	\begin{subfigure}{.2\textwidth}
		\centering
		\includegraphics[width=.35\linewidth]{Pictures/Tree-8.pdf}
		\caption{$\cat{5}$}
	\end{subfigure}%
	\begin{subfigure}{.2\textwidth}
		\centering
		\includegraphics[width=.35\linewidth]{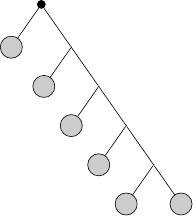}
		\caption{$\cat{6}$}
	\end{subfigure}%
	\begin{subfigure}{.2\textwidth}
		\centering
		\includegraphics[width=.45\linewidth]{Pictures/Tree-12.pdf}
		\caption{$E_6$}
	\end{subfigure}
	\caption{The caterpillar trees of sizes $4$, $5$ and $6$ and the even tree of size $6$}
	\label{fig:CatAndE}
\end{figure}

The three outer approximations of the tree-profiles of $\cat{k}$ and $E_6$ for $k \in \{4,5,6\}$ are represented in the first row of Figure~\ref{fig:profilesCatE}. Each level of the SDP hierarchy from $6$ to $11$ is drawn, the approximation becoming tighter as the level increases. We can thus clearly visualize the improvement brought the levels of the hierarchy, and note that they all seem to converge to tree-profiles with similar characteristics. The upper boundary appears to have two parts: we will give a conjecture for the expression of the right part. The right part of the upper bound turns non-convex and appears (numerically) sharp at levels $8$, $9$ and $10$, respectively. The lower boundary, however, seems more complex: the approximations are composed of multiple different sections (\revise{some of which} may not be visible yet), and each level of the hierarchy is able to substantially improve the approximation. This behavior is made obvious when plotting separately the improvement of the lower approximation with each level of the hierarchy. In each figure of the second row of Figure~\ref{fig:profilesCatE} are represented the respective gaps between the lower boundary obtained at levels $7$, $8$ and $8$ respectively of the hierarchy and those obtained at each level from $6$ to $11$. The leftmost section of each lowerbound appears (close to) linear, and is followed by (at least) two "scallops". As soon as the level reaches $7$,$8$ and $8$ respectively, the bound appears to stabilize at the point where the two scallops meet, suggesting existence of a "nice" extremal configuration. In the scallops themselves, the bounds improve strictly at every level, but \revise{numerical issues become more visible}, making it hard to claim anything specific. The apparent complexity of approximating these lower boundaries suggests that more advanced strategies (such as the \emph{variational methods} Razborov used for the triangle-edge profile~\cite{RAZBOROV2008}) may be necessary to find exact bounds.

\begin{figure}[ht!]
	\begin{center}
		\includegraphics[width=1.0\linewidth]{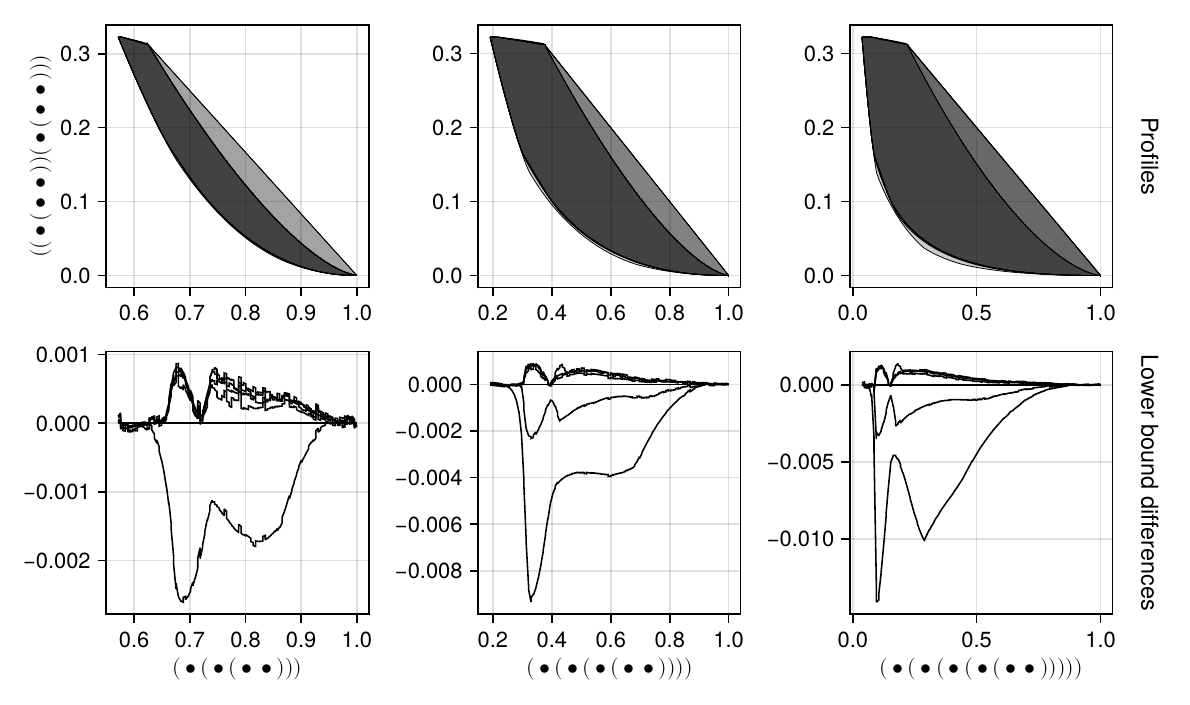}
		\vspace{-.5cm}
		\caption{Tree-profiles and lower bound differences of $\cat{4}$, $\cat{5}$ and $\cat{6}$ versus $E_6$}
		\label{fig:profilesCatE}
	\end{center}
\end{figure}

We will now formulate a conjecture for the right part of the upper boundary of each of these tree-profiles and prove some points of interest on these boundaries, depicted in Figure~\ref{fig:profileMarked} for $\cat{4}$.

\begin{figure}[ht!]
	\centering
	\includegraphics[width=.7\linewidth]{Pictures/profile_marked5_14.pdf}
	\caption{Tree-profile of $\cat{4}$ and $E_6$}
	\label{fig:profileMarked}
\end{figure}

\paragraph{Point 1: Right-most corner of the upper boundary.}
In each of these profiles, we can easily identify the known point $(1,0)$ corresponding to the infinite caterpillar $\cat{\infty}$, in which every finite caterpillar has density $1$ and all other trees have density $0$.

\paragraph{Right part of the upper boundary: the infinite double caterpillar.}

Let us denote \revise{by} $\dcat{p}{q}_n$ the double caterpillar of size $n$, composed of two caterpillars of size $n$ joined at the root with ratio $(p,q)$, i.e., one of the sides is a caterpillar of size $\lfloor pn\rfloor$ and the other is a caterpillar of size $\lceil qn \rceil$, with $p+q =1$.
\begin{figure}[ht!]
	\centering
	\includegraphics[width=.15\linewidth]{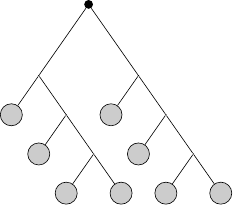}
	\caption{$\dcat{\frac{1}{2}}{\frac{1}{2}}_8$: the double caterpillar of size $8$ with ratio $(\frac{1}{2},\frac{1}{2})$}
\end{figure}

For any $k \geq 1$, the subtree density of $\cat{k}$ in $\dcat{p}{q}_n$ is
\begin{equation*}p(\cat{k}, \dcat{p}{q}_n) = \frac{\binom{\lfloor pn\rfloor}{k} + \binom{\lceil qn \rceil}{k} + \lfloor pn\rfloor\cdot \binom{\lceil qn \rceil}{k-1} + \lceil qn \rceil \cdot \binom{\lfloor pn\rfloor}{k-1}}{\binom{n}{k}}.\end{equation*}

Indeed, among the $\binom{n}{k}$ ways to pick a subset of leaves of size $k$ of $\dcat{p}{q}_n$, the ones inducing a tree isomorphic to $\cat{k}$ are: \begin{itemize}
	\item the $\binom{\lfloor pn\rfloor}{k} + \binom{\lceil qn \rceil}{k}$ ways of picking all $k$ leaves from the same side of the double caterpillar;
	\item and the $\lfloor pn\rfloor\cdot \binom{\lceil qn \rceil}{k-1} + \lceil qn \rceil \cdot \binom{\lfloor pn\rfloor}{k-1}$ ways of picking $(k-1)$ leaves on the same side of the double caterpillar, and the last leaf on the opposite side.
\end{itemize}

As $n$ tends to infinity, we obtain that \begin{align}
	\label{eq:UB_catk}
	\begin{split}
		p(\cat{k}, \dcat{p}{q}_{\infty}) & = \revise{\lim\limits_{n \to \infty}}\frac{\frac{p^kn^k}{k!} + \frac{q^kn^k}{k!} + \frac{pq^{k-1}n^k}{(k-1)!}+ \frac{p^{k-1}qn^k}{(k-1)!}}{\frac{n^k}{k!}} \\
		                                 & = p^k + q^k + k(pq^{k-1} + p^{k-1}q),
	\end{split}
\end{align}
where $\dcat{p}{q}_{\infty} = (\dcat{p}{q}_n)_{n\geq 2}$ is the increasing sequence of double caterpillars with ratio $(p,q)$.

The subtree density of $E_6$ in $\dcat{p}{q}_n$ is
\begin{equation*}p(E_6, \dcat{p}{q}_n) =  \frac{\binom{\lfloor pn\rfloor}{3}\binom{\lceil qn \rceil}{3}}{\binom{n}{6}}.
\end{equation*}
Indeed, the only way to pick a subset of $6$ leaves in $\dcat{p}{q}_n$ inducing a tree isomorphic to $E_6$, is to pick $3$ leaves on each side of $\dcat{p}{q}_n$.
We can thus compute the density of $E_6$ in $\dcat{p}{q}_{\infty}$:\begin{align}
	\label{eq:UB_E6}
	\begin{split}
		p(E_6, \dcat{p}{q}_{\infty}) & = \revise{\lim\limits_{n \to \infty}}\frac{\frac{p^3q^3n^6}{(3!)^2}}{\frac{n^6}{6!}} \\
		                             & = 20p^3q^3.
	\end{split}
\end{align}

\begin{conjecture}
	\label{conj:upperBoundary}
	$\dcat{p}{1-p}_\infty$ lies on the upper boundary of the tree-profile of $\cat{k}$ and $E_6$ for all values of $k\geq 4$ and $p\in [0,1]$.
\end{conjecture}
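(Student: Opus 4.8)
The plan is to fix $k\ge 4$, use the symmetry $p\leftrightarrow 1-p$ of $\dcat{p}{1-p}_\infty$ to reduce to $p\in[\tfrac12,1]$, and then separate the two halves of the assertion. Writing $q=1-p$ and, following \eqref{eq:UB_catk}--\eqref{eq:UB_E6},
\begin{equation*}
  x_k(p)\coloneqq p^k+q^k+k\bigl(pq^{k-1}+p^{k-1}q\bigr),\qquad y(p)\coloneqq 20\,p^3q^3,
\end{equation*}
the sequences $\dcat{p}{1-p}_\infty$ already witness that every point $(x_k(p),y(p))$ lies \emph{in} $\profile(\cat{k},E_6)$, so all that remains is the upper bound: for every increasing sequence $\T$ with $\phi_{\T}(\cat{k})=x_k(p)$ one must show $\phi_{\T}(E_6)\le y(p)$. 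A quick computation gives $x_k'(p)=k(k-1)\,pq\,(p^{k-3}-q^{k-3})>0$ on $(\tfrac12,1)$ for $k\ge4$ (and $x_3\equiv 1$, which is why $k\ge4$ is needed), so $p\mapsto x_k(p)$ is a bijection onto $[x_k(\tfrac12),1]$ and there is a well-defined function $h_k$ with $h_k(x_k(p))=y(p)$; the goal becomes the single flag-algebraic inequality $E_6\le h_k(\cat{k})$ on $\cat{k}\in[x_k(\tfrac12),1]$, with equality along the double-caterpillar sequences.

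To prove this I would use the localized sum-of-squares machinery of Section~\ref{sec:computation}. Substituting $u=pq$ and Newton's identities shows $x_k$ and $y$ are polynomials in $u\in[0,\tfrac14]$, so $h_k$ is an algebraic but (for $k\ge 6$) non-polynomial function of $\cat{k}$; a single global polynomial SOS is therefore not available. If the right branch of the boundary were \emph{concave}, one could finish by producing, for each $p$, a slope $\lambda_k(p)\ge 0$ and a certificate $\Sigma_p\in\mathrm{SOS}_L$ with
\begin{equation*}
  \lambda_k(p)\,\cat{k}+E_6+\Sigma_p=\lambda_k(p)\,x_k(p)+y(p)\quad\text{in }\A^\varnothing,
\end{equation*}
a supporting line of the profile touching it exactly at $(x_k(p),y(p))$, complementary slackness forcing $\Sigma_p$ to vanish on $\dcat{p}{1-p}_\infty$. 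Because the paper observes that this branch is \emph{non-convex}, such purely linear certificates cannot exist for every $p$: over the concave stretch one must instead localize genuinely, e.g.\ certify $h_k(\cat{k})-E_6\in s_0+(\cat{k}-x_k(p_1))\,s_1+(x_k(p_0)-\cat{k})\,s_2$ on a sub-interval $[x_k(p_0),x_k(p_1)]$ with $s_i$ sums of squares and $h_k$ replaced by a suitable higher-degree polynomial majorant, or use bilinear multipliers coupling two values of $\cat{k}$ at once.

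To actually obtain these certificates — extending the finitely many exact profile points pinned down in Section~\ref{sec:results} to the whole family — I would run the flag SOS hierarchy \emph{parametrically}: treat $u=p(1-p)$ as an indeterminate, write the entries of the block matrices $M_\sigma$ as rational functions of $u$, impose the complementary-slackness equations forcing equality on the conjectured extremal sequences $\dcat{p}{1-p}_\infty$ (and on the caterpillar endpoint $(1,0)$, where the branch meets the right-most corner), and solve the resulting parametric semidefinite system; one must then \emph{prove} $M_\sigma(u)\succcurlyeq0$ for all $u$ in range, uniformly in $k$, via a parametric eigenvalue / Sturm-sequence estimate. A self-contained alternative, avoiding flag algebras, is a stability argument in the spirit of Razborov's treatment of the triangle--edge profile~\cite{RAZBOROV2008}: show that any $\T$ realizing a boundary point of the right branch must be asymptotically a double caterpillar, then read off $\phi_{\T}(E_6)$ directly.

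The main obstacle is the non-convexity of the boundary: it rules out the simplest one-linear-functional-per-point certificate and forces either localized higher-degree polynomial bounds, honestly multilinear multipliers, or a full variational description of the extremal sequences. A second, independent difficulty is uniformity in $k$: since the claim is for all $k\ge4$, no finite SDP can settle it, and the $k$-dependence of both the certificate and the positivity proof for the $M_\sigma(u)$ must be controlled in closed form or by an induction on the caterpillar size. I expect most of the work — and the real risk — to sit in that uniform positive-semidefiniteness verification.
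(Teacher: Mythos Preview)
The statement you are asked to prove is labeled a \emph{Conjecture} in the paper, and the paper does not prove it. Immediately after stating it, the authors write that for each individual $p$ they can produce a (numerically exact) sum-of-squares certificate that $(x_k(p),y(p))$ lies on the boundary, but that ``in the absence of information about the algebraic degree of the curve, this is not enough to prove Conjecture~\ref{conj:upperBoundary}, for we would need a parametrized family of sum of squares certificates for each point.'' There is therefore no proof in the paper to compare your attempt against.

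Your write-up is not a proof either, and you are candid about this: it is a research programme. The reductions you make (symmetry $p\leftrightarrow 1-p$; the derivative computation $x_k'(p)=k(k-1)pq(p^{k-3}-q^{k-3})$ giving monotonicity on $[\tfrac12,1]$ for $k\ge 4$; the observation that $x_k,y$ are polynomials in $u=pq$) are correct and useful, and your diagnosis of the obstruction matches the paper's: one would need a \emph{parametric} family of SOS certificates in $u$, together with a proof that the associated matrices stay positive semidefinite for all $u\in[0,\tfrac14]$, and --- since the claim is for all $k\ge 4$ --- uniformly in $k$. Your remark that a single linear supporting functional cannot work is also correct; for $k=4$ one checks $x_4=1-6u^2$, $y=20u^3$, so $y=20\cdot 6^{-3/2}(1-x)^{3/2}$ is a \emph{convex} function of $x$, and no line touching the curve at an interior point lies above it. But none of the proposed fixes (localized quadratic-module certificates, parametric SDP with complementary slackness, or a Razborov-style stability argument) is carried out, and each is a substantial open-ended task. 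In short: your outline is consistent with what the authors themselves identify as the missing ingredient, but the conjecture remains open in the paper and your proposal does not close it.
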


For every point of this curve, we are able to obtain a (numerically exact) sum of squares certificate that it indeed lies on the boundary of the profile. However, in the absence of information about the algebraic degree of the curve, this is not enough to prove Conjecture~\ref{conj:upperBoundary}, for we would need a parametrized family of sum of squares certificates for each point.

\paragraph{Point 2: Middle corner of the upper boundary.}

For each $k \in \{4,5,6\}$, we will now prove one point on the upper boundary of the tree-profile of $\cat{k}$ and $E_6$: the points $(\frac{5}{8}, \frac{5}{16})$ for $k=4$, $(\frac{3}{8},\frac{5}{16})$ for $k=5$ and $(\frac{7}{32},\frac{5}{16})$ for $k=6$. To do so, we will first show that each point is part of the profile corresponding to the densities of $\cat{k}$ and $E_6$ in the infinite double caterpillar; then we will provide a sum of squares certificate proving that they indeed lie on the upper boundary.

We can apply our previous results to compute the densities of $\cat{4}$, $\cat{5}$, $\cat{6}$ and $E_6$ in the ``balanced'' infinite double caterpillar, i.e. the infinite double caterpillar with ratio $(\frac{1}{2}, \frac{1}{2})$. In this case, we obtain from~\eqref{eq:UB_catk}
\begin{align*}
	p(\cat{k}, \dcat{\frac{1}{2}}{\frac{1}{2}}_{\infty}) & = \frac{k+1}{2^{k-1}}.
\end{align*}

We thus have
\begin{align*}
	p(\cat{4},\dcat{\frac{1}{2}}{\frac{1}{2}}_{\infty}) & = \frac{5}{8}  = 0.625,    \\
	p(\cat{5},\dcat{\frac{1}{2}}{\frac{1}{2}}_{\infty}) & = \frac{3}{8}  = 0.375,    \\
	p(\cat{6},\dcat{\frac{1}{2}}{\frac{1}{2}}_{\infty}) & = \frac{7}{32}  = 0.21875,
\end{align*}
and, following from~\eqref{eq:UB_E6} \begin{align*}
	p(E_6,\dcat{\frac{1}{2}}{\frac{1}{2}}_{\infty}) & = \frac{5}{16} = 0.3125.
\end{align*}

Our flag algebra software then gives us for each of these points a sum of squares certificate proving that they actually lie on the upper boundary of the tree-profiles. We detail below the certificate for $k=4$, and include (very similar) certificates for $k\in\{5,6\}$ as ancillary files.

We have
\vspace*{-.75cm}
\begin{center}
\begin{equation*}
	\Bigl\llbracket \ \Bigl( 
		2\!\begin{minipage}{0.13\textwidth}
			\centering
			\resizebox{.8\textwidth}{!}{%
			\begin{forest}
				[, treeNodeRoot
						[, treeNodeInner
								[3, treeNodeLabeled]
								[4, treeNodeLabeled]
						]
						[, treeNodeInner
							[2, treeNodeLabeled]
							[, treeNodeInner
								[1, treeNodeLabeled]
								[, treeNode]
							]
						]
				]
			\end{forest}
			}
		\end{minipage}
		\hspace{-.5cm}
		+ 2 \begin{minipage}{0.13\textwidth}
			\centering
			\resizebox{.8\textwidth}{!}{%
			\begin{forest}
				[, treeNodeRoot
						[, treeNodeInner
								[3, treeNodeLabeled]
								[4, treeNodeLabeled]
						]
						[, treeNodeInner
							[, treeNode]
							[, treeNodeInner
								[1, treeNodeLabeled]
								[2, treeNodeLabeled]
							]
						]
				]
			\end{forest}
			}
		\end{minipage}
		\hspace{-.5cm}
		+ 2	\begin{minipage}{0.13\textwidth}
			\centering
			\resizebox{.8\textwidth}{!}{%
			\begin{forest}
				[, treeNodeRoot
						[, treeNodeInner
								[3, treeNodeLabeled]
								[4, treeNodeLabeled]
						]
						[, treeNodeInner
							[1, treeNodeLabeled]
							[, treeNodeInner
								[2, treeNodeLabeled]
								[, treeNode]
							]
						]
				]
			\end{forest}
			}
		\end{minipage}
		\hspace{-.3cm}
		- 
		\hspace{-.25cm}
		\begin{minipage}{0.13\textwidth}
			\centering
			\resizebox{.8\textwidth}{!}{%
			\begin{forest}
				[, treeNodeRoot
						[, treeNodeInner
								[3, treeNodeLabeled]
								[4, treeNodeLabeled]
						]
						[, treeNodeInner
							[1, treeNodeLabeled]
							[2, treeNodeLabeled]
						]
				]
			\end{forest}
			}
		\end{minipage}
		\hspace{-.2cm}
		\Bigr)^2 \  \Bigr\rrbracket
\end{equation*}	
\begin{equation*}
	 = \ \frac{4}{15}
	\begin{minipage}{0.13\textwidth}
		\centering
		\resizebox{.8\textwidth}{!}{%
		\begin{forest}
			[, treeNodeRoot
					[, treeNodeInner
							[, treeNode]
							[, treeNode]
					]
					[, treeNodeInner
						[, treeNode]
						[, treeNodeInner
							[, treeNode]
							[, treeNodeInner
								[, treeNode]
								[, treeNode]
							]
						]
					]
			]
		\end{forest}
		}
	\end{minipage}
	\hspace{-.4cm}
	+ \ \frac{4}{15} \
	\begin{minipage}{0.12\textwidth}
		\centering
		\resizebox{.8\textwidth}{!}{%
		\begin{forest}
			[, treeNodeRoot
					[, treeNodeInner
							[, treeNode]
							[, treeNode]
					]
					[, treeNodeInner
						[, treeNodeInner
							[, treeNode]
							[, treeNode]
						]
						[, treeNodeInner
							[, treeNode]
							[, treeNode]
						]
					]
			]
		\end{forest}
		}
	\end{minipage}
	\hspace{-.3cm}
	- \ \frac{2}{5}
	\begin{minipage}{0.12\textwidth}
		\centering
		\resizebox{.8\textwidth}{!}{%
		\begin{forest}
			[, treeNodeRoot
					[, treeNodeInner
							[, treeNode]
							[, treeNode]
					]
					[, treeNodeInner
						[, treeNode]
						[, treeNodeInner
							[, treeNode]
							[, treeNode]
						]
					]
			]
		\end{forest}
		}
	\end{minipage}
	\hspace{-.3cm}
	+ \ \frac{1}{3}
	\begin{minipage}{0.1\textwidth}
		\centering
		\resizebox{.8\textwidth}{!}{%
		\begin{forest}
			[, treeNodeRoot
					[, treeNodeInner
							[, treeNode]
							[, treeNode]
					]
					[, treeNodeInner
						[, treeNode]
						[, treeNode]
					]
			]
		\end{forest}
		}
	\end{minipage}
\end{equation*}
\end{center}

We recall that all averaged products of trees-flags with up to $6$ vertices are given in Appendix~\ref{app:products}. We thus obtain that
\vspace*{-1cm}
\begin{center}
\begin{equation}
	\label{eq:sos1}
	4 \begin{minipage}{0.13\textwidth}
	   \centering
	   \resizebox{.8\textwidth}{!}{%
	   \begin{forest}
		   [, treeNodeRoot
				   [, treeNodeInner
						   [, treeNode]
						   [, treeNode]
				   ]
				   [, treeNodeInner
					   [, treeNode]
					   [, treeNodeInner
						   [, treeNode]
						   [, treeNodeInner
							   [, treeNode]
							   [, treeNode]
						   ]
					   ]
				   ]
		   ]
	   \end{forest}
	   }
   \end{minipage}
   \hspace{-.4cm}
   + \ 4 \
   \begin{minipage}{0.12\textwidth}
	   \centering
	   \resizebox{.8\textwidth}{!}{%
	   \begin{forest}
		   [, treeNodeRoot
				   [, treeNodeInner
						   [, treeNode]
						   [, treeNode]
				   ]
				   [, treeNodeInner
					   [, treeNodeInner
						   [, treeNode]
						   [, treeNode]
					   ]
					   [, treeNodeInner
						   [, treeNode]
						   [, treeNode]
					   ]
				   ]
		   ]
	   \end{forest}
	   }
   \end{minipage}
   \hspace{-.4cm}
   - \ 6
   \begin{minipage}{0.12\textwidth}
	   \centering
	   \resizebox{.8\textwidth}{!}{%
	   \begin{forest}
		   [, treeNodeRoot
				   [, treeNodeInner
						   [, treeNode]
						   [, treeNode]
				   ]
				   [, treeNodeInner
					   [, treeNode]
					   [, treeNodeInner
						   [, treeNode]
						   [, treeNode]
					   ]
				   ]
		   ]
	   \end{forest}
	   }
   \end{minipage}
   \hspace{-.3cm}
   + \ 5
   \begin{minipage}{0.1\textwidth}
	   \centering
	   \resizebox{.8\textwidth}{!}{%
	   \begin{forest}
		   [, treeNodeRoot
				   [, treeNodeInner
						   [, treeNode]
						   [, treeNode]
				   ]
				   [, treeNodeInner
					   [, treeNode]
					   [, treeNode]
				   ]
		   ]
	   \end{forest}
	   }
   \end{minipage}
   \ \geq \ 0.
\end{equation}
\end{center}

Using the following quotient relations
\vspace*{-.75cm}
\begin{center}
	\begin{equation*}
		\label{eq:quotient1}
		\begin{minipage}{0.1\textwidth}
			\centering
			\resizebox{.6\textwidth}{!}{%
			\begin{forest}
				[, treeNodeRoot
					[, treeNode]
					[, treeNodeInner
						[, treeNode]
						[, treeNode]
					]
				]
			\end{forest}
			}
		\end{minipage}
		= \begin{minipage}{0.1\textwidth}
			\centering
			\resizebox{.9\textwidth}{!}{%
			\begin{forest}
				[, treeNodeRoot
					[, treeNodeInner
						[, treeNode]
						[, treeNode]
					]
					[, treeNodeInner
						[, treeNode]
						[, treeNode]
					]
				]
			\end{forest}
			}
		\end{minipage}
		+ \begin{minipage}{0.1\textwidth}
			\centering
			\resizebox{.7\textwidth}{!}{%
			\begin{forest}
				[, treeNodeRoot
					[, treeNode]
					[, treeNodeInner
						[, treeNode]
						[, treeNodeInner
							[, treeNode]
							[, treeNode]
						]
					]
				]
			\end{forest}
			}
		\end{minipage}
	\end{equation*}
\end{center}

and 
\vspace*{-1cm}
\begin{center}
	\begin{equation*}
		\label{eq:quotient2}
		\begin{minipage}{0.12\textwidth}
			\centering
			\resizebox{.8\textwidth}{!}{%
			\begin{forest}
				[, treeNodeRoot
						[, treeNodeInner
								[, treeNode]
								[, treeNode]
						]
						[, treeNodeInner
							[, treeNode]
							[, treeNodeInner
								[, treeNode]
								[, treeNode]
							]
						]
				]
			\end{forest}
			}
		\end{minipage}
		= \frac{2}{3}
		\begin{minipage}{0.13\textwidth}
			\centering
			\resizebox{.8\textwidth}{!}{%
			\begin{forest}
				[, treeNodeRoot
						[, treeNodeInner
								[, treeNode]
								[, treeNode]
						]
						[, treeNodeInner
							[,treeNode]
							[, treeNodeInner
								[,treeNode]
								[, treeNodeInner
									[, treeNode]
									[, treeNode]
								]
							]
						]
				]
			\end{forest}
			}
		\end{minipage}
		+ \begin{minipage}{0.12\textwidth}
			\centering
			\resizebox{.8\textwidth}{!}{%
			\begin{forest}
				[, treeNodeRoot
					[, treeNodeInner
						[, treeNode]
						[, treeNodeInner
							[, treeNode]
							[, treeNode]
						]	
					]
					[, treeNodeInner
						[,treeNode]
						[, treeNodeInner
							[, treeNode]
							[, treeNode]
						]
					]
				]
			\end{forest}
			}
		\end{minipage}
		+ \frac{1}{6}\begin{minipage}{0.12\textwidth}
			\centering
			\resizebox{.8\textwidth}{!}{%
			\begin{forest}
				[, treeNodeRoot
					[, treeNode]
					[, treeNodeInner
						[,treeNodeInner
							[, treeNode]
							[, treeNode]
						]
						[, treeNodeInner
							[, treeNode]
							[, treeNodeInner
								[, treeNode]
								[, treeNode]
							]
						]
					]
				]
			\end{forest}
			}
		\end{minipage}
		+ \frac{2}{3}\begin{minipage}{0.12\textwidth}
			\centering
			\resizebox{.8\textwidth}{!}{%
			\begin{forest}
				[, treeNodeRoot
					[, treeNodeInner
							[, treeNode]
							[, treeNode]
					]
					[, treeNodeInner
						[, treeNodeInner
							[, treeNode]
							[, treeNode]
						]
						[, treeNodeInner
							[, treeNode]
							[, treeNode]
						]
					]
				]
			\end{forest}
			}
		\end{minipage}
	\end{equation*}
\end{center}

we can rewrite~\eqref{eq:sos1} as
\vspace*{-1cm}
\begin{center}
	\begin{equation*}
		5 - 5\begin{minipage}{0.1\textwidth}
			\centering
			\resizebox{.7\textwidth}{!}{%
			\begin{forest}
				[, treeNodeRoot
					[, treeNode]
					[, treeNodeInner
						[, treeNode]
						[, treeNodeInner
							[, treeNode]
							[, treeNode]
						]
					]
				]
			\end{forest}
			}
		\end{minipage}
		\hspace*{-.3cm}
		- \ 6 \begin{minipage}{0.12\textwidth}
			\centering
			\resizebox{.8\textwidth}{!}{%
			\begin{forest}
				[, treeNodeRoot
					[, treeNodeInner
						[, treeNode]
						[, treeNodeInner
							[, treeNode]
							[, treeNode]
						]	
					]
					[, treeNodeInner
						[,treeNode]
						[, treeNodeInner
							[, treeNode]
							[, treeNode]
						]
					]
				]
			\end{forest}
			}
		\end{minipage}
		\hspace*{-.2cm}
		\geq \begin{minipage}{0.12\textwidth}
			\centering
			\resizebox{.8\textwidth}{!}{%
			\begin{forest}
				[, treeNodeRoot
					[, treeNode]
					[, treeNodeInner
						[,treeNodeInner
							[, treeNode]
							[, treeNode]
						]
						[, treeNodeInner
							[, treeNode]
							[, treeNodeInner
								[, treeNode]
								[, treeNode]
							]
						]
					]
				]
			\end{forest}
			}
		\end{minipage}
		\hspace*{-.2cm}
		\geq 0,
	\end{equation*}
\end{center}
i.e.,
\vspace*{-1cm}
\begin{center}
	\begin{equation}
		\label{eq:finalSOS}
		 5\begin{minipage}{0.1\textwidth}
			\centering
			\resizebox{.7\textwidth}{!}{%
			\begin{forest}
				[, treeNodeRoot
					[, treeNode]
					[, treeNodeInner
						[, treeNode]
						[, treeNodeInner
							[, treeNode]
							[, treeNode]
						]
					]
				]
			\end{forest}
			}
		\end{minipage}
		\hspace*{-.3cm}
		+ \ 6 \begin{minipage}{0.12\textwidth}
			\centering
			\resizebox{.8\textwidth}{!}{%
			\begin{forest}
				[, treeNodeRoot
					[, treeNodeInner
						[, treeNode]
						[, treeNodeInner
							[, treeNode]
							[, treeNode]
						]	
					]
					[, treeNodeInner
						[,treeNode]
						[, treeNodeInner
							[, treeNode]
							[, treeNode]
						]
					]
				]
			\end{forest}
			}
		\end{minipage}
		\leq 5.
	\end{equation}	
\end{center}

The point $(\frac{5}{8}, \frac{5}{16})$ maximizes~\eqref{eq:finalSOS} in \revisedelete{both} direction \revise{of both axes}. Thus, it lies on a corner of upper boundary of the tree-profile of $\cat{4}$ and $E_6$.
\revisedelete{The certificates for $\cat{5}$ and $\cat{6}$ are very similar, and are given as ancillary files.}

\paragraph{Point 3: Left-most corner of the upper boundary.}

The works of Dossou-Olory and Wagner provide us with the left-most corners of the upper boundaries: the points $(\frac{4}{7}, \frac{10}{31})$, $(\frac{4}{21}, \frac{10}{31})$, and $(\frac{8}{217}, \frac{10}{31})$, for $k=4$, $k=5$, and $k=6$, respectively.
Indeed, they showed that they correspond to the densities of $\cat{k}$ and $E_6$ in the sequence of even trees $E_{\infty}= (E_n)_{n \to \infty}$, who simulateously minimize the density of $\cat{k}$ and maximize the one of $E_6$ .

Dossou-Olory \revise{proved} in~\cite{DossouOlory2018b} that the minimum asymptotic density of caterpillar trees is attained for $E_{\infty}$, and gives an explicit expression of this density. In the case of a binary caterpillar of size $k$, the formula is the following \begin{align*}
	p(\cat{k}, E_{\infty}) = \frac{k!}{2}\cdot \prod_{j=1}^{k-1}(2^j - 1)^{-1}.
\end{align*}
Applying this for $k=4,5,6$, we directly obtain
\begin{align*}
	p(\cat{4}, E_{\infty}) & = \frac{4}{7}, \\
	p(\cat{5}, E_{\infty}) & = \frac{4}{21}
\end{align*} and \begin{align*} p(\cat{6}, E_{\infty}) & = \frac{8}{217}.
\end{align*}

\revise{As previously mentioned, } we know from~\cite{wagner2017} that for every $r \geq 1$, the inducibility of $E_r$ is attained in the sequence $E_{\infty}$, and that $i(E_r) = p(E_r, E_{\infty})= r!\cdot c_r$, where
\begin{align*}
	c_{2s}   & = \frac{c_s^2}{2^{2s}-2},       \\
	c_{2s+1} & = \frac{c_s c_{s+1}}{2^{2s}-1}.
\end{align*}
This then gives us, with $r=6$,
\begin{align*} \label{eq:indE6}
	p(E_6, E_{\infty}) = \frac{10}{31}.
\end{align*}

\paragraph{Proving nonconvexity of the profiles.}
In contrast to the (local) profiles of trees considered by Bubeck and Linial \cite{bubeck2016TreeProfiles}, Figures~\ref{fig:profilesCatE} and~\ref{fig:profileMarked} show clearly that the tree-profiles considered in this paper can be nonconvex. We computed a rounded (and as such rigorous) sum of squares certificate for the upper bound $\frac{3322279122457465127}{25200000000000000000}\approx 0.1318365 \approx \frac{135}{1024} = p(E_6, \dcat{1/4}{3/4})$ for the $E_6$ density when $\cat{4} = \frac{101}{128} = p(\cat{4}, \dcat{1/4}{3/4})$. The certificate is given \revise{as} ancillary file, as it is unfortunately too large to be detailed here.

\section{Concluding remarks and open problems}

Applying the flag algebra theory to rooted binary trees has proved extremely efficient to obtain bounds on the densities and inducibilities of these trees. These very promising results give us many directions in which to continue researching on this topic: we list here the main ones.

\paragraph{Proving open conjectures on the inducibilities of trees.}
There are several conjectures about the maximizers of the density of a graph over all graphs of a fixed size. For example, in~\cite{wagner2017}, Czabarka et al. \revise{made} the following conjecture: for every $n\geq k$, $E_n$ has the largest number of copies of $E_k$ among all binary trees with $n$ leaves. In a similar fashion to what has been done in~\cite{Lidicky2023}, it may be possible to prove these conjectures using flag algebras.
In~\cite{DossouOlory2018}, Dossou-Olory and Wagner make the hypothesis that some trees may have an irrational inducibility, following on a question stated in~\cite{wagner2017}. As we mentioned in Section~\ref{sec:resultsInd}, they consider the
\revise{tree \begin{minipage}{0.05\textwidth}
		\centering
		\resizebox{\textwidth}{!}{%
			\begin{forest}
				[, treeNodeRoot
						[, treeNode]
						[, treeNodeInner[, treeNodeInner[, treeNode][, treeNode]]
								[, treeNodeInner[, treeNode][, treeNode]]]]
			\end{forest}}\end{minipage}}
as a possible tree with irrational inducibility. If the inducibility of this tree is indeed irrational, proving it could be possible using our framework, and would answer this open question.

\paragraph{Further investigation on the tree-profiles.}
There is still a lot to be discovered about tree-profiles. Our next goal would be to compute the exact description of the tree-profiles in Figure~\ref{fig:profilesCatE}. This means, on the one hand proving Conjecture~\ref{conj:upperBoundary}, and on the other hand determining the lower boundary which seems, as shown before, much more challenging to approach. Studying other tree-profiles in detail would also be of great interest and could lead us to learn more about the behavior and characteristics and the densities of these trees. In particular, we notice in Figure~\ref{fig:treeProfile8} \revise{in Appendix~\ref{app:profiles}} a sharp angle present on the $x$-axis at every level of the hierarchy above $1$. This leads us to think that we could obtain an exact Tur\'an number of \begin{minipage}{.04\textwidth}
	\resizebox{\textwidth}{!}{\begin{forest}
			[, treeNodeRoot [, treeNode] [, treeNodeInner [, treeNodeInner [, treeNode] [, treeNode] ] [, treeNodeInner [, treeNode] [, treeNodeInner [, treeNode] [, treeNode] ] ] ] ]
		\end{forest}
	}
\end{minipage} excluding \begin{minipage}{.04\textwidth}\resizebox{\textwidth}{!}{\begin{forest}
			[, treeNodeRoot [, treeNodeInner [, treeNode] [, treeNodeInner [, treeNodeInner [, treeNode] [, treeNode] ] [, treeNodeInner [, treeNode] [, treeNode] ] ] ] [, treeNode] ]
		\end{forest}
	}\end{minipage}.

\paragraph{Adapting to other types of trees.}
The settings to which this can be extended are numerous: $d$-ary and strictly $d$-ary trees, unrooted trees, colored trees, tree-pairs of phylogenetic trees as in~\cite{alon2016PhyloTrees}...

\paragraph{Acknowledgements}
	We are very grateful to Stephan Wagner for introducing us to this topic, and for his very valuable help throughout the project. We thank as well Luis Felipe Vargas for his useful comments. \revise{We also thank the reviewers for their insightful remarks, which have significantly improved the paper.}

\subsection*{Statements and Declarations}
\paragraph{Availability of data and materials.}
The code to recover the results obtained in this article will be provided as ArXiv ancillary files at \href{www.arxiv.org/abs/2404.12838}{arXiv:2404.12838}. No datasets are required.
\paragraph{Competing interests.} The authors have no relevant financial or non-financial interests to disclose.


\bibliographystyle{plain}
\bibliography{inducibilityTrees}

\newpage
\appendix
\section{Table of averaged products of trees}
\label{app:products}
We give here all the averaged products of trees-flags with up to $11$ leaves, and the equations obtained from quotienting out $\mathcal{K}^\varnothing$, as explained in Section~\ref{ssec:densitiesFlags}. In the interest of space, the trees are written in a compact way: unlabeled leaves are written as $\bullet$ and labeled leaves simply as their label (\circled{1} is written as $1$). The structure of the tree is given by the brackets: a tree with two branches $T_1$ and $T_2$ joining at the root node is written $(T_1T_2)$, and we proceed recursively to write the whole tree. For example, $(\bullet(\bullet\bullet)) = $ \begin{minipage}{0.04\textwidth}
	\centering
	\resizebox{\textwidth}{!}{%
		\begin{forest}
			[, treeNodeRoot
					[, treeNode]
					[, treeNodeInner[, treeNode][, treeNode]]]
		\end{forest}}\end{minipage}
and $((12)(3(4\bullet)))= $\begin{minipage}{0.1\textwidth}
	\centering
	\resizebox{\textwidth}{!}{%
		\begin{forest}
			[, treeNodeRoot
					[, treeNodeInner[1, treeNodeLabeled][2, treeNodeLabeled]]
					[, treeNodeInner[3, treeNodeLabeled][, treeNodeInner[4, treeNodeLabeled][, treeNode]]]]
		\end{forest}}\end{minipage}.
\begin{multicols}{2}
	{\allowdisplaybreaks\tiny
Type $\varnothing$
\begin{align*}
\llbracket \varnothing \cdot \varnothing\rrbracket &= \varnothing\\
\llbracket \varnothing \cdot \bullet\rrbracket &= \bullet\\
\llbracket \varnothing \cdot (\bullet\bullet)\rrbracket &= (\bullet\bullet)\\
\llbracket \varnothing \cdot (\bullet(\bullet\bullet))\rrbracket &= (\bullet(\bullet\bullet))\\
\llbracket \bullet \cdot \bullet\rrbracket &= (\bullet\bullet)\\
\llbracket \bullet \cdot (\bullet\bullet)\rrbracket &= (\bullet(\bullet\bullet))\\
\llbracket \bullet \cdot (\bullet(\bullet\bullet))\rrbracket &= (\bullet(\bullet(\bullet\bullet)))+ ((\bullet\bullet)(\bullet\bullet))\\
\llbracket (\bullet\bullet) \cdot (\bullet\bullet)\rrbracket &= (\bullet(\bullet(\bullet\bullet)))+ ((\bullet\bullet)(\bullet\bullet))\\
\llbracket (\bullet\bullet) \cdot (\bullet(\bullet\bullet))\rrbracket &= (\bullet(\bullet(\bullet(\bullet\bullet))))+ (\bullet((\bullet\bullet)(\bullet\bullet)))\\&\qquad + ((\bullet\bullet)(\bullet(\bullet\bullet)))\\
\llbracket (\bullet(\bullet\bullet)) \cdot (\bullet(\bullet\bullet))\rrbracket &= ((\bullet(\bullet\bullet))(\bullet(\bullet\bullet)))+ ((\bullet\bullet)((\bullet\bullet)(\bullet\bullet)))\\&\qquad + ((\bullet((\bullet\bullet)(\bullet\bullet)))\bullet)+ ((\bullet\bullet)(\bullet(\bullet(\bullet\bullet))))\\&\qquad + (\bullet(\bullet(\bullet(\bullet(\bullet\bullet)))))+ (\bullet((\bullet\bullet)(\bullet(\bullet\bullet))))\\
\end{align*}
Type $\bullet$
\begin{align*}
\llbracket 1 \cdot 1\rrbracket &= \bullet\\
\llbracket 1 \cdot (1\bullet)\rrbracket &= (\bullet\bullet)\\
\llbracket 1 \cdot (1(\bullet\bullet))\rrbracket &= \textstyle\tfrac{1}{3}(\bullet(\bullet\bullet))\\
\llbracket 1 \cdot (\bullet(1\bullet))\rrbracket &= \textstyle\tfrac{2}{3}(\bullet(\bullet\bullet))\\
\llbracket (1\bullet) \cdot (1\bullet)\rrbracket &= (\bullet(\bullet\bullet))\\
\llbracket (1\bullet) \cdot (1(\bullet\bullet))\rrbracket &= \textstyle\tfrac{1}{3}((\bullet\bullet)(\bullet\bullet))+ \textstyle\tfrac{1}{3}(\bullet(\bullet(\bullet\bullet)))\\
\llbracket (1\bullet) \cdot (\bullet(1\bullet))\rrbracket &= \textstyle\tfrac{2}{3}((\bullet\bullet)(\bullet\bullet))+ \textstyle\tfrac{2}{3}(\bullet(\bullet(\bullet\bullet)))\\
\llbracket (1(\bullet\bullet)) \cdot (1(\bullet\bullet))\rrbracket &= \textstyle\tfrac{1}{5}(\bullet(\bullet(\bullet(\bullet\bullet))))+ \textstyle\tfrac{1}{5}(\bullet((\bullet\bullet)(\bullet\bullet)))\\&\qquad + \textstyle\tfrac{1}{15}((\bullet\bullet)(\bullet(\bullet\bullet)))\\
\llbracket (1(\bullet\bullet)) \cdot (\bullet(1\bullet))\rrbracket &= \textstyle\tfrac{2}{15}(\bullet(\bullet(\bullet(\bullet\bullet))))+ \textstyle\tfrac{2}{15}(\bullet((\bullet\bullet)(\bullet\bullet)))\\&\qquad + \textstyle\tfrac{4}{15}((\bullet\bullet)(\bullet(\bullet\bullet)))\\
\llbracket (\bullet(1\bullet)) \cdot (\bullet(1\bullet))\rrbracket &= \textstyle\tfrac{8}{15}(\bullet(\bullet(\bullet(\bullet\bullet))))+ \textstyle\tfrac{8}{15}(\bullet((\bullet\bullet)(\bullet\bullet)))\\&\qquad + \textstyle\tfrac{2}{5}((\bullet\bullet)(\bullet(\bullet\bullet)))\\
\end{align*}
Type \resizebox{.06\textwidth}{!}{\begin{forest}
[, treeNodeRoot [, treeNode] [, treeNode] ]
\end{forest}
}
\begin{align*}
\llbracket (12) \cdot (12)\rrbracket &= (\bullet\bullet)\\
\llbracket (12) \cdot (1(2\bullet))\rrbracket &= \textstyle\tfrac{1}{3}(\bullet(\bullet\bullet))\\
\llbracket (12) \cdot (\bullet(12))\rrbracket &= \textstyle\tfrac{1}{3}(\bullet(\bullet\bullet))\\
\llbracket (12) \cdot ((12)(\bullet\bullet))\rrbracket &= \textstyle\tfrac{1}{3}((\bullet\bullet)(\bullet\bullet))\\
\llbracket (12) \cdot ((1\bullet)(2\bullet))\rrbracket &= \textstyle\tfrac{2}{3}((\bullet\bullet)(\bullet\bullet))\\
\llbracket (12) \cdot (1(2(\bullet\bullet)))\rrbracket &= \textstyle\tfrac{1}{12}(\bullet(\bullet(\bullet\bullet)))\\
\llbracket (12) \cdot (1(\bullet(2\bullet)))\rrbracket &= \textstyle\tfrac{1}{6}(\bullet(\bullet(\bullet\bullet)))\\
\llbracket (12) \cdot (\bullet(1(2\bullet)))\rrbracket &= \textstyle\tfrac{1}{6}(\bullet(\bullet(\bullet\bullet)))\\
\llbracket (12) \cdot (\bullet(\bullet(12)))\rrbracket &= \textstyle\tfrac{1}{6}(\bullet(\bullet(\bullet\bullet)))\\
\llbracket (1(2\bullet)) \cdot (1(2\bullet))\rrbracket &= \textstyle\tfrac{1}{4}(\bullet(\bullet(\bullet\bullet)))\\
\llbracket (1(2\bullet)) \cdot (2(1\bullet))\rrbracket &= \textstyle\tfrac{1}{3}((\bullet\bullet)(\bullet\bullet))\\
\llbracket (1(2\bullet)) \cdot (\bullet(12))\rrbracket &= \textstyle\tfrac{1}{12}(\bullet(\bullet(\bullet\bullet)))\\
\llbracket (1(2\bullet)) \cdot ((12)(\bullet\bullet))\rrbracket &= \textstyle\tfrac{1}{30}((\bullet\bullet)(\bullet(\bullet\bullet)))\\
\llbracket (1(2\bullet)) \cdot ((1\bullet)(2\bullet))\rrbracket &= \textstyle\tfrac{1}{5}((\bullet\bullet)(\bullet(\bullet\bullet)))\\
\llbracket (1(2\bullet)) \cdot (1(2(\bullet\bullet)))\rrbracket &= \textstyle\tfrac{1}{15}(\bullet(\bullet(\bullet(\bullet\bullet))))+ \textstyle\tfrac{1}{15}(\bullet((\bullet\bullet)(\bullet\bullet)))\\
\llbracket (1(2\bullet)) \cdot (1(\bullet(2\bullet)))\rrbracket &= \textstyle\tfrac{2}{15}(\bullet(\bullet(\bullet(\bullet\bullet))))+ \textstyle\tfrac{2}{15}(\bullet((\bullet\bullet)(\bullet\bullet)))\\
\llbracket (1(2\bullet)) \cdot (2(1(\bullet\bullet)))\rrbracket &= \textstyle\tfrac{1}{30}((\bullet\bullet)(\bullet(\bullet\bullet)))\\
\llbracket (1(2\bullet)) \cdot (2(\bullet(1\bullet)))\rrbracket &= \textstyle\tfrac{1}{15}((\bullet\bullet)(\bullet(\bullet\bullet)))\\
\llbracket (1(2\bullet)) \cdot (\bullet(1(2\bullet)))\rrbracket &= \textstyle\tfrac{1}{10}(\bullet(\bullet(\bullet(\bullet\bullet))))\\
\llbracket (1(2\bullet)) \cdot (\bullet(2(1\bullet)))\rrbracket &= \textstyle\tfrac{2}{15}(\bullet((\bullet\bullet)(\bullet\bullet)))\\
\llbracket (1(2\bullet)) \cdot (\bullet(\bullet(12)))\rrbracket &= \textstyle\tfrac{1}{30}(\bullet(\bullet(\bullet(\bullet\bullet))))\\
\llbracket (\bullet(12)) \cdot (\bullet(12))\rrbracket &= \textstyle\tfrac{1}{6}(\bullet(\bullet(\bullet\bullet)))+ \textstyle\tfrac{1}{3}((\bullet\bullet)(\bullet\bullet))\\
\llbracket (\bullet(12)) \cdot ((12)(\bullet\bullet))\rrbracket &= \textstyle\tfrac{1}{15}(\bullet((\bullet\bullet)(\bullet\bullet)))+ \textstyle\tfrac{2}{15}((\bullet\bullet)(\bullet(\bullet\bullet)))\\
\llbracket (\bullet(12)) \cdot ((1\bullet)(2\bullet))\rrbracket &= \textstyle\tfrac{2}{15}(\bullet((\bullet\bullet)(\bullet\bullet)))\\
\llbracket (\bullet(12)) \cdot (1(2(\bullet\bullet)))\rrbracket &= \textstyle\tfrac{1}{60}(\bullet(\bullet(\bullet(\bullet\bullet))))\\
\llbracket (\bullet(12)) \cdot (1(\bullet(2\bullet)))\rrbracket &= \textstyle\tfrac{1}{30}(\bullet(\bullet(\bullet(\bullet\bullet))))\\
\llbracket (\bullet(12)) \cdot (\bullet(1(2\bullet)))\rrbracket &= \textstyle\tfrac{1}{15}(\bullet(\bullet(\bullet(\bullet\bullet))))+ \textstyle\tfrac{1}{15}((\bullet\bullet)(\bullet(\bullet\bullet)))\\
\llbracket (\bullet(12)) \cdot (\bullet(\bullet(12)))\rrbracket &= \textstyle\tfrac{1}{10}(\bullet(\bullet(\bullet(\bullet\bullet))))+ \textstyle\tfrac{2}{15}(\bullet((\bullet\bullet)(\bullet\bullet)))\\&\qquad + \textstyle\tfrac{1}{15}((\bullet\bullet)(\bullet(\bullet\bullet)))\\
\llbracket ((12)(\bullet\bullet)) \cdot ((12)(\bullet\bullet))\rrbracket &= \textstyle\tfrac{1}{9}((\bullet\bullet)((\bullet\bullet)(\bullet\bullet)))+ \textstyle\tfrac{1}{15}((\bullet\bullet)(\bullet(\bullet(\bullet\bullet))))\\
\llbracket ((12)(\bullet\bullet)) \cdot ((1\bullet)(2\bullet))\rrbracket &= \textstyle\tfrac{2}{45}((\bullet\bullet)((\bullet\bullet)(\bullet\bullet)))\\
\llbracket ((12)(\bullet\bullet)) \cdot (1(2(\bullet\bullet)))\rrbracket &= \textstyle\tfrac{1}{180}((\bullet\bullet)(\bullet(\bullet(\bullet\bullet))))\\
\llbracket ((12)(\bullet\bullet)) \cdot (1(\bullet(2\bullet)))\rrbracket &= \textstyle\tfrac{1}{90}((\bullet\bullet)(\bullet(\bullet(\bullet\bullet))))\\
\llbracket ((12)(\bullet\bullet)) \cdot (\bullet(1(2\bullet)))\rrbracket &= \textstyle\tfrac{1}{15}((\bullet(\bullet\bullet))(\bullet(\bullet\bullet)))+ \textstyle\tfrac{1}{90}((\bullet\bullet)(\bullet(\bullet(\bullet\bullet))))\\&\qquad + \textstyle\tfrac{1}{90}(\bullet((\bullet\bullet)(\bullet(\bullet\bullet))))\\
\llbracket ((12)(\bullet\bullet)) \cdot (\bullet(\bullet(12)))\rrbracket &= \textstyle\tfrac{1}{15}((\bullet(\bullet\bullet))(\bullet(\bullet\bullet)))+ \textstyle\tfrac{1}{45}((\bullet((\bullet\bullet)(\bullet\bullet)))\bullet)\\&\qquad + \textstyle\tfrac{1}{90}((\bullet\bullet)(\bullet(\bullet(\bullet\bullet))))+ \textstyle\tfrac{2}{45}(\bullet((\bullet\bullet)(\bullet(\bullet\bullet))))\\
\llbracket ((1\bullet)(2\bullet)) \cdot ((1\bullet)(2\bullet))\rrbracket &= \textstyle\tfrac{2}{5}((\bullet(\bullet\bullet))(\bullet(\bullet\bullet)))\\
\llbracket ((1\bullet)(2\bullet)) \cdot (1(2(\bullet\bullet)))\rrbracket &= \textstyle\tfrac{2}{45}((\bullet\bullet)((\bullet\bullet)(\bullet\bullet)))+ \textstyle\tfrac{2}{45}((\bullet\bullet)(\bullet(\bullet(\bullet\bullet))))\\
\llbracket ((1\bullet)(2\bullet)) \cdot (1(\bullet(2\bullet)))\rrbracket &= \textstyle\tfrac{4}{45}((\bullet\bullet)((\bullet\bullet)(\bullet\bullet)))+ \textstyle\tfrac{4}{45}((\bullet\bullet)(\bullet(\bullet(\bullet\bullet))))\\
\llbracket ((1\bullet)(2\bullet)) \cdot (\bullet(1(2\bullet)))\rrbracket &= \textstyle\tfrac{1}{15}(\bullet((\bullet\bullet)(\bullet(\bullet\bullet))))\\
\llbracket ((1\bullet)(2\bullet)) \cdot (\bullet(\bullet(12)))\rrbracket &= \textstyle\tfrac{2}{45}((\bullet((\bullet\bullet)(\bullet\bullet)))\bullet)\\
\llbracket (1(2(\bullet\bullet))) \cdot (1(2(\bullet\bullet)))\rrbracket &= \textstyle\tfrac{1}{30}((\bullet((\bullet\bullet)(\bullet\bullet)))\bullet)+ \textstyle\tfrac{1}{30}(\bullet(\bullet(\bullet(\bullet(\bullet\bullet)))))\\&\qquad + \textstyle\tfrac{1}{90}(\bullet((\bullet\bullet)(\bullet(\bullet\bullet))))\\
\llbracket (1(2(\bullet\bullet))) \cdot (1(\bullet(2\bullet)))\rrbracket &= \textstyle\tfrac{1}{45}((\bullet((\bullet\bullet)(\bullet\bullet)))\bullet)+ \textstyle\tfrac{1}{45}(\bullet(\bullet(\bullet(\bullet(\bullet\bullet)))))\\&\qquad + \textstyle\tfrac{2}{45}(\bullet((\bullet\bullet)(\bullet(\bullet\bullet))))\\
\llbracket (1(2(\bullet\bullet))) \cdot (2(1(\bullet\bullet)))\rrbracket &= \textstyle\tfrac{1}{90}((\bullet(\bullet\bullet))(\bullet(\bullet\bullet)))\\
\llbracket (1(2(\bullet\bullet))) \cdot (2(\bullet(1\bullet)))\rrbracket &= \textstyle\tfrac{1}{45}((\bullet(\bullet\bullet))(\bullet(\bullet\bullet)))\\
\llbracket (1(2(\bullet\bullet))) \cdot (\bullet(1(2\bullet)))\rrbracket &= \textstyle\tfrac{1}{45}((\bullet((\bullet\bullet)(\bullet\bullet)))\bullet)+ \textstyle\tfrac{1}{45}(\bullet(\bullet(\bullet(\bullet(\bullet\bullet)))))\\
\llbracket (1(2(\bullet\bullet))) \cdot (\bullet(2(1\bullet)))\rrbracket &= \textstyle\tfrac{1}{90}(\bullet((\bullet\bullet)(\bullet(\bullet\bullet))))\\
\llbracket (1(2(\bullet\bullet))) \cdot (\bullet(\bullet(12)))\rrbracket &= \textstyle\tfrac{1}{180}(\bullet(\bullet(\bullet(\bullet(\bullet\bullet)))))\\
\llbracket (1(\bullet(2\bullet))) \cdot (1(\bullet(2\bullet)))\rrbracket &= \textstyle\tfrac{4}{45}((\bullet((\bullet\bullet)(\bullet\bullet)))\bullet)+ \textstyle\tfrac{4}{45}(\bullet(\bullet(\bullet(\bullet(\bullet\bullet)))))\\&\qquad + \textstyle\tfrac{1}{15}(\bullet((\bullet\bullet)(\bullet(\bullet\bullet))))\\
\llbracket (1(\bullet(2\bullet))) \cdot (2(\bullet(1\bullet)))\rrbracket &= \textstyle\tfrac{2}{45}((\bullet(\bullet\bullet))(\bullet(\bullet\bullet)))\\
\llbracket (1(\bullet(2\bullet))) \cdot (\bullet(1(2\bullet)))\rrbracket &= \textstyle\tfrac{2}{45}((\bullet((\bullet\bullet)(\bullet\bullet)))\bullet)+ \textstyle\tfrac{2}{45}(\bullet(\bullet(\bullet(\bullet(\bullet\bullet)))))\\
\llbracket (1(\bullet(2\bullet))) \cdot (\bullet(2(1\bullet)))\rrbracket &= \textstyle\tfrac{1}{45}(\bullet((\bullet\bullet)(\bullet(\bullet\bullet))))\\
\llbracket (1(\bullet(2\bullet))) \cdot (\bullet(\bullet(12)))\rrbracket &= \textstyle\tfrac{1}{90}(\bullet(\bullet(\bullet(\bullet(\bullet\bullet)))))\\
\llbracket (\bullet(1(2\bullet))) \cdot (\bullet(1(2\bullet)))\rrbracket &= \textstyle\tfrac{1}{15}((\bullet\bullet)(\bullet(\bullet(\bullet\bullet))))+ \textstyle\tfrac{1}{15}(\bullet(\bullet(\bullet(\bullet(\bullet\bullet)))))\\
\llbracket (\bullet(1(2\bullet))) \cdot (\bullet(2(1\bullet)))\rrbracket &= \textstyle\tfrac{4}{45}((\bullet\bullet)((\bullet\bullet)(\bullet\bullet)))+ \textstyle\tfrac{4}{45}((\bullet((\bullet\bullet)(\bullet\bullet)))\bullet)\\
\llbracket (\bullet(1(2\bullet))) \cdot (\bullet(\bullet(12)))\rrbracket &= \textstyle\tfrac{1}{45}((\bullet\bullet)(\bullet(\bullet(\bullet\bullet))))+ \textstyle\tfrac{1}{30}(\bullet(\bullet(\bullet(\bullet(\bullet\bullet)))))\\&\qquad + \textstyle\tfrac{1}{45}(\bullet((\bullet\bullet)(\bullet(\bullet\bullet))))\\
\llbracket (\bullet(\bullet(12))) \cdot (\bullet(\bullet(12)))\rrbracket &= \textstyle\tfrac{4}{45}((\bullet\bullet)((\bullet\bullet)(\bullet\bullet)))+ \textstyle\tfrac{4}{45}((\bullet((\bullet\bullet)(\bullet\bullet)))\bullet)\\&\qquad + \textstyle\tfrac{2}{45}((\bullet\bullet)(\bullet(\bullet(\bullet\bullet))))+ \textstyle\tfrac{1}{15}(\bullet(\bullet(\bullet(\bullet(\bullet\bullet)))))\\&\qquad + \textstyle\tfrac{2}{45}(\bullet((\bullet\bullet)(\bullet(\bullet\bullet))))\\
\end{align*}
Type \resizebox{.06\textwidth}{!}{\begin{forest}
[, treeNodeRoot [, treeNode] [, treeNodeInner [, treeNode] [, treeNode] ] ]
\end{forest}
}
\begin{align*}
\llbracket (1(23)) \cdot (1(23))\rrbracket &= \textstyle\tfrac{1}{3}(\bullet(\bullet\bullet))\\
\llbracket (1(23)) \cdot ((1\bullet)(23))\rrbracket &= \textstyle\tfrac{1}{3}((\bullet\bullet)(\bullet\bullet))\\
\llbracket (1(23)) \cdot (1(2(3\bullet)))\rrbracket &= \textstyle\tfrac{1}{12}(\bullet(\bullet(\bullet\bullet)))\\
\llbracket (1(23)) \cdot (1(\bullet(23)))\rrbracket &= \textstyle\tfrac{1}{12}(\bullet(\bullet(\bullet\bullet)))\\
\llbracket (1(23)) \cdot (\bullet(1(23)))\rrbracket &= \textstyle\tfrac{1}{12}(\bullet(\bullet(\bullet\bullet)))\\
\llbracket ((1\bullet)(23)) \cdot ((1\bullet)(23))\rrbracket &= \textstyle\tfrac{1}{10}((\bullet\bullet)(\bullet(\bullet\bullet)))\\
\llbracket ((1\bullet)(23)) \cdot (1(2(3\bullet)))\rrbracket &= \textstyle\tfrac{1}{30}((\bullet\bullet)(\bullet(\bullet\bullet)))\\
\llbracket ((1\bullet)(23)) \cdot (1(\bullet(23)))\rrbracket &= \textstyle\tfrac{1}{30}((\bullet\bullet)(\bullet(\bullet\bullet)))\\
\llbracket ((1\bullet)(23)) \cdot (\bullet(1(23)))\rrbracket &= \textstyle\tfrac{1}{15}(\bullet((\bullet\bullet)(\bullet\bullet)))\\
\llbracket (1(2(3\bullet))) \cdot (1(2(3\bullet)))\rrbracket &= \textstyle\tfrac{1}{20}(\bullet(\bullet(\bullet(\bullet\bullet))))\\
\llbracket (1(2(3\bullet))) \cdot (1(3(2\bullet)))\rrbracket &= \textstyle\tfrac{1}{15}(\bullet((\bullet\bullet)(\bullet\bullet)))\\
\llbracket (1(2(3\bullet))) \cdot (1(\bullet(23)))\rrbracket &= \textstyle\tfrac{1}{60}(\bullet(\bullet(\bullet(\bullet\bullet))))\\
\llbracket (1(2(3\bullet))) \cdot (\bullet(1(23)))\rrbracket &= \textstyle\tfrac{1}{60}(\bullet(\bullet(\bullet(\bullet\bullet))))\\
\llbracket (1(\bullet(23))) \cdot (1(\bullet(23)))\rrbracket &= \textstyle\tfrac{1}{30}(\bullet(\bullet(\bullet(\bullet\bullet))))+ \textstyle\tfrac{1}{15}(\bullet((\bullet\bullet)(\bullet\bullet)))\\
\llbracket (1(\bullet(23))) \cdot (\bullet(1(23)))\rrbracket &= \textstyle\tfrac{1}{60}(\bullet(\bullet(\bullet(\bullet\bullet))))\\
\llbracket (\bullet(1(23))) \cdot (\bullet(1(23)))\rrbracket &= \textstyle\tfrac{1}{30}(\bullet(\bullet(\bullet(\bullet\bullet))))+ \textstyle\tfrac{1}{30}((\bullet\bullet)(\bullet(\bullet\bullet)))\\
\end{align*}
Type \resizebox{.06\textwidth}{!}{\begin{forest}
[, treeNodeRoot [, treeNodeInner [, treeNode] [, treeNode] ] [, treeNodeInner [, treeNode] [, treeNode] ] ]
\end{forest}
}
\begin{align*}
\llbracket ((12)(34)) \cdot ((12)(34))\rrbracket &= \textstyle\tfrac{1}{3}((\bullet\bullet)(\bullet\bullet))\\
\llbracket ((12)(34)) \cdot ((12)(3(4\bullet)))\rrbracket &= \textstyle\tfrac{1}{30}((\bullet\bullet)(\bullet(\bullet\bullet)))\\
\llbracket ((12)(34)) \cdot ((12)(\bullet(34)))\rrbracket &= \textstyle\tfrac{1}{30}((\bullet\bullet)(\bullet(\bullet\bullet)))\\
\llbracket ((12)(34)) \cdot (\bullet((12)(34)))\rrbracket &= \textstyle\tfrac{1}{15}(\bullet((\bullet\bullet)(\bullet\bullet)))\\
\llbracket ((12)(3(4\bullet))) \cdot ((12)(3(4\bullet)))\rrbracket &= \textstyle\tfrac{1}{60}((\bullet\bullet)(\bullet(\bullet(\bullet\bullet))))\\
\llbracket ((12)(3(4\bullet))) \cdot ((12)(4(3\bullet)))\rrbracket &= \textstyle\tfrac{1}{45}((\bullet\bullet)((\bullet\bullet)(\bullet\bullet)))\\
\llbracket ((12)(3(4\bullet))) \cdot ((12)(\bullet(34)))\rrbracket &= \textstyle\tfrac{1}{180}((\bullet\bullet)(\bullet(\bullet(\bullet\bullet))))\\
\llbracket ((12)(3(4\bullet))) \cdot ((34)(1(2\bullet)))\rrbracket &= \textstyle\tfrac{1}{90}((\bullet(\bullet\bullet))(\bullet(\bullet\bullet)))\\
\llbracket ((12)(3(4\bullet))) \cdot ((34)(\bullet(12)))\rrbracket &= \textstyle\tfrac{1}{90}((\bullet(\bullet\bullet))(\bullet(\bullet\bullet)))\\
\llbracket ((12)(3(4\bullet))) \cdot (\bullet((12)(34)))\rrbracket &= \textstyle\tfrac{1}{180}(\bullet((\bullet\bullet)(\bullet(\bullet\bullet))))\\
\llbracket ((12)(\bullet(34))) \cdot ((12)(\bullet(34)))\rrbracket &= \textstyle\tfrac{1}{45}((\bullet\bullet)((\bullet\bullet)(\bullet\bullet)))+ \textstyle\tfrac{1}{90}((\bullet\bullet)(\bullet(\bullet(\bullet\bullet))))\\
\llbracket ((12)(\bullet(34))) \cdot ((34)(\bullet(12)))\rrbracket &= \textstyle\tfrac{1}{90}((\bullet(\bullet\bullet))(\bullet(\bullet\bullet)))\\
\llbracket ((12)(\bullet(34))) \cdot (\bullet((12)(34)))\rrbracket &= \textstyle\tfrac{1}{180}(\bullet((\bullet\bullet)(\bullet(\bullet\bullet))))\\
\llbracket (\bullet((12)(34))) \cdot (\bullet((12)(34)))\rrbracket &= \textstyle\tfrac{1}{45}((\bullet\bullet)((\bullet\bullet)(\bullet\bullet)))+ \textstyle\tfrac{1}{45}((\bullet((\bullet\bullet)(\bullet\bullet)))\bullet)\\
\end{align*}
Type \resizebox{.06\textwidth}{!}{\begin{forest}
[, treeNodeRoot [, treeNode] [, treeNodeInner [, treeNode] [, treeNodeInner [, treeNode] [, treeNode] ] ] ]
\end{forest}
}
\begin{align*}
\llbracket (1(2(34))) \cdot (1(2(34)))\rrbracket &= \textstyle\tfrac{1}{12}(\bullet(\bullet(\bullet\bullet)))\\
\llbracket (1(2(34))) \cdot ((1\bullet)(2(34)))\rrbracket &= \textstyle\tfrac{1}{30}((\bullet\bullet)(\bullet(\bullet\bullet)))\\
\llbracket (1(2(34))) \cdot (1((2\bullet)(34)))\rrbracket &= \textstyle\tfrac{1}{15}(\bullet((\bullet\bullet)(\bullet\bullet)))\\
\llbracket (1(2(34))) \cdot (1(2(3(4\bullet))))\rrbracket &= \textstyle\tfrac{1}{60}(\bullet(\bullet(\bullet(\bullet\bullet))))\\
\llbracket (1(2(34))) \cdot (1(2(\bullet(34))))\rrbracket &= \textstyle\tfrac{1}{60}(\bullet(\bullet(\bullet(\bullet\bullet))))\\
\llbracket (1(2(34))) \cdot (1(\bullet(2(34))))\rrbracket &= \textstyle\tfrac{1}{60}(\bullet(\bullet(\bullet(\bullet\bullet))))\\
\llbracket (1(2(34))) \cdot (\bullet(1(2(34))))\rrbracket &= \textstyle\tfrac{1}{60}(\bullet(\bullet(\bullet(\bullet\bullet))))\\
\llbracket ((1\bullet)(2(34))) \cdot ((1\bullet)(2(34)))\rrbracket &= \textstyle\tfrac{1}{30}((\bullet(\bullet\bullet))(\bullet(\bullet\bullet)))\\
\llbracket ((1\bullet)(2(34))) \cdot (1((2\bullet)(34)))\rrbracket &= \textstyle\tfrac{1}{45}((\bullet\bullet)((\bullet\bullet)(\bullet\bullet)))\\
\llbracket ((1\bullet)(2(34))) \cdot (1(2(3(4\bullet))))\rrbracket &= \textstyle\tfrac{1}{180}((\bullet\bullet)(\bullet(\bullet(\bullet\bullet))))\\
\llbracket ((1\bullet)(2(34))) \cdot (1(2(\bullet(34))))\rrbracket &= \textstyle\tfrac{1}{180}((\bullet\bullet)(\bullet(\bullet(\bullet\bullet))))\\
\llbracket ((1\bullet)(2(34))) \cdot (1(\bullet(2(34))))\rrbracket &= \textstyle\tfrac{1}{180}((\bullet\bullet)(\bullet(\bullet(\bullet\bullet))))\\
\llbracket ((1\bullet)(2(34))) \cdot (\bullet(1(2(34))))\rrbracket &= \textstyle\tfrac{1}{180}(\bullet((\bullet\bullet)(\bullet(\bullet\bullet))))\\
\llbracket (1((2\bullet)(34))) \cdot (1((2\bullet)(34)))\rrbracket &= \textstyle\tfrac{1}{60}(\bullet((\bullet\bullet)(\bullet(\bullet\bullet))))\\
\llbracket (1((2\bullet)(34))) \cdot (1(2(3(4\bullet))))\rrbracket &= \textstyle\tfrac{1}{180}(\bullet((\bullet\bullet)(\bullet(\bullet\bullet))))\\
\llbracket (1((2\bullet)(34))) \cdot (1(2(\bullet(34))))\rrbracket &= \textstyle\tfrac{1}{180}(\bullet((\bullet\bullet)(\bullet(\bullet\bullet))))\\
\llbracket (1((2\bullet)(34))) \cdot (1(\bullet(2(34))))\rrbracket &= \textstyle\tfrac{1}{90}((\bullet((\bullet\bullet)(\bullet\bullet)))\bullet)\\
\llbracket (1((2\bullet)(34))) \cdot (\bullet(1(2(34))))\rrbracket &= \textstyle\tfrac{1}{90}((\bullet((\bullet\bullet)(\bullet\bullet)))\bullet)\\
\llbracket (1(2(3(4\bullet)))) \cdot (1(2(3(4\bullet))))\rrbracket &= \textstyle\tfrac{1}{120}(\bullet(\bullet(\bullet(\bullet(\bullet\bullet)))))\\
\llbracket (1(2(3(4\bullet)))) \cdot (1(2(4(3\bullet))))\rrbracket &= \textstyle\tfrac{1}{90}((\bullet((\bullet\bullet)(\bullet\bullet)))\bullet)\\
\llbracket (1(2(3(4\bullet)))) \cdot (1(2(\bullet(34))))\rrbracket &= \textstyle\tfrac{1}{360}(\bullet(\bullet(\bullet(\bullet(\bullet\bullet)))))\\
\llbracket (1(2(3(4\bullet)))) \cdot (1(\bullet(2(34))))\rrbracket &= \textstyle\tfrac{1}{360}(\bullet(\bullet(\bullet(\bullet(\bullet\bullet)))))\\
\llbracket (1(2(3(4\bullet)))) \cdot (\bullet(1(2(34))))\rrbracket &= \textstyle\tfrac{1}{360}(\bullet(\bullet(\bullet(\bullet(\bullet\bullet)))))\\
\llbracket (1(2(\bullet(34)))) \cdot (1(2(\bullet(34))))\rrbracket &= \textstyle\tfrac{1}{90}((\bullet((\bullet\bullet)(\bullet\bullet)))\bullet)+ \textstyle\tfrac{1}{180}(\bullet(\bullet(\bullet(\bullet(\bullet\bullet)))))\\
\llbracket (1(2(\bullet(34)))) \cdot (1(\bullet(2(34))))\rrbracket &= \textstyle\tfrac{1}{360}(\bullet(\bullet(\bullet(\bullet(\bullet\bullet)))))\\
\llbracket (1(2(\bullet(34)))) \cdot (\bullet(1(2(34))))\rrbracket &= \textstyle\tfrac{1}{360}(\bullet(\bullet(\bullet(\bullet(\bullet\bullet)))))\\
\llbracket (1(\bullet(2(34)))) \cdot (1(\bullet(2(34))))\rrbracket &= \textstyle\tfrac{1}{180}(\bullet(\bullet(\bullet(\bullet(\bullet\bullet)))))+ \textstyle\tfrac{1}{180}(\bullet((\bullet\bullet)(\bullet(\bullet\bullet))))\\
\llbracket (1(\bullet(2(34)))) \cdot (\bullet(1(2(34))))\rrbracket &= \textstyle\tfrac{1}{360}(\bullet(\bullet(\bullet(\bullet(\bullet\bullet)))))\\
\llbracket (\bullet(1(2(34)))) \cdot (\bullet(1(2(34))))\rrbracket &= \textstyle\tfrac{1}{180}((\bullet\bullet)(\bullet(\bullet(\bullet\bullet))))+ \textstyle\tfrac{1}{180}(\bullet(\bullet(\bullet(\bullet(\bullet\bullet)))))\\
\end{align*}
Type \resizebox{.06\textwidth}{!}{\begin{forest}
[, treeNodeRoot [, treeNode] [, treeNodeInner [, treeNode] [, treeNodeInner [, treeNode] [, treeNodeInner [, treeNode] [, treeNode] ] ] ] ]
\end{forest}
}
\begin{align*}
\llbracket (1(2(3(45)))) \cdot (1(2(3(45))))\rrbracket &= \textstyle\tfrac{1}{60}(\bullet(\bullet(\bullet(\bullet\bullet))))\\
\end{align*}
Type \resizebox{.06\textwidth}{!}{\begin{forest}
[, treeNodeRoot [, treeNode] [, treeNodeInner [, treeNodeInner [, treeNode] [, treeNode] ] [, treeNodeInner [, treeNode] [, treeNode] ] ] ]
\end{forest}
}
\begin{align*}
\llbracket (1((23)(45))) \cdot (1((23)(45)))\rrbracket &= \textstyle\tfrac{1}{15}(\bullet((\bullet\bullet)(\bullet\bullet)))\\
\end{align*}
Type \resizebox{.06\textwidth}{!}{\begin{forest}
[, treeNodeRoot [, treeNodeInner [, treeNode] [, treeNode] ] [, treeNodeInner [, treeNode] [, treeNodeInner [, treeNode] [, treeNode] ] ] ]
\end{forest}
}
\begin{align*}
\llbracket ((12)(3(45))) \cdot ((12)(3(45)))\rrbracket &= \textstyle\tfrac{1}{30}((\bullet\bullet)(\bullet(\bullet\bullet)))\\
\end{align*}
Type \resizebox{.06\textwidth}{!}{\begin{forest}
[, treeNodeRoot [, treeNodeInner [, treeNode] [, treeNodeInner [, treeNodeInner [, treeNode] [, treeNode] ] [, treeNodeInner [, treeNode] [, treeNode] ] ] ] [, treeNode] ]
\end{forest}
}
\begin{align*}
\llbracket ((1((23)(45)))6) \cdot ((1((23)(45)))6)\rrbracket &= \textstyle\tfrac{1}{90}((\bullet((\bullet\bullet)(\bullet\bullet)))\bullet)\\
\end{align*}
Type \resizebox{.06\textwidth}{!}{\begin{forest}
[, treeNodeRoot [, treeNodeInner [, treeNode] [, treeNode] ] [, treeNodeInner [, treeNode] [, treeNodeInner [, treeNode] [, treeNodeInner [, treeNode] [, treeNode] ] ] ] ]
\end{forest}
}
\begin{align*}
\llbracket ((12)(3(4(56)))) \cdot ((12)(3(4(56))))\rrbracket &= \textstyle\tfrac{1}{180}((\bullet\bullet)(\bullet(\bullet(\bullet\bullet))))\\
\end{align*}
Type \resizebox{.06\textwidth}{!}{\begin{forest}
[, treeNodeRoot [, treeNode] [, treeNodeInner [, treeNodeInner [, treeNode] [, treeNode] ] [, treeNodeInner [, treeNode] [, treeNodeInner [, treeNode] [, treeNode] ] ] ] ]
\end{forest}
}
\begin{align*}
\llbracket (1((23)(4(56)))) \cdot (1((23)(4(56))))\rrbracket &= \textstyle\tfrac{1}{180}(\bullet((\bullet\bullet)(\bullet(\bullet\bullet))))\\
\end{align*}
Type \resizebox{.06\textwidth}{!}{\begin{forest}
[, treeNodeRoot [, treeNodeInner [, treeNode] [, treeNodeInner [, treeNode] [, treeNode] ] ] [, treeNodeInner [, treeNode] [, treeNodeInner [, treeNode] [, treeNode] ] ] ]
\end{forest}
}
\begin{align*}
\llbracket ((1(23))(4(56))) \cdot ((1(23))(4(56)))\rrbracket &= \textstyle\tfrac{1}{90}((\bullet(\bullet\bullet))(\bullet(\bullet\bullet)))\\
\end{align*}
Type \resizebox{.06\textwidth}{!}{\begin{forest}
[, treeNodeRoot [, treeNodeInner [, treeNode] [, treeNode] ] [, treeNodeInner [, treeNodeInner [, treeNode] [, treeNode] ] [, treeNodeInner [, treeNode] [, treeNode] ] ] ]
\end{forest}
}
\begin{align*}
\llbracket ((12)((34)(56))) \cdot ((12)((34)(56)))\rrbracket &= \textstyle\tfrac{1}{45}((\bullet\bullet)((\bullet\bullet)(\bullet\bullet)))\\
\end{align*}
Type \resizebox{.06\textwidth}{!}{\begin{forest}
[, treeNodeRoot [, treeNode] [, treeNodeInner [, treeNode] [, treeNodeInner [, treeNode] [, treeNodeInner [, treeNode] [, treeNodeInner [, treeNode] [, treeNode] ] ] ] ] ]
\end{forest}
}
\begin{align*}
\llbracket (1(2(3(4(56))))) \cdot (1(2(3(4(56)))))\rrbracket &= \textstyle\tfrac{1}{360}(\bullet(\bullet(\bullet(\bullet(\bullet\bullet)))))\\
\end{align*}
Quotient:
\begin{align*}
\varnothing &= \bullet \cdot \varnothing = \bullet\\
\bullet &= \bullet \cdot \bullet = (\bullet\bullet)\\
(\bullet\bullet) &= \bullet \cdot (\bullet\bullet) = (\bullet(\bullet\bullet))\\
(\bullet(\bullet\bullet)) &= \bullet \cdot (\bullet(\bullet\bullet)) = ((\bullet\bullet)(\bullet\bullet))+ (\bullet(\bullet(\bullet\bullet)))\\
(\bullet(\bullet(\bullet\bullet))) &= \bullet \cdot (\bullet(\bullet(\bullet\bullet))) = (\bullet(\bullet(\bullet(\bullet\bullet))))+ \tfrac{4}{5}(\bullet((\bullet\bullet)(\bullet\bullet)))+ \tfrac{2}{5}((\bullet\bullet)(\bullet(\bullet\bullet)))\\
((\bullet\bullet)(\bullet\bullet)) &= \bullet \cdot ((\bullet\bullet)(\bullet\bullet)) = \tfrac{1}{5}(\bullet((\bullet\bullet)(\bullet\bullet)))+ \tfrac{3}{5}((\bullet\bullet)(\bullet(\bullet\bullet)))\\
(\bullet(\bullet(\bullet(\bullet\bullet)))) &= \bullet \cdot (\bullet(\bullet(\bullet(\bullet\bullet)))) = \tfrac{2}{3}((\bullet((\bullet\bullet)(\bullet\bullet)))\bullet)+ \tfrac{1}{3}((\bullet\bullet)(\bullet(\bullet(\bullet\bullet))))\\&\qquad+ \tfrac{1}{3}(\bullet((\bullet\bullet)(\bullet(\bullet\bullet))))+ (\bullet(\bullet(\bullet(\bullet(\bullet\bullet)))))\\
((\bullet\bullet)(\bullet(\bullet\bullet))) &= \bullet \cdot ((\bullet\bullet)(\bullet(\bullet\bullet))) = \tfrac{2}{3}((\bullet\bullet)(\bullet(\bullet(\bullet\bullet))))+ ((\bullet(\bullet\bullet))(\bullet(\bullet\bullet)))\\&\qquad+ \tfrac{1}{6}(\bullet((\bullet\bullet)(\bullet(\bullet\bullet))))+ \tfrac{2}{3}((\bullet\bullet)((\bullet\bullet)(\bullet\bullet)))\\
(\bullet((\bullet\bullet)(\bullet\bullet))) &= \bullet \cdot (\bullet((\bullet\bullet)(\bullet\bullet))) = \tfrac{1}{3}((\bullet((\bullet\bullet)(\bullet\bullet)))\bullet)+ \tfrac{1}{2}(\bullet((\bullet\bullet)(\bullet(\bullet\bullet))))\\&\qquad+ \tfrac{1}{3}((\bullet\bullet)((\bullet\bullet)(\bullet\bullet)))\\
\end{align*}
}

\end{multicols}

\section{Bounds on the inducibility of trees}
\label{app:bounds}

\revise{We provide here} the full list of bounds for the inducibility of trees with up to $11$ leaves. The bounds are given in the form $T\leq I_{11}(T)\leq I_{10}(T)$. If $T$ has more than $10$ leaves, we only give the bound $I_{11}(T)$. The bounds were obtained by numerically solving the SDPs of Section \ref{sec:resultsInd}, and then rounding their numerical certificates to rigorous rational certificates using the procedure described in Section \ref{sec:comp}. Due to the number of digits in the fractions of the resulting rational bounds, we here only give their first few decimals, rounded up. In some cases the bound at level $11$ is slightly worse than the bound at level $10$ after rounding. In those cases the second inequality is marked in red, and should be interpreted as $T\leq \min (I_{11}(T), I_{10}(T))$.
The known inducibilites \revisedelete{from} that we recover (up to precision $10^{-5}$), and the known bounds that we improve are marked with a star.
\begin{multicols}{2}
	{\tiny\allowdisplaybreaks\begin{align*}
    \bullet&\leq 1.0\leq 1.0^*\\
    (\bullet\bullet)&\leq 1.0\leq 1.0^*\\
    (\bullet(\bullet\bullet))&\leq 1.0\leq 1.0^*\\
    ((\bullet\bullet)(\bullet\bullet))&\leq 0.4285724\red{\leq} 0.4285723^*\\
    (\bullet(\bullet(\bullet\bullet)))&\leq 1.0000023\red{\leq} 1.000001^*\\
    ((\bullet\bullet)(\bullet(\bullet\bullet)))&\leq 0.6666669\leq 0.6666692^*\\
    (\bullet((\bullet\bullet)(\bullet\bullet)))&\leq 0.2471566\leq 0.2471585^*\\
    (\bullet(\bullet(\bullet(\bullet\bullet))))&\leq 1.0000001\leq 1.0000003^*\\
    ((\bullet(\bullet\bullet))(\bullet(\bullet\bullet)))&\leq 0.3225817\red{\leq} 0.3225814^*\\
    ((\bullet\bullet)((\bullet\bullet)(\bullet\bullet)))&\leq 0.2073743\red{\leq} 0.2073739\\
    ((\bullet\bullet)(\bullet(\bullet(\bullet\bullet))))&\leq 0.4687508\red{\leq} 0.4687506\\
    (\bullet((\bullet\bullet)(\bullet(\bullet\bullet))))&\leq 0.3411657\leq 0.3411696\\
    (\bullet(\bullet((\bullet\bullet)(\bullet\bullet))))&\leq 0.1914539\leq 0.1914929\\
    (\bullet(\bullet(\bullet(\bullet(\bullet\bullet)))))&\leq 1.0000036\red{\leq} 1.000001^*\\
    ((\bullet(\bullet\bullet))((\bullet\bullet)(\bullet\bullet)))&\leq 0.2380974\red{\leq} 0.2380958^*\\
    ((\bullet(\bullet\bullet))(\bullet(\bullet(\bullet\bullet))))&\leq 0.5468753\leq 0.5468759\\
    ((\bullet\bullet)((\bullet\bullet)(\bullet(\bullet\bullet))))&\leq 0.2472173\leq 0.2472174\\
    ((\bullet\bullet)(\bullet((\bullet\bullet)(\bullet\bullet))))&\leq 0.0880895\leq 0.0881718\\
    ((\bullet\bullet)(\bullet(\bullet(\bullet(\bullet\bullet)))))&\leq 0.3456807\red{\leq} 0.3456797\\
    (\bullet((\bullet(\bullet\bullet))(\bullet(\bullet\bullet))))&\leq 0.1440113\leq 0.1440145\\
    (\bullet((\bullet\bullet)((\bullet\bullet)(\bullet\bullet))))&\leq 0.1048657\leq 0.1048672\\
    (\bullet((\bullet\bullet)(\bullet(\bullet(\bullet\bullet)))))&\leq 0.2086242\leq 0.2086268\\
    (\bullet(\bullet((\bullet\bullet)(\bullet(\bullet\bullet)))))&\leq 0.2557124\leq 0.2557527\\
    (\bullet(\bullet(\bullet((\bullet\bullet)(\bullet\bullet)))))&\leq 0.1689918\leq 0.1690628\\
    (\bullet(\bullet(\bullet(\bullet(\bullet(\bullet\bullet))))))&\leq 1.0000001\leq 1.0000008^*\\
    (((\bullet\bullet)(\bullet\bullet))((\bullet\bullet)(\bullet\bullet)))&\leq 0.050619\red{\leq} 0.0506189^*\\
    (((\bullet\bullet)(\bullet\bullet))(\bullet(\bullet(\bullet\bullet))))&\leq 0.1349838\leq 0.134984\\
    ((\bullet(\bullet(\bullet\bullet)))(\bullet(\bullet(\bullet\bullet))))&\leq 0.2734403\red{\leq} 0.2734386\\
    ((\bullet(\bullet\bullet))((\bullet\bullet)(\bullet(\bullet\bullet))))&\leq 0.2939654\red{\leq} 0.2939636\\
    ((\bullet(\bullet\bullet))(\bullet((\bullet\bullet)(\bullet\bullet))))&\leq 0.1083039\leq 0.1083669\\
    ((\bullet(\bullet\bullet))(\bullet(\bullet(\bullet(\bullet\bullet)))))&\leq 0.4375042\red{\leq} 0.437501\\
    ((\bullet\bullet)((\bullet(\bullet\bullet))(\bullet(\bullet\bullet))))&\leq 0.1092112\leq 0.1092115\\
    ((\bullet\bullet)((\bullet\bullet)((\bullet\bullet)(\bullet\bullet))))&\leq 0.0702087\leq 0.0702093\\
    ((\bullet\bullet)((\bullet\bullet)(\bullet(\bullet(\bullet\bullet)))))&\leq 0.1479366\red{\leq} 0.1479356\\
    ((\bullet\bullet)(\bullet((\bullet\bullet)(\bullet(\bullet\bullet)))))&\leq 0.1117998\leq 0.1118649\\
    ((\bullet\bullet)(\bullet(\bullet((\bullet\bullet)(\bullet\bullet)))))&\leq 0.0607014\leq 0.0608194\\
    ((\bullet\bullet)(\bullet(\bullet(\bullet(\bullet(\bullet\bullet))))))&\leq 0.315595\leq 0.3155953\\
    (\bullet((\bullet(\bullet\bullet))((\bullet\bullet)(\bullet\bullet))))&\leq 0.1006769\leq 0.10068\\
    (\bullet((\bullet(\bullet\bullet))(\bullet(\bullet(\bullet\bullet)))))&\leq 0.218171\leq 0.2181722\\
    (\bullet((\bullet\bullet)((\bullet\bullet)(\bullet(\bullet\bullet)))))&\leq 0.1311494\leq 0.131184\\
    (\bullet((\bullet\bullet)(\bullet((\bullet\bullet)(\bullet\bullet)))))&\leq 0.0445721\leq 0.0449494\\
    (\bullet((\bullet\bullet)(\bullet(\bullet(\bullet(\bullet\bullet))))))&\leq 0.1566008\leq 0.1567041\\
    (\bullet(\bullet((\bullet(\bullet\bullet))(\bullet(\bullet\bullet)))))&\leq 0.110595\leq 0.1105952\\
    (\bullet(\bullet((\bullet\bullet)((\bullet\bullet)(\bullet\bullet)))))&\leq 0.0781389\leq 0.0781907\\
    (\bullet(\bullet((\bullet\bullet)(\bullet(\bullet(\bullet\bullet))))))&\leq 0.1532779\leq 0.1533452\\
    (\bullet(\bullet(\bullet((\bullet\bullet)(\bullet(\bullet\bullet))))))&\leq 0.2231186\leq 0.2232376\\
    (\bullet(\bullet(\bullet(\bullet((\bullet\bullet)(\bullet\bullet))))))&\leq 0.156537\leq 0.1568862\\
    (\bullet(\bullet(\bullet(\bullet(\bullet(\bullet(\bullet\bullet)))))))&\leq 1.0\leq 1.0000012^*\\
    (((\bullet\bullet)(\bullet\bullet))((\bullet\bullet)(\bullet(\bullet\bullet))))&\leq 0.1411782\leq 0.1411783^*\\
    (((\bullet\bullet)(\bullet\bullet))(\bullet((\bullet\bullet)(\bullet\bullet))))&\leq 0.0369633\leq 0.0370008\\
    (((\bullet\bullet)(\bullet\bullet))(\bullet(\bullet(\bullet(\bullet\bullet)))))&\leq 0.1115373\leq 0.1115422\\
    ((\bullet(\bullet(\bullet\bullet)))((\bullet\bullet)(\bullet(\bullet\bullet))))&\leq 0.1922642\leq 0.1922674\\
    ((\bullet(\bullet(\bullet\bullet)))(\bullet((\bullet\bullet)(\bullet\bullet))))&\leq 0.0951453\leq 0.0952781\\
    ((\bullet(\bullet(\bullet\bullet)))(\bullet(\bullet(\bullet(\bullet\bullet)))))&\leq 0.4921905\red{\leq} 0.4921887\\
    ((\bullet(\bullet\bullet))((\bullet(\bullet\bullet))(\bullet(\bullet\bullet))))&\leq 0.1063606\leq 0.106361\\
    ((\bullet(\bullet\bullet))((\bullet\bullet)((\bullet\bullet)(\bullet\bullet))))&\leq 0.0683775\leq 0.0683789\\
    ((\bullet(\bullet\bullet))((\bullet\bullet)(\bullet(\bullet(\bullet\bullet)))))&\leq 0.1538092\red{\leq} 0.1538091\\
    ((\bullet(\bullet\bullet))(\bullet((\bullet\bullet)(\bullet(\bullet\bullet)))))&\leq 0.1120877\leq 0.1124398\\
    ((\bullet(\bullet\bullet))(\bullet(\bullet((\bullet\bullet)(\bullet\bullet)))))&\leq 0.0630144\leq 0.0634229\\
    ((\bullet(\bullet\bullet))(\bullet(\bullet(\bullet(\bullet(\bullet\bullet))))))&\leq 0.3281263\red{\leq} 0.3281259\\
    ((\bullet\bullet)((\bullet(\bullet\bullet))((\bullet\bullet)(\bullet\bullet))))&\leq 0.0767213\leq 0.0767225\\
    ((\bullet\bullet)((\bullet(\bullet\bullet))(\bullet(\bullet(\bullet\bullet)))))&\leq 0.1677236\leq 0.167724\\
    ((\bullet\bullet)((\bullet\bullet)((\bullet\bullet)(\bullet(\bullet\bullet)))))&\leq 0.0835942\leq 0.0835972\\
    ((\bullet\bullet)((\bullet\bullet)(\bullet((\bullet\bullet)(\bullet\bullet)))))&\leq 0.0285892\leq 0.0287976\\
    ((\bullet\bullet)((\bullet\bullet)(\bullet(\bullet(\bullet(\bullet\bullet))))))&\leq 0.1060242\leq 0.1060431\\
    ((\bullet\bullet)(\bullet((\bullet(\bullet\bullet))(\bullet(\bullet\bullet)))))&\leq 0.0456309\leq 0.0457598\\
    ((\bullet\bullet)(\bullet((\bullet\bullet)((\bullet\bullet)(\bullet\bullet)))))&\leq 0.0334688\leq 0.0334714\\
    ((\bullet\bullet)(\bullet((\bullet\bullet)(\bullet(\bullet(\bullet\bullet))))))&\leq 0.0657211\leq 0.0658082\\
    ((\bullet\bullet)(\bullet(\bullet((\bullet\bullet)(\bullet(\bullet\bullet))))))&\leq 0.0786251\leq 0.0787129\\
    ((\bullet\bullet)(\bullet(\bullet(\bullet((\bullet\bullet)(\bullet\bullet))))))&\leq 0.0519508\leq 0.0521146\\
    ((\bullet\bullet)(\bullet(\bullet(\bullet(\bullet(\bullet(\bullet\bullet)))))))&\leq 0.3066941\leq 0.3066951\\
    (\bullet(((\bullet\bullet)(\bullet\bullet))((\bullet\bullet)(\bullet\bullet))))&\leq 0.0204504\leq 0.0204549\\
    (\bullet(((\bullet\bullet)(\bullet\bullet))(\bullet(\bullet(\bullet\bullet)))))&\leq 0.0545345\leq 0.0545396\\
    (\bullet((\bullet(\bullet(\bullet\bullet)))(\bullet(\bullet(\bullet\bullet)))))&\leq 0.1069244\leq 0.1069273\\
    (\bullet((\bullet(\bullet\bullet))((\bullet\bullet)(\bullet(\bullet\bullet)))))&\leq 0.1214086\leq 0.1214095\\
    (\bullet((\bullet(\bullet\bullet))(\bullet((\bullet\bullet)(\bullet\bullet)))))&\leq 0.0430079\leq 0.0434258\\
    (\bullet((\bullet(\bullet\bullet))(\bullet(\bullet(\bullet(\bullet\bullet))))))&\leq 0.1723559\leq 0.172359\\
    (\bullet((\bullet\bullet)((\bullet(\bullet\bullet))(\bullet(\bullet\bullet)))))&\leq 0.0539278\leq 0.0539904\\
    (\bullet((\bullet\bullet)((\bullet\bullet)((\bullet\bullet)(\bullet\bullet)))))&\leq 0.0376985\leq 0.0377181\\
    (\bullet((\bullet\bullet)((\bullet\bullet)(\bullet(\bullet(\bullet\bullet))))))&\leq 0.0701662\leq 0.0703284\\
    (\bullet((\bullet\bullet)(\bullet((\bullet\bullet)(\bullet(\bullet\bullet))))))&\leq 0.057187\leq 0.057908\\
    (\bullet((\bullet\bullet)(\bullet(\bullet((\bullet\bullet)(\bullet\bullet))))))&\leq 0.0297153\leq 0.0305371\\
    (\bullet((\bullet\bullet)(\bullet(\bullet(\bullet(\bullet(\bullet\bullet)))))))&\leq 0.1470463\leq 0.147387\\
    (\bullet(\bullet((\bullet(\bullet\bullet))((\bullet\bullet)(\bullet\bullet)))))&\leq 0.0770303\leq 0.0770328\\
    (\bullet(\bullet((\bullet(\bullet\bullet))(\bullet(\bullet(\bullet\bullet))))))&\leq 0.1674006\leq 0.1674007\\
    (\bullet(\bullet((\bullet\bullet)((\bullet\bullet)(\bullet(\bullet\bullet))))))&\leq 0.0976269\leq 0.097758\\
    (\bullet(\bullet((\bullet\bullet)(\bullet((\bullet\bullet)(\bullet\bullet))))))&\leq 0.0347112\leq 0.0358522\\
    (\bullet(\bullet((\bullet\bullet)(\bullet(\bullet(\bullet(\bullet\bullet)))))))&\leq 0.1170884\leq 0.1180212\\
    (\bullet(\bullet(\bullet((\bullet(\bullet\bullet))(\bullet(\bullet\bullet))))))&\leq 0.0969838\leq 0.0969874\\
    (\bullet(\bullet(\bullet((\bullet\bullet)((\bullet\bullet)(\bullet\bullet))))))&\leq 0.0676847\leq 0.0678276\\
    (\bullet(\bullet(\bullet((\bullet\bullet)(\bullet(\bullet(\bullet\bullet)))))))&\leq 0.1320994\leq 0.132266\\
    (\bullet(\bullet(\bullet(\bullet((\bullet\bullet)(\bullet(\bullet\bullet)))))))&\leq 0.2052418\leq 0.205465\\
    (\bullet(\bullet(\bullet(\bullet(\bullet((\bullet\bullet)(\bullet\bullet)))))))&\leq 0.1486806\leq 0.1493087\\
    (\bullet(\bullet(\bullet(\bullet(\bullet(\bullet(\bullet(\bullet\bullet))))))))&\leq 1.000006\red{\leq} 1.0000011^*\\
    (((\bullet\bullet)(\bullet(\bullet\bullet)))((\bullet\bullet)(\bullet(\bullet\bullet))))&\leq 0.1095907\leq 0.1095919^*\\
    (((\bullet\bullet)(\bullet(\bullet\bullet)))(\bullet((\bullet\bullet)(\bullet\bullet))))&\leq 0.0545185\leq 0.054681\\
    (((\bullet\bullet)(\bullet(\bullet\bullet)))(\bullet(\bullet(\bullet(\bullet\bullet)))))&\leq 0.1641412\leq 0.1641494\\
    (((\bullet\bullet)(\bullet\bullet))((\bullet(\bullet\bullet))(\bullet(\bullet\bullet))))&\leq 0.0568157\leq 0.0568163\\
    (((\bullet\bullet)(\bullet\bullet))((\bullet\bullet)((\bullet\bullet)(\bullet\bullet))))&\leq 0.0365269\leq 0.0365296\\
    (((\bullet\bullet)(\bullet\bullet))((\bullet\bullet)(\bullet(\bullet(\bullet\bullet)))))&\leq 0.0720995\leq 0.0721028\\
    (((\bullet\bullet)(\bullet\bullet))(\bullet((\bullet\bullet)(\bullet(\bullet\bullet)))))&\leq 0.0417168\red{\leq} 0.0417147\\
    (((\bullet\bullet)(\bullet\bullet))(\bullet(\bullet((\bullet\bullet)(\bullet\bullet)))))&\leq 0.020993\leq 0.0212709\\
    (((\bullet\bullet)(\bullet\bullet))(\bullet(\bullet(\bullet(\bullet(\bullet\bullet))))))&\leq 0.1075\leq 0.1075031\\
    ((\bullet((\bullet\bullet)(\bullet\bullet)))(\bullet((\bullet\bullet)(\bullet\bullet))))&\leq 0.0150657\leq 0.0151785\\
    ((\bullet((\bullet\bullet)(\bullet\bullet)))(\bullet(\bullet(\bullet(\bullet\bullet)))))&\leq 0.0660705\leq 0.0677592\\
    ((\bullet(\bullet(\bullet(\bullet\bullet))))(\bullet(\bullet(\bullet(\bullet\bullet)))))&\leq 0.2460954\red{\leq} 0.2460945\\
    ((\bullet(\bullet(\bullet\bullet)))((\bullet(\bullet\bullet))(\bullet(\bullet\bullet))))&\leq 0.0813881\leq 0.081396\\
    ((\bullet(\bullet(\bullet\bullet)))((\bullet\bullet)((\bullet\bullet)(\bullet\bullet))))&\leq 0.0523329\red{\leq} 0.0523192\\
    ((\bullet(\bullet(\bullet\bullet)))((\bullet\bullet)(\bullet(\bullet(\bullet\bullet)))))&\leq 0.1212067\leq 0.1212832\\
    ((\bullet(\bullet(\bullet\bullet)))(\bullet((\bullet\bullet)(\bullet(\bullet\bullet)))))&\leq 0.1075205\leq 0.107755\\
    ((\bullet(\bullet(\bullet\bullet)))(\bullet(\bullet((\bullet\bullet)(\bullet\bullet)))))&\leq 0.070611\leq 0.071158\\
    ((\bullet(\bullet(\bullet\bullet)))(\bullet(\bullet(\bullet(\bullet(\bullet\bullet))))))&\leq 0.4101601\red{\leq} 0.4101573\\
    ((\bullet(\bullet\bullet))((\bullet(\bullet\bullet))((\bullet\bullet)(\bullet\bullet))))&\leq 0.0673361\red{\leq} 0.067335\\
    ((\bullet(\bullet\bullet))((\bullet(\bullet\bullet))(\bullet(\bullet(\bullet\bullet)))))&\leq 0.151672\leq 0.151672\\
    ((\bullet(\bullet\bullet))((\bullet\bullet)((\bullet\bullet)(\bullet(\bullet\bullet)))))&\leq 0.0700246\leq 0.0700337\\
    ((\bullet(\bullet\bullet))((\bullet\bullet)(\bullet((\bullet\bullet)(\bullet\bullet)))))&\leq 0.024991\leq 0.0260398\\
    ((\bullet(\bullet\bullet))((\bullet\bullet)(\bullet(\bullet(\bullet(\bullet\bullet))))))&\leq 0.0958826\leq 0.0959416\\
    ((\bullet(\bullet\bullet))(\bullet((\bullet(\bullet\bullet))(\bullet(\bullet\bullet)))))&\leq 0.0401573\leq 0.0403922\\
    ((\bullet(\bullet\bullet))(\bullet((\bullet\bullet)((\bullet\bullet)(\bullet\bullet)))))&\leq 0.0291906\leq 0.0292522\\
    ((\bullet(\bullet\bullet))(\bullet((\bullet\bullet)(\bullet(\bullet(\bullet\bullet))))))&\leq 0.0580703\leq 0.0582923\\
    ((\bullet(\bullet\bullet))(\bullet(\bullet((\bullet\bullet)(\bullet(\bullet\bullet))))))&\leq 0.0710672\leq 0.0713126\\
    ((\bullet(\bullet\bullet))(\bullet(\bullet(\bullet((\bullet\bullet)(\bullet\bullet))))))&\leq 0.047099\leq 0.0475895\\
    ((\bullet(\bullet\bullet))(\bullet(\bullet(\bullet(\bullet(\bullet(\bullet\bullet)))))))&\leq 0.2773396\leq 0.27734\\
    ((\bullet\bullet)(((\bullet\bullet)(\bullet\bullet))((\bullet\bullet)(\bullet\bullet))))&\leq 0.0157936\leq 0.0158019\\
    ((\bullet\bullet)(((\bullet\bullet)(\bullet\bullet))(\bullet(\bullet(\bullet\bullet)))))&\leq 0.0421139\red{\leq} 0.0421076\\
    ((\bullet\bullet)((\bullet(\bullet(\bullet\bullet)))(\bullet(\bullet(\bullet\bullet)))))&\leq 0.0825958\leq 0.0825965\\
    ((\bullet\bullet)((\bullet(\bullet\bullet))((\bullet\bullet)(\bullet(\bullet\bullet)))))&\leq 0.0916893\leq 0.0916936\\
    ((\bullet\bullet)((\bullet(\bullet\bullet))(\bullet((\bullet\bullet)(\bullet\bullet)))))&\leq 0.0330448\leq 0.0333551\\
    ((\bullet\bullet)((\bullet(\bullet\bullet))(\bullet(\bullet(\bullet(\bullet\bullet))))))&\leq 0.1321533\leq 0.1321539\\
    ((\bullet\bullet)((\bullet\bullet)((\bullet(\bullet\bullet))(\bullet(\bullet\bullet)))))&\leq 0.0355028\leq 0.0355186\\
    ((\bullet\bullet)((\bullet\bullet)((\bullet\bullet)((\bullet\bullet)(\bullet\bullet)))))&\leq 0.0230787\red{\leq} 0.0230724\\
    ((\bullet\bullet)((\bullet\bullet)((\bullet\bullet)(\bullet(\bullet(\bullet\bullet))))))&\leq 0.0447242\leq 0.04474\\
    ((\bullet\bullet)((\bullet\bullet)(\bullet((\bullet\bullet)(\bullet(\bullet\bullet))))))&\leq 0.0357857\leq 0.0360811\\
    ((\bullet\bullet)((\bullet\bullet)(\bullet(\bullet((\bullet\bullet)(\bullet\bullet))))))&\leq 0.0188528\leq 0.0196592\\
    ((\bullet\bullet)((\bullet\bullet)(\bullet(\bullet(\bullet(\bullet(\bullet\bullet)))))))&\leq 0.09544\leq 0.0954771\\
    ((\bullet\bullet)(\bullet((\bullet(\bullet\bullet))((\bullet\bullet)(\bullet\bullet)))))&\leq 0.031395\leq 0.0314173\\
    ((\bullet\bullet)(\bullet((\bullet(\bullet\bullet))(\bullet(\bullet(\bullet\bullet))))))&\leq 0.0679941\leq 0.068037\\
    ((\bullet\bullet)(\bullet((\bullet\bullet)((\bullet\bullet)(\bullet(\bullet\bullet))))))&\leq 0.0413824\leq 0.0415641\\
    ((\bullet\bullet)(\bullet((\bullet\bullet)(\bullet((\bullet\bullet)(\bullet\bullet))))))&\leq 0.0144759\leq 0.0165141\\
    ((\bullet\bullet)(\bullet((\bullet\bullet)(\bullet(\bullet(\bullet(\bullet\bullet)))))))&\leq 0.0490942\leq 0.050696\\
    ((\bullet\bullet)(\bullet(\bullet((\bullet(\bullet\bullet))(\bullet(\bullet\bullet))))))&\leq 0.0334156\leq 0.0334726\\
    ((\bullet\bullet)(\bullet(\bullet((\bullet\bullet)((\bullet\bullet)(\bullet\bullet))))))&\leq 0.0236896\leq 0.0239339\\
    ((\bullet\bullet)(\bullet(\bullet((\bullet\bullet)(\bullet(\bullet(\bullet\bullet)))))))&\leq 0.0465471\leq 0.0470627\\
    ((\bullet\bullet)(\bullet(\bullet(\bullet((\bullet\bullet)(\bullet(\bullet\bullet)))))))&\leq 0.0674928\leq 0.0677231\\
    ((\bullet\bullet)(\bullet(\bullet(\bullet(\bullet((\bullet\bullet)(\bullet\bullet)))))))&\leq 0.0474298\leq 0.0478606\\
    ((\bullet\bullet)(\bullet(\bullet(\bullet(\bullet(\bullet(\bullet(\bullet\bullet))))))))&\leq 0.3020641\leq 0.3020658\\
    (\bullet(((\bullet\bullet)(\bullet\bullet))((\bullet\bullet)(\bullet(\bullet\bullet)))))&\leq 0.0558554\leq 0.0558602\\
    (\bullet(((\bullet\bullet)(\bullet\bullet))(\bullet((\bullet\bullet)(\bullet\bullet)))))&\leq 0.0145222\leq 0.014862\\
    (\bullet(((\bullet\bullet)(\bullet\bullet))(\bullet(\bullet(\bullet(\bullet\bullet))))))&\leq 0.043377\leq 0.0434295\\
    (\bullet((\bullet(\bullet(\bullet\bullet)))((\bullet\bullet)(\bullet(\bullet\bullet)))))&\leq 0.0756829\leq 0.0757179\\
    (\bullet((\bullet(\bullet(\bullet\bullet)))(\bullet((\bullet\bullet)(\bullet\bullet)))))&\leq 0.0371911\leq 0.0377163\\
    (\bullet((\bullet(\bullet(\bullet\bullet)))(\bullet(\bullet(\bullet(\bullet\bullet))))))&\leq 0.1909558\leq 0.1909597\\
    (\bullet((\bullet(\bullet\bullet))((\bullet(\bullet\bullet))(\bullet(\bullet\bullet)))))&\leq 0.0441847\leq 0.0442173\\
    (\bullet((\bullet(\bullet\bullet))((\bullet\bullet)((\bullet\bullet)(\bullet\bullet)))))&\leq 0.0285515\red{\leq} 0.0285476\\
    (\bullet((\bullet(\bullet\bullet))((\bullet\bullet)(\bullet(\bullet(\bullet\bullet))))))&\leq 0.0613023\leq 0.0613409\\
    (\bullet((\bullet(\bullet\bullet))(\bullet((\bullet\bullet)(\bullet(\bullet\bullet))))))&\leq 0.0442962\leq 0.0457139\\
    (\bullet((\bullet(\bullet\bullet))(\bullet(\bullet((\bullet\bullet)(\bullet\bullet))))))&\leq 0.0248663\leq 0.0260477\\
    (\bullet((\bullet(\bullet\bullet))(\bullet(\bullet(\bullet(\bullet(\bullet\bullet)))))))&\leq 0.1282351\leq 0.128252\\
    (\bullet((\bullet\bullet)((\bullet(\bullet\bullet))((\bullet\bullet)(\bullet\bullet)))))&\leq 0.036057\leq 0.0360871\\
    (\bullet((\bullet\bullet)((\bullet(\bullet\bullet))(\bullet(\bullet(\bullet\bullet))))))&\leq 0.0725516\leq 0.0725724\\
    (\bullet((\bullet\bullet)((\bullet\bullet)((\bullet\bullet)(\bullet(\bullet\bullet))))))&\leq 0.0453039\leq 0.0455103\\
    (\bullet((\bullet\bullet)((\bullet\bullet)(\bullet((\bullet\bullet)(\bullet\bullet))))))&\leq 0.0151569\leq 0.0168906\\
    (\bullet((\bullet\bullet)((\bullet\bullet)(\bullet(\bullet(\bullet(\bullet\bullet)))))))&\leq 0.0521377\leq 0.0530988\\
    (\bullet((\bullet\bullet)(\bullet((\bullet(\bullet\bullet))(\bullet(\bullet\bullet))))))&\leq 0.0227812\leq 0.023975\\
    (\bullet((\bullet\bullet)(\bullet((\bullet\bullet)((\bullet\bullet)(\bullet\bullet))))))&\leq 0.0171237\leq 0.017879\\
    (\bullet((\bullet\bullet)(\bullet((\bullet\bullet)(\bullet(\bullet(\bullet\bullet)))))))&\leq 0.0337422\leq 0.0367598\\
    (\bullet((\bullet\bullet)(\bullet(\bullet((\bullet\bullet)(\bullet(\bullet\bullet)))))))&\leq 0.0384892\leq 0.0401641\\
    (\bullet((\bullet\bullet)(\bullet(\bullet(\bullet((\bullet\bullet)(\bullet\bullet)))))))&\leq 0.0247273\leq 0.0267558\\
    (\bullet((\bullet\bullet)(\bullet(\bullet(\bullet(\bullet(\bullet(\bullet\bullet))))))))&\leq 0.1430699\leq 0.1438289\\
    (\bullet(\bullet(((\bullet\bullet)(\bullet\bullet))((\bullet\bullet)(\bullet\bullet)))))&\leq 0.015781\leq 0.0157837\\
    (\bullet(\bullet(((\bullet\bullet)(\bullet\bullet))(\bullet(\bullet(\bullet\bullet))))))&\leq 0.0420831\red{\leq} 0.042071\\
    (\bullet(\bullet((\bullet(\bullet(\bullet\bullet)))(\bullet(\bullet(\bullet\bullet))))))&\leq 0.0825765\leq 0.0825765\\
    (\bullet(\bullet((\bullet(\bullet\bullet))((\bullet\bullet)(\bullet(\bullet\bullet))))))&\leq 0.0916172\leq 0.0916186\\
    (\bullet(\bullet((\bullet(\bullet\bullet))(\bullet((\bullet\bullet)(\bullet\bullet))))))&\leq 0.0330789\leq 0.0335285\\
    (\bullet(\bullet((\bullet(\bullet\bullet))(\bullet(\bullet(\bullet(\bullet\bullet)))))))&\leq 0.1321212\leq 0.1321234\\
    (\bullet(\bullet((\bullet\bullet)((\bullet(\bullet\bullet))(\bullet(\bullet\bullet))))))&\leq 0.0395341\leq 0.0397238\\
    (\bullet(\bullet((\bullet\bullet)((\bullet\bullet)((\bullet\bullet)(\bullet\bullet))))))&\leq 0.0281547\leq 0.0282735\\
    (\bullet(\bullet((\bullet\bullet)((\bullet\bullet)(\bullet(\bullet(\bullet\bullet)))))))&\leq 0.0512426\leq 0.0519989\\
    (\bullet(\bullet((\bullet\bullet)(\bullet((\bullet\bullet)(\bullet(\bullet\bullet)))))))&\leq 0.0444578\leq 0.0464682\\
    (\bullet(\bullet((\bullet\bullet)(\bullet(\bullet((\bullet\bullet)(\bullet\bullet)))))))&\leq 0.0229626\leq 0.0258078\\
    (\bullet(\bullet((\bullet\bullet)(\bullet(\bullet(\bullet(\bullet(\bullet\bullet))))))))&\leq 0.1109042\leq 0.1121893\\
    (\bullet(\bullet(\bullet((\bullet(\bullet\bullet))((\bullet\bullet)(\bullet\bullet))))))&\leq 0.0671513\red{\leq} 0.0671494\\
    (\bullet(\bullet(\bullet((\bullet(\bullet\bullet))(\bullet(\bullet(\bullet\bullet)))))))&\leq 0.1459225\leq 0.1459252\\
    (\bullet(\bullet(\bullet((\bullet\bullet)((\bullet\bullet)(\bullet(\bullet\bullet)))))))&\leq 0.084083\leq 0.0845377\\
    (\bullet(\bullet(\bullet((\bullet\bullet)(\bullet((\bullet\bullet)(\bullet\bullet)))))))&\leq 0.0305436\leq 0.0325929\\
    (\bullet(\bullet(\bullet((\bullet\bullet)(\bullet(\bullet(\bullet(\bullet\bullet))))))))&\leq 0.1015992\leq 0.1036481\\
    (\bullet(\bullet(\bullet(\bullet((\bullet(\bullet\bullet))(\bullet(\bullet\bullet)))))))&\leq 0.089068\leq 0.0890797\\
    (\bullet(\bullet(\bullet(\bullet((\bullet\bullet)((\bullet\bullet)(\bullet\bullet)))))))&\leq 0.0618614\leq 0.0623212\\
    (\bullet(\bullet(\bullet(\bullet((\bullet\bullet)(\bullet(\bullet(\bullet\bullet))))))))&\leq 0.1204158\leq 0.1208088\\
    (\bullet(\bullet(\bullet(\bullet(\bullet((\bullet\bullet)(\bullet(\bullet\bullet))))))))&\leq 0.1938207\leq 0.1940385\\
    (\bullet(\bullet(\bullet(\bullet(\bullet(\bullet((\bullet\bullet)(\bullet\bullet))))))))&\leq 0.1432937\leq 0.1442329\\
    (\bullet(\bullet(\bullet(\bullet(\bullet(\bullet(\bullet(\bullet(\bullet\bullet)))))))))&\leq 1.0000037\red{\leq} 1.0000015^*\\
    (((\bullet\bullet)(\bullet(\bullet\bullet)))((\bullet(\bullet\bullet))(\bullet(\bullet\bullet))))&\leq 0.0971236^*\\
    (((\bullet\bullet)(\bullet(\bullet\bullet)))((\bullet\bullet)((\bullet\bullet)(\bullet\bullet))))&\leq 0.062443\\
    (((\bullet\bullet)(\bullet(\bullet\bullet)))((\bullet\bullet)(\bullet(\bullet(\bullet\bullet)))))&\leq 0.1321834\\
    (((\bullet\bullet)(\bullet(\bullet\bullet)))(\bullet((\bullet\bullet)(\bullet(\bullet\bullet)))))&\leq 0.0656563\\
    (((\bullet\bullet)(\bullet(\bullet\bullet)))(\bullet(\bullet((\bullet\bullet)(\bullet\bullet)))))&\leq 0.0309013\\
    (((\bullet\bullet)(\bullet(\bullet\bullet)))(\bullet(\bullet(\bullet(\bullet(\bullet\bullet))))))&\leq 0.1574119\\
    (((\bullet\bullet)(\bullet\bullet))((\bullet(\bullet\bullet))((\bullet\bullet)(\bullet\bullet))))&\leq 0.0329195\\
    (((\bullet\bullet)(\bullet\bullet))((\bullet(\bullet\bullet))(\bullet(\bullet(\bullet\bullet)))))&\leq 0.0660954\\
    (((\bullet\bullet)(\bullet\bullet))((\bullet\bullet)((\bullet\bullet)(\bullet(\bullet\bullet)))))&\leq 0.0312717\\
    (((\bullet\bullet)(\bullet\bullet))((\bullet\bullet)(\bullet((\bullet\bullet)(\bullet\bullet)))))&\leq 0.0112434\\
    (((\bullet\bullet)(\bullet\bullet))((\bullet\bullet)(\bullet(\bullet(\bullet(\bullet\bullet))))))&\leq 0.0396759\\
    (((\bullet\bullet)(\bullet\bullet))(\bullet((\bullet(\bullet\bullet))(\bullet(\bullet\bullet)))))&\leq 0.0155952\\
    (((\bullet\bullet)(\bullet\bullet))(\bullet((\bullet\bullet)((\bullet\bullet)(\bullet\bullet)))))&\leq 0.0112518\\
    (((\bullet\bullet)(\bullet\bullet))(\bullet((\bullet\bullet)(\bullet(\bullet(\bullet\bullet))))))&\leq 0.022193\\
    (((\bullet\bullet)(\bullet\bullet))(\bullet(\bullet((\bullet\bullet)(\bullet(\bullet\bullet))))))&\leq 0.0268531\\
    (((\bullet\bullet)(\bullet\bullet))(\bullet(\bullet(\bullet((\bullet\bullet)(\bullet\bullet))))))&\leq 0.0177505\\
    (((\bullet\bullet)(\bullet\bullet))(\bullet(\bullet(\bullet(\bullet(\bullet(\bullet\bullet)))))))&\leq 0.1045095\\
    ((\bullet((\bullet\bullet)(\bullet\bullet)))((\bullet(\bullet\bullet))(\bullet(\bullet\bullet))))&\leq 0.0226776\\
    ((\bullet((\bullet\bullet)(\bullet\bullet)))((\bullet\bullet)((\bullet\bullet)(\bullet\bullet))))&\leq 0.016968\\
    ((\bullet((\bullet\bullet)(\bullet\bullet)))((\bullet\bullet)(\bullet(\bullet(\bullet\bullet)))))&\leq 0.0280677\\
    ((\bullet((\bullet\bullet)(\bullet\bullet)))(\bullet((\bullet\bullet)(\bullet(\bullet\bullet)))))&\leq 0.0380457\\
    ((\bullet((\bullet\bullet)(\bullet\bullet)))(\bullet(\bullet((\bullet\bullet)(\bullet\bullet)))))&\leq 0.018366\\
    ((\bullet((\bullet\bullet)(\bullet\bullet)))(\bullet(\bullet(\bullet(\bullet(\bullet\bullet))))))&\leq 0.0585165\\
    ((\bullet(\bullet(\bullet(\bullet\bullet))))((\bullet(\bullet\bullet))(\bullet(\bullet\bullet))))&\leq 0.0762186\\
    ((\bullet(\bullet(\bullet(\bullet\bullet))))((\bullet\bullet)((\bullet\bullet)(\bullet\bullet))))&\leq 0.0490018\\
    ((\bullet(\bullet(\bullet(\bullet\bullet))))((\bullet\bullet)(\bullet(\bullet(\bullet\bullet)))))&\leq 0.1106769\\
    ((\bullet(\bullet(\bullet(\bullet\bullet))))(\bullet((\bullet\bullet)(\bullet(\bullet\bullet)))))&\leq 0.0808743\\
    ((\bullet(\bullet(\bullet(\bullet\bullet))))(\bullet(\bullet((\bullet\bullet)(\bullet\bullet)))))&\leq 0.0636934\\
    ((\bullet(\bullet(\bullet(\bullet\bullet))))(\bullet(\bullet(\bullet(\bullet(\bullet\bullet))))))&\leq 0.4511739\\
    ((\bullet(\bullet(\bullet\bullet)))((\bullet(\bullet\bullet))((\bullet\bullet)(\bullet\bullet))))&\leq 0.0583844\\
    ((\bullet(\bullet(\bullet\bullet)))((\bullet(\bullet\bullet))(\bullet(\bullet(\bullet\bullet)))))&\leq 0.1333566\\
    ((\bullet(\bullet(\bullet\bullet)))((\bullet\bullet)((\bullet\bullet)(\bullet(\bullet\bullet)))))&\leq 0.0605023\\
    ((\bullet(\bullet(\bullet\bullet)))((\bullet\bullet)(\bullet((\bullet\bullet)(\bullet\bullet)))))&\leq 0.021838\\
    ((\bullet(\bullet(\bullet\bullet)))((\bullet\bullet)(\bullet(\bullet(\bullet(\bullet\bullet))))))&\leq 0.0847542\\
    ((\bullet(\bullet(\bullet\bullet)))(\bullet((\bullet(\bullet\bullet))(\bullet(\bullet\bullet)))))&\leq 0.0352903\\
    ((\bullet(\bullet(\bullet\bullet)))(\bullet((\bullet\bullet)((\bullet\bullet)(\bullet\bullet)))))&\leq 0.0261751\\
    ((\bullet(\bullet(\bullet\bullet)))(\bullet((\bullet\bullet)(\bullet(\bullet(\bullet\bullet))))))&\leq 0.0533813\\
    ((\bullet(\bullet(\bullet\bullet)))(\bullet(\bullet((\bullet\bullet)(\bullet(\bullet\bullet))))))&\leq 0.072829\\
    ((\bullet(\bullet(\bullet\bullet)))(\bullet(\bullet(\bullet((\bullet\bullet)(\bullet\bullet))))))&\leq 0.0517956\\
    ((\bullet(\bullet(\bullet\bullet)))(\bullet(\bullet(\bullet(\bullet(\bullet(\bullet\bullet)))))))&\leq 0.3222706\\
    ((\bullet(\bullet\bullet))(((\bullet\bullet)(\bullet\bullet))((\bullet\bullet)(\bullet\bullet))))&\leq 0.0136125\\
    ((\bullet(\bullet\bullet))(((\bullet\bullet)(\bullet\bullet))(\bullet(\bullet(\bullet\bullet)))))&\leq 0.0362962\\
    ((\bullet(\bullet\bullet))((\bullet(\bullet(\bullet\bullet)))(\bullet(\bullet(\bullet\bullet)))))&\leq 0.072188\\
    ((\bullet(\bullet\bullet))((\bullet(\bullet\bullet))((\bullet\bullet)(\bullet(\bullet\bullet)))))&\leq 0.0790186\\
    ((\bullet(\bullet\bullet))((\bullet(\bullet\bullet))(\bullet((\bullet\bullet)(\bullet\bullet)))))&\leq 0.0288268\\
    ((\bullet(\bullet\bullet))((\bullet(\bullet\bullet))(\bullet(\bullet(\bullet(\bullet\bullet))))))&\leq 0.1155022\\
    ((\bullet(\bullet\bullet))((\bullet\bullet)((\bullet(\bullet\bullet))(\bullet(\bullet\bullet)))))&\leq 0.0293984\\
    ((\bullet(\bullet\bullet))((\bullet\bullet)((\bullet\bullet)((\bullet\bullet)(\bullet\bullet)))))&\leq 0.0189046\\
    ((\bullet(\bullet\bullet))((\bullet\bullet)((\bullet\bullet)(\bullet(\bullet(\bullet\bullet))))))&\leq 0.0390967\\
    ((\bullet(\bullet\bullet))((\bullet\bullet)(\bullet((\bullet\bullet)(\bullet(\bullet\bullet))))))&\leq 0.0300035\\
    ((\bullet(\bullet\bullet))((\bullet\bullet)(\bullet(\bullet((\bullet\bullet)(\bullet\bullet))))))&\leq 0.0162861\\
    ((\bullet(\bullet\bullet))((\bullet\bullet)(\bullet(\bullet(\bullet(\bullet(\bullet\bullet)))))))&\leq 0.0833444\\
    ((\bullet(\bullet\bullet))(\bullet((\bullet(\bullet\bullet))((\bullet\bullet)(\bullet\bullet)))))&\leq 0.0266453\\
    ((\bullet(\bullet\bullet))(\bullet((\bullet(\bullet\bullet))(\bullet(\bullet(\bullet\bullet))))))&\leq 0.0576713\\
    ((\bullet(\bullet\bullet))(\bullet((\bullet\bullet)((\bullet\bullet)(\bullet(\bullet\bullet))))))&\leq 0.0348056\\
    ((\bullet(\bullet\bullet))(\bullet((\bullet\bullet)(\bullet((\bullet\bullet)(\bullet\bullet))))))&\leq 0.01262\\
    ((\bullet(\bullet\bullet))(\bullet((\bullet\bullet)(\bullet(\bullet(\bullet(\bullet\bullet)))))))&\leq 0.0417166\\
    ((\bullet(\bullet\bullet))(\bullet(\bullet((\bullet(\bullet\bullet))(\bullet(\bullet\bullet))))))&\leq 0.0292303\\
    ((\bullet(\bullet\bullet))(\bullet(\bullet((\bullet\bullet)((\bullet\bullet)(\bullet\bullet))))))&\leq 0.0207011\\
    ((\bullet(\bullet\bullet))(\bullet(\bullet((\bullet\bullet)(\bullet(\bullet(\bullet\bullet)))))))&\leq 0.0406586\\
    ((\bullet(\bullet\bullet))(\bullet(\bullet(\bullet((\bullet\bullet)(\bullet(\bullet\bullet)))))))&\leq 0.0590266\\
    ((\bullet(\bullet\bullet))(\bullet(\bullet(\bullet(\bullet((\bullet\bullet)(\bullet\bullet)))))))&\leq 0.0415539\\
    ((\bullet(\bullet\bullet))(\bullet(\bullet(\bullet(\bullet(\bullet(\bullet(\bullet\bullet))))))))&\leq 0.2640024\\
    ((\bullet\bullet)(((\bullet\bullet)(\bullet\bullet))((\bullet\bullet)(\bullet(\bullet\bullet)))))&\leq 0.0430452\\
    ((\bullet\bullet)(((\bullet\bullet)(\bullet\bullet))(\bullet((\bullet\bullet)(\bullet\bullet)))))&\leq 0.0111632\\
    ((\bullet\bullet)(((\bullet\bullet)(\bullet\bullet))(\bullet(\bullet(\bullet(\bullet\bullet))))))&\leq 0.0334361\\
    ((\bullet\bullet)((\bullet(\bullet(\bullet\bullet)))((\bullet\bullet)(\bullet(\bullet\bullet)))))&\leq 0.0581597\\
    ((\bullet\bullet)((\bullet(\bullet(\bullet\bullet)))(\bullet((\bullet\bullet)(\bullet\bullet)))))&\leq 0.0285765\\
    ((\bullet\bullet)((\bullet(\bullet(\bullet\bullet)))(\bullet(\bullet(\bullet(\bullet\bullet))))))&\leq 0.147039\\
    ((\bullet\bullet)((\bullet(\bullet\bullet))((\bullet(\bullet\bullet))(\bullet(\bullet\bullet)))))&\leq 0.0328586\\
    ((\bullet\bullet)((\bullet(\bullet\bullet))((\bullet\bullet)((\bullet\bullet)(\bullet\bullet)))))&\leq 0.0211295\\
    ((\bullet\bullet)((\bullet(\bullet\bullet))((\bullet\bullet)(\bullet(\bullet(\bullet\bullet))))))&\leq 0.0459686\\
    ((\bullet\bullet)((\bullet(\bullet\bullet))(\bullet((\bullet\bullet)(\bullet(\bullet\bullet))))))&\leq 0.0337921\\
    ((\bullet\bullet)((\bullet(\bullet\bullet))(\bullet(\bullet((\bullet\bullet)(\bullet\bullet))))))&\leq 0.0190919\\
    ((\bullet\bullet)((\bullet(\bullet\bullet))(\bullet(\bullet(\bullet(\bullet(\bullet\bullet)))))))&\leq 0.0980288\\
    ((\bullet\bullet)((\bullet\bullet)((\bullet(\bullet\bullet))((\bullet\bullet)(\bullet\bullet)))))&\leq 0.0240934\\
    ((\bullet\bullet)((\bullet\bullet)((\bullet(\bullet\bullet))(\bullet(\bullet(\bullet\bullet))))))&\leq 0.0501322\\
    ((\bullet\bullet)((\bullet\bullet)((\bullet\bullet)((\bullet\bullet)(\bullet(\bullet\bullet))))))&\leq 0.0276363\\
    ((\bullet\bullet)((\bullet\bullet)((\bullet\bullet)(\bullet((\bullet\bullet)(\bullet\bullet))))))&\leq 0.0094195\\
    ((\bullet\bullet)((\bullet\bullet)((\bullet\bullet)(\bullet(\bullet(\bullet(\bullet\bullet)))))))&\leq 0.0323844\\
    ((\bullet\bullet)((\bullet\bullet)(\bullet((\bullet(\bullet\bullet))(\bullet(\bullet\bullet))))))&\leq 0.0144613\\
    ((\bullet\bullet)((\bullet\bullet)(\bullet((\bullet\bullet)((\bullet\bullet)(\bullet\bullet))))))&\leq 0.0106603\\
    ((\bullet\bullet)((\bullet\bullet)(\bullet((\bullet\bullet)(\bullet(\bullet(\bullet\bullet)))))))&\leq 0.0207861\\
    ((\bullet\bullet)((\bullet\bullet)(\bullet(\bullet((\bullet\bullet)(\bullet(\bullet\bullet)))))))&\leq 0.0242244\\
    ((\bullet\bullet)((\bullet\bullet)(\bullet(\bullet(\bullet((\bullet\bullet)(\bullet\bullet)))))))&\leq 0.0158174\\
    ((\bullet\bullet)((\bullet\bullet)(\bullet(\bullet(\bullet(\bullet(\bullet(\bullet\bullet))))))))&\leq 0.0917822\\
    ((\bullet\bullet)(\bullet(((\bullet\bullet)(\bullet\bullet))((\bullet\bullet)(\bullet\bullet)))))&\leq 0.0063735\\
    ((\bullet\bullet)(\bullet(((\bullet\bullet)(\bullet\bullet))(\bullet(\bullet(\bullet\bullet))))))&\leq 0.0169653\\
    ((\bullet\bullet)(\bullet((\bullet(\bullet(\bullet\bullet)))(\bullet(\bullet(\bullet\bullet))))))&\leq 0.0332716\\
    ((\bullet\bullet)(\bullet((\bullet(\bullet\bullet))((\bullet\bullet)(\bullet(\bullet\bullet))))))&\leq 0.0373028\\
    ((\bullet\bullet)(\bullet((\bullet(\bullet\bullet))(\bullet((\bullet\bullet)(\bullet\bullet))))))&\leq 0.0135754\\
    ((\bullet\bullet)(\bullet((\bullet(\bullet\bullet))(\bullet(\bullet(\bullet(\bullet\bullet)))))))&\leq 0.0533437\\
    ((\bullet\bullet)(\bullet((\bullet\bullet)((\bullet(\bullet\bullet))(\bullet(\bullet\bullet))))))&\leq 0.0167977\\
    ((\bullet\bullet)(\bullet((\bullet\bullet)((\bullet\bullet)((\bullet\bullet)(\bullet\bullet))))))&\leq 0.0118462\\
    ((\bullet\bullet)(\bullet((\bullet\bullet)((\bullet\bullet)(\bullet(\bullet(\bullet\bullet)))))))&\leq 0.0218306\\
    ((\bullet\bullet)(\bullet((\bullet\bullet)(\bullet((\bullet\bullet)(\bullet(\bullet\bullet)))))))&\leq 0.0185545\\
    ((\bullet\bullet)(\bullet((\bullet\bullet)(\bullet(\bullet((\bullet\bullet)(\bullet\bullet)))))))&\leq 0.01\\
    ((\bullet\bullet)(\bullet((\bullet\bullet)(\bullet(\bullet(\bullet(\bullet(\bullet\bullet))))))))&\leq 0.0460062\\
    ((\bullet\bullet)(\bullet(\bullet((\bullet(\bullet\bullet))((\bullet\bullet)(\bullet\bullet))))))&\leq 0.0230307\\
    ((\bullet\bullet)(\bullet(\bullet((\bullet(\bullet\bullet))(\bullet(\bullet(\bullet\bullet)))))))&\leq 0.0500705\\
    ((\bullet\bullet)(\bullet(\bullet((\bullet\bullet)((\bullet\bullet)(\bullet(\bullet\bullet)))))))&\leq 0.0293239\\
    ((\bullet\bullet)(\bullet(\bullet((\bullet\bullet)(\bullet((\bullet\bullet)(\bullet\bullet)))))))&\leq 0.0111047\\
    ((\bullet\bullet)(\bullet(\bullet((\bullet\bullet)(\bullet(\bullet(\bullet(\bullet\bullet))))))))&\leq 0.0357995\\
    ((\bullet\bullet)(\bullet(\bullet(\bullet((\bullet(\bullet\bullet))(\bullet(\bullet\bullet)))))))&\leq 0.0290179\\
    ((\bullet\bullet)(\bullet(\bullet(\bullet((\bullet\bullet)((\bullet\bullet)(\bullet\bullet)))))))&\leq 0.020317\\
    ((\bullet\bullet)(\bullet(\bullet(\bullet((\bullet\bullet)(\bullet(\bullet(\bullet\bullet))))))))&\leq 0.0396144\\
    ((\bullet\bullet)(\bullet(\bullet(\bullet(\bullet((\bullet\bullet)(\bullet(\bullet\bullet))))))))&\leq 0.0614394\\
    ((\bullet\bullet)(\bullet(\bullet(\bullet(\bullet(\bullet((\bullet\bullet)(\bullet\bullet))))))))&\leq 0.0446234\\
    ((\bullet\bullet)(\bullet(\bullet(\bullet(\bullet(\bullet(\bullet(\bullet(\bullet\bullet)))))))))&\leq 0.298744\\
    (\bullet(((\bullet\bullet)(\bullet(\bullet\bullet)))((\bullet\bullet)(\bullet(\bullet\bullet)))))&\leq 0.0427034\\
    (\bullet(((\bullet\bullet)(\bullet(\bullet\bullet)))(\bullet((\bullet\bullet)(\bullet\bullet)))))&\leq 0.0212429\\
    (\bullet(((\bullet\bullet)(\bullet(\bullet\bullet)))(\bullet(\bullet(\bullet(\bullet\bullet))))))&\leq 0.063534\\
    (\bullet(((\bullet\bullet)(\bullet\bullet))((\bullet(\bullet\bullet))(\bullet(\bullet\bullet)))))&\leq 0.0222191\\
    (\bullet(((\bullet\bullet)(\bullet\bullet))((\bullet\bullet)((\bullet\bullet)(\bullet\bullet)))))&\leq 0.0142899\\
    (\bullet(((\bullet\bullet)(\bullet\bullet))((\bullet\bullet)(\bullet(\bullet(\bullet\bullet))))))&\leq 0.0279132\\
    (\bullet(((\bullet\bullet)(\bullet\bullet))(\bullet((\bullet\bullet)(\bullet(\bullet\bullet))))))&\leq 0.0162353\\
    (\bullet(((\bullet\bullet)(\bullet\bullet))(\bullet(\bullet((\bullet\bullet)(\bullet\bullet))))))&\leq 0.0083416\\
    (\bullet(((\bullet\bullet)(\bullet\bullet))(\bullet(\bullet(\bullet(\bullet(\bullet\bullet)))))))&\leq 0.0415536\\
    (\bullet((\bullet((\bullet\bullet)(\bullet\bullet)))(\bullet((\bullet\bullet)(\bullet\bullet)))))&\leq 0.0059523\\
    (\bullet((\bullet((\bullet\bullet)(\bullet\bullet)))(\bullet(\bullet(\bullet(\bullet\bullet))))))&\leq 0.0258215\\
    (\bullet((\bullet(\bullet(\bullet(\bullet\bullet))))(\bullet(\bullet(\bullet(\bullet\bullet))))))&\leq 0.0949084\\
    (\bullet((\bullet(\bullet(\bullet\bullet)))((\bullet(\bullet\bullet))(\bullet(\bullet\bullet)))))&\leq 0.0324341\\
    (\bullet((\bullet(\bullet(\bullet\bullet)))((\bullet\bullet)((\bullet\bullet)(\bullet\bullet)))))&\leq 0.0207928\\
    (\bullet((\bullet(\bullet(\bullet\bullet)))((\bullet\bullet)(\bullet(\bullet(\bullet\bullet))))))&\leq 0.0470275\\
    (\bullet((\bullet(\bullet(\bullet\bullet)))(\bullet((\bullet\bullet)(\bullet(\bullet\bullet))))))&\leq 0.0417464\\
    (\bullet((\bullet(\bullet(\bullet\bullet)))(\bullet(\bullet((\bullet\bullet)(\bullet\bullet))))))&\leq 0.0275483\\
    (\bullet((\bullet(\bullet(\bullet\bullet)))(\bullet(\bullet(\bullet(\bullet(\bullet\bullet)))))))&\leq 0.1582911\\
    (\bullet((\bullet(\bullet\bullet))((\bullet(\bullet\bullet))((\bullet\bullet)(\bullet\bullet)))))&\leq 0.0286278\\
    (\bullet((\bullet(\bullet\bullet))((\bullet(\bullet\bullet))(\bullet(\bullet(\bullet\bullet))))))&\leq 0.0599812\\
    (\bullet((\bullet(\bullet\bullet))((\bullet\bullet)((\bullet\bullet)(\bullet(\bullet\bullet))))))&\leq 0.0303138\\
    (\bullet((\bullet(\bullet\bullet))((\bullet\bullet)(\bullet((\bullet\bullet)(\bullet\bullet))))))&\leq 0.0111186\\
    (\bullet((\bullet(\bullet\bullet))((\bullet\bullet)(\bullet(\bullet(\bullet(\bullet\bullet)))))))&\leq 0.0383284\\
    (\bullet((\bullet(\bullet\bullet))(\bullet((\bullet(\bullet\bullet))(\bullet(\bullet\bullet))))))&\leq 0.016262\\
    (\bullet((\bullet(\bullet\bullet))(\bullet((\bullet\bullet)((\bullet\bullet)(\bullet\bullet))))))&\leq 0.0116694\\
    (\bullet((\bullet(\bullet\bullet))(\bullet((\bullet\bullet)(\bullet(\bullet(\bullet\bullet)))))))&\leq 0.0235487\\
    (\bullet((\bullet(\bullet\bullet))(\bullet(\bullet((\bullet\bullet)(\bullet(\bullet\bullet)))))))&\leq 0.0280953\\
    (\bullet((\bullet(\bullet\bullet))(\bullet(\bullet(\bullet((\bullet\bullet)(\bullet\bullet)))))))&\leq 0.0188036\\
    (\bullet((\bullet(\bullet\bullet))(\bullet(\bullet(\bullet(\bullet(\bullet(\bullet\bullet))))))))&\leq 0.1085368\\
    (\bullet((\bullet\bullet)(((\bullet\bullet)(\bullet\bullet))((\bullet\bullet)(\bullet\bullet)))))&\leq 0.0069791\\
    (\bullet((\bullet\bullet)(((\bullet\bullet)(\bullet\bullet))(\bullet(\bullet(\bullet\bullet))))))&\leq 0.0185711\\
    (\bullet((\bullet\bullet)((\bullet(\bullet(\bullet\bullet)))(\bullet(\bullet(\bullet\bullet))))))&\leq 0.0342061\\
    (\bullet((\bullet\bullet)((\bullet(\bullet\bullet))((\bullet\bullet)(\bullet(\bullet\bullet))))))&\leq 0.0425295\\
    (\bullet((\bullet\bullet)((\bullet(\bullet\bullet))(\bullet((\bullet\bullet)(\bullet\bullet))))))&\leq 0.014193\\
    (\bullet((\bullet\bullet)((\bullet(\bullet\bullet))(\bullet(\bullet(\bullet(\bullet\bullet)))))))&\leq 0.0554156\\
    (\bullet((\bullet\bullet)((\bullet\bullet)((\bullet(\bullet\bullet))(\bullet(\bullet\bullet))))))&\leq 0.0181066\\
    (\bullet((\bullet\bullet)((\bullet\bullet)((\bullet\bullet)((\bullet\bullet)(\bullet\bullet))))))&\leq 0.0128573\\
    (\bullet((\bullet\bullet)((\bullet\bullet)((\bullet\bullet)(\bullet(\bullet(\bullet\bullet)))))))&\leq 0.0233441\\
    (\bullet((\bullet\bullet)((\bullet\bullet)(\bullet((\bullet\bullet)(\bullet(\bullet\bullet)))))))&\leq 0.019107\\
    (\bullet((\bullet\bullet)((\bullet\bullet)(\bullet(\bullet((\bullet\bullet)(\bullet\bullet)))))))&\leq 0.0103594\\
    (\bullet((\bullet\bullet)((\bullet\bullet)(\bullet(\bullet(\bullet(\bullet(\bullet\bullet))))))))&\leq 0.048504\\
    (\bullet((\bullet\bullet)(\bullet((\bullet(\bullet\bullet))((\bullet\bullet)(\bullet\bullet))))))&\leq 0.015401\\
    (\bullet((\bullet\bullet)(\bullet((\bullet(\bullet\bullet))(\bullet(\bullet(\bullet\bullet)))))))&\leq 0.0333184\\
    (\bullet((\bullet\bullet)(\bullet((\bullet\bullet)((\bullet\bullet)(\bullet(\bullet\bullet)))))))&\leq 0.0212784\\
    (\bullet((\bullet\bullet)(\bullet((\bullet\bullet)(\bullet((\bullet\bullet)(\bullet\bullet)))))))&\leq 0.009088\\
    (\bullet((\bullet\bullet)(\bullet((\bullet\bullet)(\bullet(\bullet(\bullet(\bullet\bullet))))))))&\leq 0.0268518\\
    (\bullet((\bullet\bullet)(\bullet(\bullet((\bullet(\bullet\bullet))(\bullet(\bullet\bullet)))))))&\leq 0.0161156\\
    (\bullet((\bullet\bullet)(\bullet(\bullet((\bullet\bullet)((\bullet\bullet)(\bullet\bullet)))))))&\leq 0.0118507\\
    (\bullet((\bullet\bullet)(\bullet(\bullet((\bullet\bullet)(\bullet(\bullet(\bullet\bullet))))))))&\leq 0.023627\\
    (\bullet((\bullet\bullet)(\bullet(\bullet(\bullet((\bullet\bullet)(\bullet(\bullet\bullet))))))))&\leq 0.0321444\\
    (\bullet((\bullet\bullet)(\bullet(\bullet(\bullet(\bullet((\bullet\bullet)(\bullet\bullet))))))))&\leq 0.0226679\\
    (\bullet((\bullet\bullet)(\bullet(\bullet(\bullet(\bullet(\bullet(\bullet(\bullet\bullet)))))))))&\leq 0.140389\\
    (\bullet(\bullet(((\bullet\bullet)(\bullet\bullet))((\bullet\bullet)(\bullet(\bullet\bullet))))))&\leq 0.0429334\\
    (\bullet(\bullet(((\bullet\bullet)(\bullet\bullet))(\bullet((\bullet\bullet)(\bullet\bullet))))))&\leq 0.0111578\\
    (\bullet(\bullet(((\bullet\bullet)(\bullet\bullet))(\bullet(\bullet(\bullet(\bullet\bullet)))))))&\leq 0.0334201\\
    (\bullet(\bullet((\bullet(\bullet(\bullet\bullet)))((\bullet\bullet)(\bullet(\bullet\bullet))))))&\leq 0.058034\\
    (\bullet(\bullet((\bullet(\bullet(\bullet\bullet)))(\bullet((\bullet\bullet)(\bullet\bullet))))))&\leq 0.0285983\\
    (\bullet(\bullet((\bullet(\bullet(\bullet\bullet)))(\bullet(\bullet(\bullet(\bullet\bullet)))))))&\leq 0.1470347\\
    (\bullet(\bullet((\bullet(\bullet\bullet))((\bullet(\bullet\bullet))(\bullet(\bullet\bullet))))))&\leq 0.0330857\\
    (\bullet(\bullet((\bullet(\bullet\bullet))((\bullet\bullet)((\bullet\bullet)(\bullet\bullet))))))&\leq 0.0212749\\
    (\bullet(\bullet((\bullet(\bullet\bullet))((\bullet\bullet)(\bullet(\bullet(\bullet\bullet)))))))&\leq 0.0459748\\
    (\bullet(\bullet((\bullet(\bullet\bullet))(\bullet((\bullet\bullet)(\bullet(\bullet\bullet)))))))&\leq 0.0338421\\
    (\bullet(\bullet((\bullet(\bullet\bullet))(\bullet(\bullet((\bullet\bullet)(\bullet\bullet)))))))&\leq 0.0191234\\
    (\bullet(\bullet((\bullet(\bullet\bullet))(\bullet(\bullet(\bullet(\bullet(\bullet\bullet))))))))&\leq 0.0980258\\
    (\bullet(\bullet((\bullet\bullet)((\bullet(\bullet\bullet))((\bullet\bullet)(\bullet\bullet))))))&\leq 0.0260933\\
    (\bullet(\bullet((\bullet\bullet)((\bullet(\bullet\bullet))(\bullet(\bullet(\bullet\bullet)))))))&\leq 0.0517623\\
    (\bullet(\bullet((\bullet\bullet)((\bullet\bullet)((\bullet\bullet)(\bullet(\bullet\bullet)))))))&\leq 0.0337782\\
    (\bullet(\bullet((\bullet\bullet)((\bullet\bullet)(\bullet((\bullet\bullet)(\bullet\bullet)))))))&\leq 0.0118\\
    (\bullet(\bullet((\bullet\bullet)((\bullet\bullet)(\bullet(\bullet(\bullet(\bullet\bullet))))))))&\leq 0.038892\\
    (\bullet(\bullet((\bullet\bullet)(\bullet((\bullet(\bullet\bullet))(\bullet(\bullet\bullet)))))))&\leq 0.0174256\\
    (\bullet(\bullet((\bullet\bullet)(\bullet((\bullet\bullet)((\bullet\bullet)(\bullet\bullet)))))))&\leq 0.0131765\\
    (\bullet(\bullet((\bullet\bullet)(\bullet((\bullet\bullet)(\bullet(\bullet(\bullet\bullet))))))))&\leq 0.0270876\\
    (\bullet(\bullet((\bullet\bullet)(\bullet(\bullet((\bullet\bullet)(\bullet(\bullet\bullet))))))))&\leq 0.0297785\\
    (\bullet(\bullet((\bullet\bullet)(\bullet(\bullet(\bullet((\bullet\bullet)(\bullet\bullet))))))))&\leq 0.0192499\\
    (\bullet(\bullet((\bullet\bullet)(\bullet(\bullet(\bullet(\bullet(\bullet(\bullet\bullet)))))))))&\leq 0.1077698\\
    (\bullet(\bullet(\bullet(((\bullet\bullet)(\bullet\bullet))((\bullet\bullet)(\bullet\bullet))))))&\leq 0.0136964\\
    (\bullet(\bullet(\bullet(((\bullet\bullet)(\bullet\bullet))(\bullet(\bullet(\bullet\bullet)))))))&\leq 0.0364981\\
    (\bullet(\bullet(\bullet((\bullet(\bullet(\bullet\bullet)))(\bullet(\bullet(\bullet\bullet)))))))&\leq 0.0716329\\
    (\bullet(\bullet(\bullet((\bullet(\bullet\bullet))((\bullet\bullet)(\bullet(\bullet\bullet)))))))&\leq 0.0794783\\
    (\bullet(\bullet(\bullet((\bullet(\bullet\bullet))(\bullet((\bullet\bullet)(\bullet\bullet)))))))&\leq 0.0287851\\
    (\bullet(\bullet(\bullet((\bullet(\bullet\bullet))(\bullet(\bullet(\bullet(\bullet\bullet))))))))&\leq 0.1146128\\
    (\bullet(\bullet(\bullet((\bullet\bullet)((\bullet(\bullet\bullet))(\bullet(\bullet\bullet)))))))&\leq 0.0339684\\
    (\bullet(\bullet(\bullet((\bullet\bullet)((\bullet\bullet)((\bullet\bullet)(\bullet\bullet)))))))&\leq 0.0241595\\
    (\bullet(\bullet(\bullet((\bullet\bullet)((\bullet\bullet)(\bullet(\bullet(\bullet\bullet))))))))&\leq 0.0438161\\
    (\bullet(\bullet(\bullet((\bullet\bullet)(\bullet((\bullet\bullet)(\bullet(\bullet\bullet))))))))&\leq 0.0391523\\
    (\bullet(\bullet(\bullet((\bullet\bullet)(\bullet(\bullet((\bullet\bullet)(\bullet\bullet))))))))&\leq 0.0207505\\
    (\bullet(\bullet(\bullet((\bullet\bullet)(\bullet(\bullet(\bullet(\bullet(\bullet\bullet)))))))))&\leq 0.0960874\\
    (\bullet(\bullet(\bullet(\bullet((\bullet(\bullet\bullet))((\bullet\bullet)(\bullet\bullet)))))))&\leq 0.0613767\\
    (\bullet(\bullet(\bullet(\bullet((\bullet(\bullet\bullet))(\bullet(\bullet(\bullet\bullet))))))))&\leq 0.1333636\\
    (\bullet(\bullet(\bullet(\bullet((\bullet\bullet)((\bullet\bullet)(\bullet(\bullet\bullet))))))))&\leq 0.0765637\\
    (\bullet(\bullet(\bullet(\bullet((\bullet\bullet)(\bullet((\bullet\bullet)(\bullet\bullet))))))))&\leq 0.0284654\\
    (\bullet(\bullet(\bullet(\bullet((\bullet\bullet)(\bullet(\bullet(\bullet(\bullet\bullet)))))))))&\leq 0.0931223\\
    (\bullet(\bullet(\bullet(\bullet(\bullet((\bullet(\bullet\bullet))(\bullet(\bullet\bullet))))))))&\leq 0.08385\\
    (\bullet(\bullet(\bullet(\bullet(\bullet((\bullet\bullet)((\bullet\bullet)(\bullet\bullet))))))))&\leq 0.0581878\\
    (\bullet(\bullet(\bullet(\bullet(\bullet((\bullet\bullet)(\bullet(\bullet(\bullet\bullet)))))))))&\leq 0.1129824\\
    (\bullet(\bullet(\bullet(\bullet(\bullet(\bullet((\bullet\bullet)(\bullet(\bullet\bullet)))))))))&\leq 0.1857998\\
    (\bullet(\bullet(\bullet(\bullet(\bullet(\bullet(\bullet((\bullet\bullet)(\bullet\bullet)))))))))&\leq 0.1394359\\
    (\bullet(\bullet(\bullet(\bullet(\bullet(\bullet(\bullet(\bullet(\bullet(\bullet\bullet))))))))))&\leq 1.0000029^*\\
    \end{align*}}
    
\end{multicols}
\section{Profiles of trees}
\label{app:profiles}

We provide here the outer approximations of the tree-profiles of eight pairs of trees between $4$ and $6$ leaves. The approximations obtained from levels $6$ to $11$ of the hierarchy are represented: the approximations become obviously tighter as the level increases.

\begin{center}
	\begin{figure}[ht!]
		\includegraphics[width=.9\linewidth]{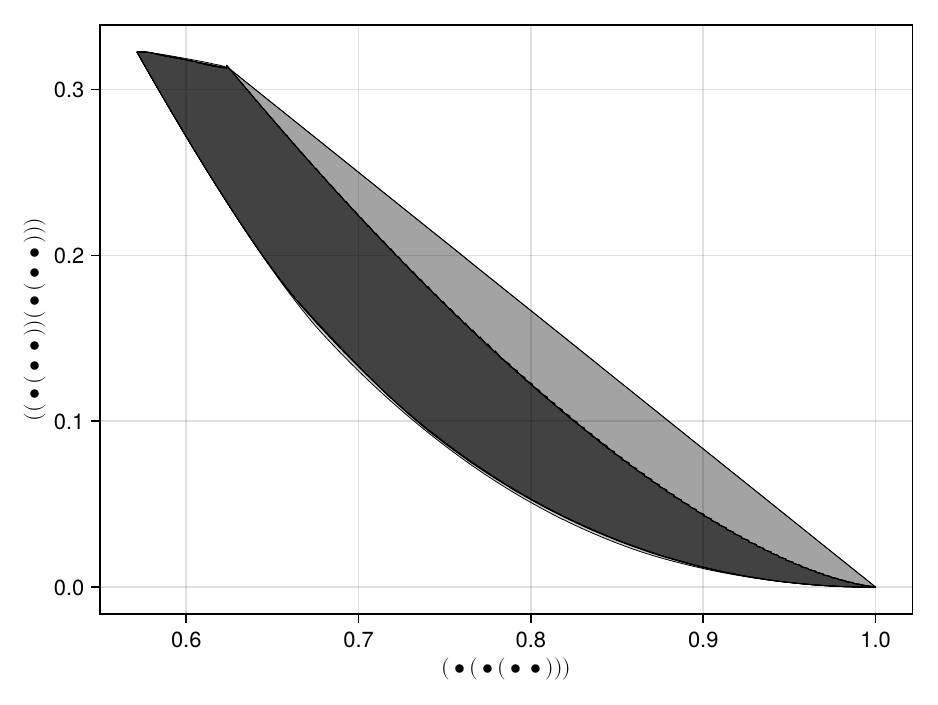}
		\vspace{-.45cm}
		\caption{Outer approximations of the tree-profile of  \begin{minipage}{0.06\textwidth}
				\resizebox{.8\textwidth}{!}{%
					\begin{forest}
						[, treeNodeRoot
								[, treeNode]
								[, treeNodeInner[, treeNode][, treeNodeInner[, treeNode],[, treeNode]]]]
					\end{forest}}\end{minipage} \hspace{-.2cm}
			and \begin{minipage}{0.09\textwidth}
				\resizebox{.7\textwidth}{!}{%
					\begin{forest}
						[, treeNodeRoot
								[, treeNodeInner[, treeNode][, treeNodeInner[, treeNode][, treeNode]]]
								[, treeNodeInner[, treeNode][, treeNodeInner[, treeNode][, treeNode]]]]
					\end{forest}}\end{minipage}}
		\label{fig:treeProfile1}
	\end{figure}
	\begin{figure}[ht!]
		\includegraphics[width=.9\linewidth]{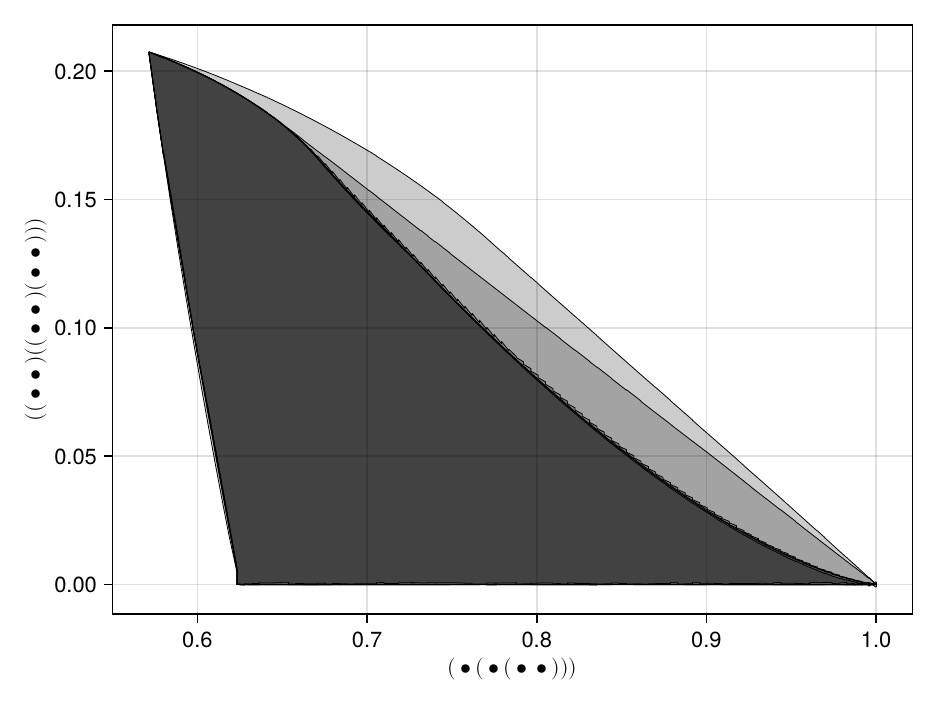}
		\vspace{-.45cm}
		\caption{Outer approximations of the tree-profile of \begin{minipage}{0.06\textwidth}
				\resizebox{.8\textwidth}{!}{%
					\begin{forest}
						[, treeNodeRoot
								[, treeNode]
								[, treeNodeInner[, treeNode][, treeNodeInner[, treeNode],[, treeNode]]]]
					\end{forest}}\end{minipage} \hspace{-.2cm}
			and \begin{minipage}{0.09\textwidth}
				\resizebox{.7\textwidth}{!}{%
					\begin{forest}
						[, treeNodeRoot
								[, treeNodeInner[, treeNode][, treeNode]]
								[, treeNodeInner[, treeNodeInner[, treeNode][, treeNode]][, treeNodeInner[, treeNode][, treeNode]]]]
					\end{forest}}\end{minipage}}
		\label{fig:treeProfile2}
	\end{figure}
	\begin{figure}[ht!]
		\includegraphics[width=.9\linewidth]{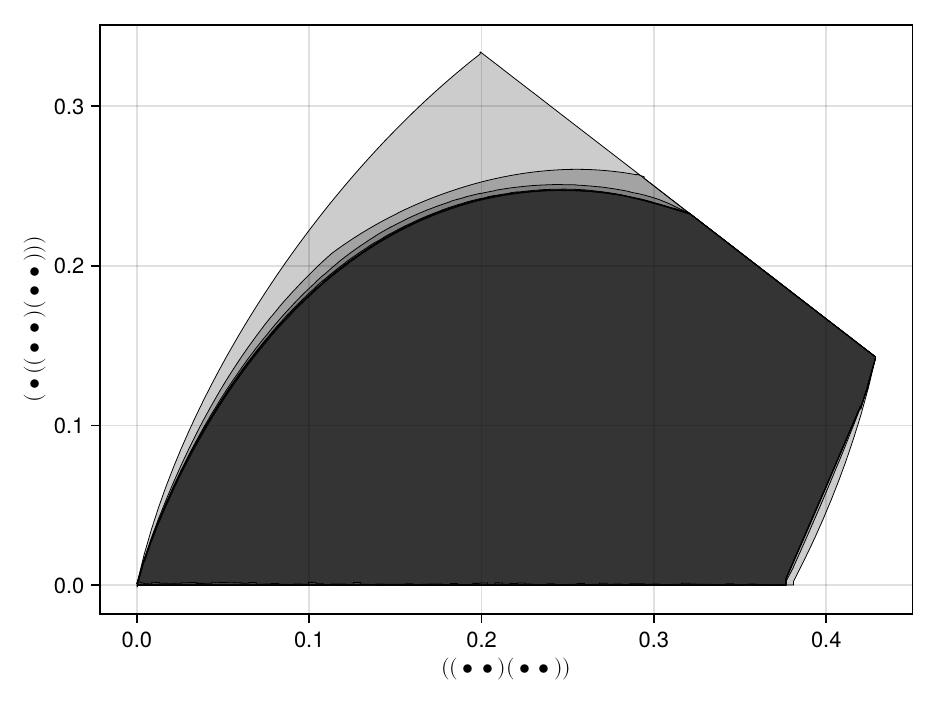}
		\vspace{-.45cm}
		\caption{Outer approximations of the tree-profile of \begin{minipage}{0.06\textwidth}
				\resizebox{\textwidth}{!}{%
					\begin{forest}
						[, treeNodeRoot
								[, treeNodeInner[, treeNode],[, treeNode]]
								[, treeNodeInner[, treeNode],[, treeNode]]]
					\end{forest}}\end{minipage}
			and \begin{minipage}{0.06\textwidth}
				\centering
				\resizebox{\textwidth}{!}{%
					\begin{forest}
						[, treeNodeRoot
								[, treeNode]
								[, treeNodeInner[, treeNodeInner[, treeNode][, treeNode]]
										[, treeNodeInner[, treeNode][, treeNode]]]]
					\end{forest}}\end{minipage}}
		\label{fig:treeProfile3}
	\end{figure}
	\begin{figure}[ht!]
		\includegraphics[width=.9\linewidth]{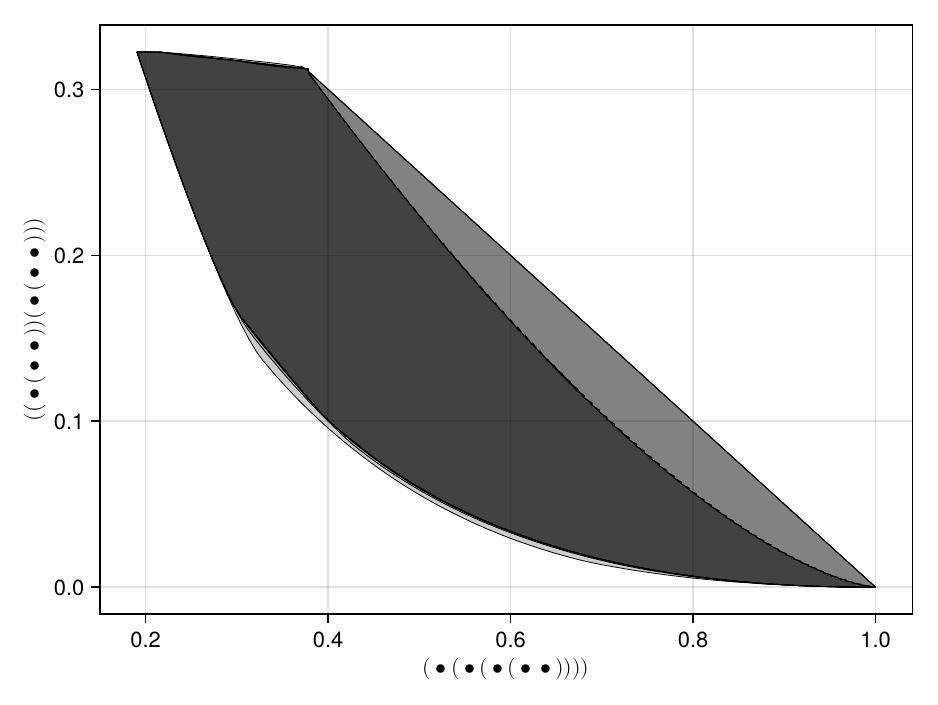}
		\vspace{-.45cm}
		\caption{Outer approximations of the tree-profile of \begin{minipage}{0.1\textwidth}
				\resizebox{.6\textwidth}{!}{%
					\begin{forest}
						[, treeNodeRoot
								[, treeNode]
								[, treeNodeInner[, treeNode][, treeNodeInner[, treeNode],[, treeNodeInner[, treeNode],[, treeNode]]]]]
					\end{forest}}\end{minipage} \hspace{-.5cm} and \begin{minipage}{0.1\textwidth}
				\resizebox{.7\textwidth}{!}{%
					\begin{forest}
						[, treeNodeRoot
								[, treeNodeInner[, treeNode][, treeNodeInner[, treeNode][, treeNode]]]
								[, treeNodeInner[, treeNode][, treeNodeInner[, treeNode][, treeNode]]]]
					\end{forest}}\end{minipage}}
		\label{fig:treeProfile4}
	\end{figure}
	\begin{figure}[ht!]
		\includegraphics[width=.9\linewidth]{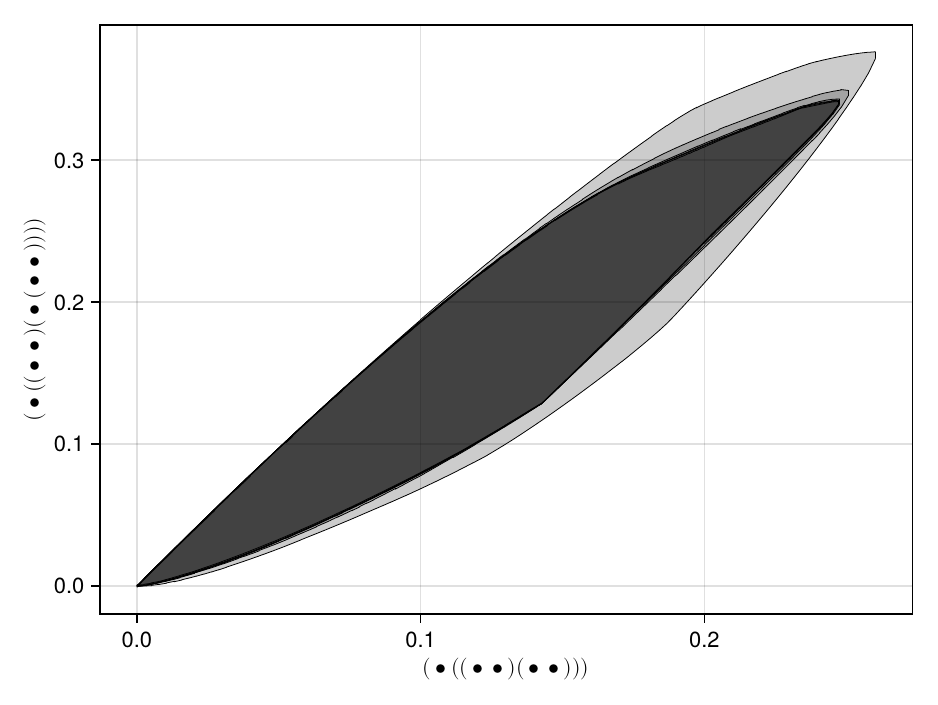}
		\vspace{-.45cm}
		\caption{Outer approximations of the tree-profile of \begin{minipage}{0.05\textwidth}
				\centering
				\resizebox{\textwidth}{!}{%
					\begin{forest}
						[, treeNodeRoot
								[, treeNode]
								[, treeNodeInner[, treeNodeInner[, treeNode][, treeNode]]
										[, treeNodeInner[, treeNode][, treeNode]]]]
					\end{forest}}\end{minipage} and \begin{minipage}{0.1\textwidth}
				\resizebox{.7\textwidth}{!}{%
					\begin{forest}
						[, treeNodeRoot
								[, treeNode]
								[, treeNodeInner[, treeNodeInner[, treeNode],[, treeNode]]
										[, treeNodeInner[, treeNode],[, treeNodeInner[, treeNode],[, treeNode]]]]]
					\end{forest}}\end{minipage}}
		\label{fig:treeProfile5}
	\end{figure}
	\begin{figure}[ht!]
		\includegraphics[width=.9\linewidth]{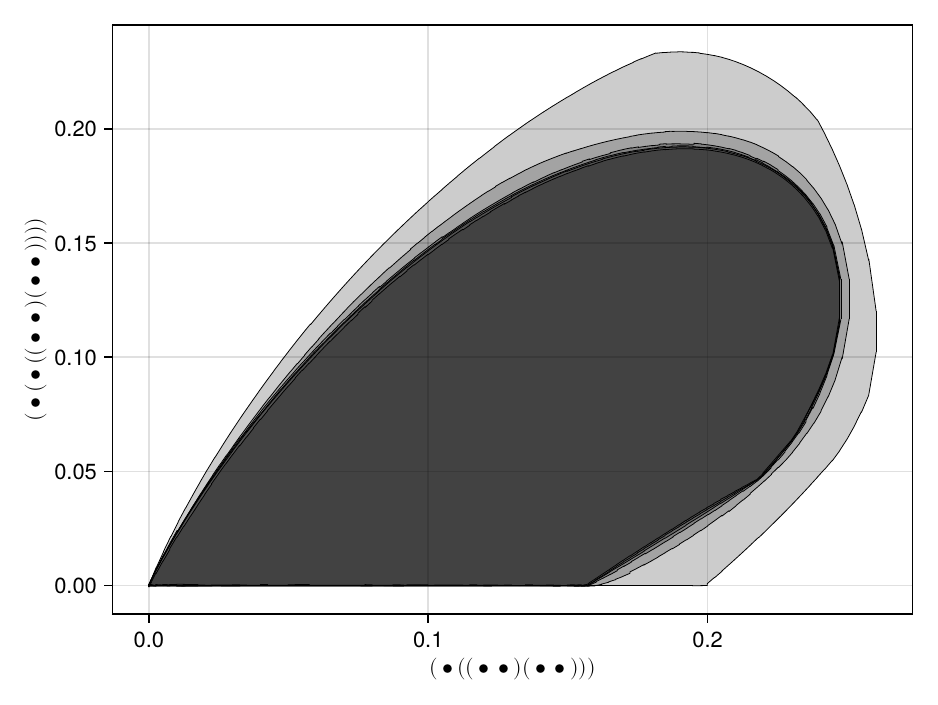}
		\vspace{-.45cm}
		\caption{Outer approximations of the tree-profile of \begin{minipage}{0.05\textwidth}
				\centering
				\resizebox{\textwidth}{!}{%
					\begin{forest}
						[, treeNodeRoot
								[, treeNode]
								[, treeNodeInner[, treeNodeInner[, treeNode][, treeNode]]
										[, treeNodeInner[, treeNode][, treeNode]]]]
					\end{forest}}\end{minipage} and  \begin{minipage}{0.1\textwidth}
				\resizebox{.6\textwidth}{!}{%
					\begin{forest}
						[, treeNodeRoot
								[, treeNode]
								[, treeNodeInner[, treeNode][, treeNodeInner[, treeNodeInner[, treeNode],[, treeNode]][, treeNodeInner[, treeNode],[, treeNode]]]]] \end{forest}}\end{minipage}}
		\label{fig:treeProfile6}
	\end{figure}
	\begin{figure}[ht!]
		\includegraphics[width=.9\linewidth]{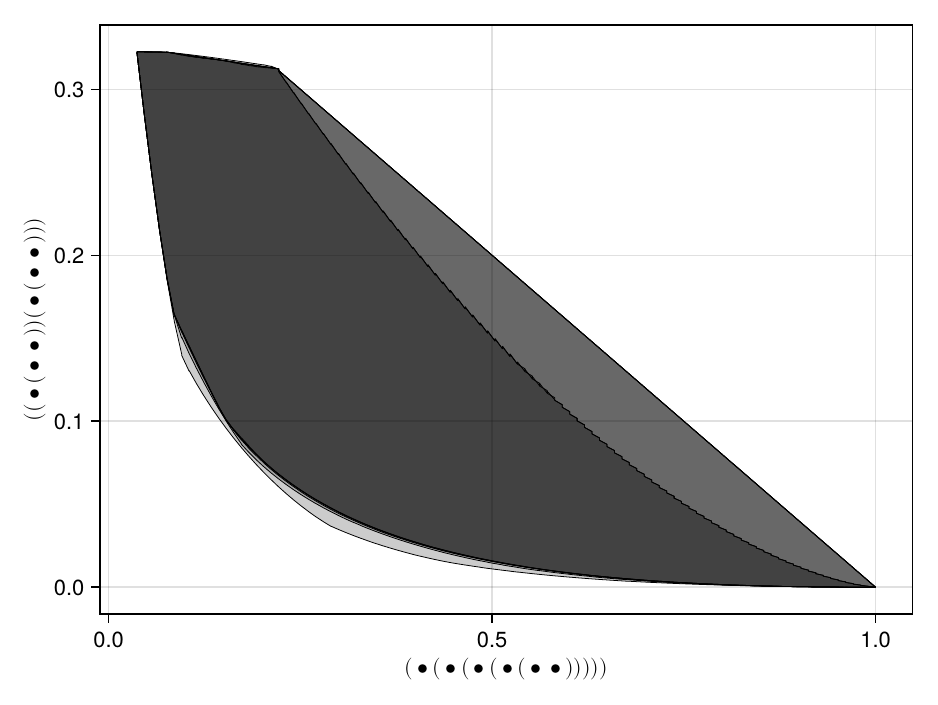}
		\vspace{-.45cm}
		\caption{Outer approximations of the tree-profile of \begin{minipage}{0.1\textwidth}
				\resizebox{.7\textwidth}{!}{%
					\begin{forest}
						[, treeNodeRoot
								[, treeNode]
								[, treeNodeInner[, treeNode][, treeNodeInner[, treeNode],[, treeNodeInner[, treeNode],[, treeNodeInner[, treeNode],[, treeNode]]]]]]
					\end{forest}}\end{minipage} \hspace{-.5cm} and \begin{minipage}{0.1\textwidth}
				\resizebox{.7\textwidth}{!}{%
					\begin{forest}
						[, treeNodeRoot
								[, treeNodeInner[, treeNode][, treeNodeInner[, treeNode][, treeNode]]]
								[, treeNodeInner[, treeNode][, treeNodeInner[, treeNode][, treeNode]]]]
					\end{forest}}\end{minipage}}
		\label{fig:treeProfile7}
	\end{figure}
	\begin{figure}[ht!]
		\includegraphics[width=.9\linewidth]{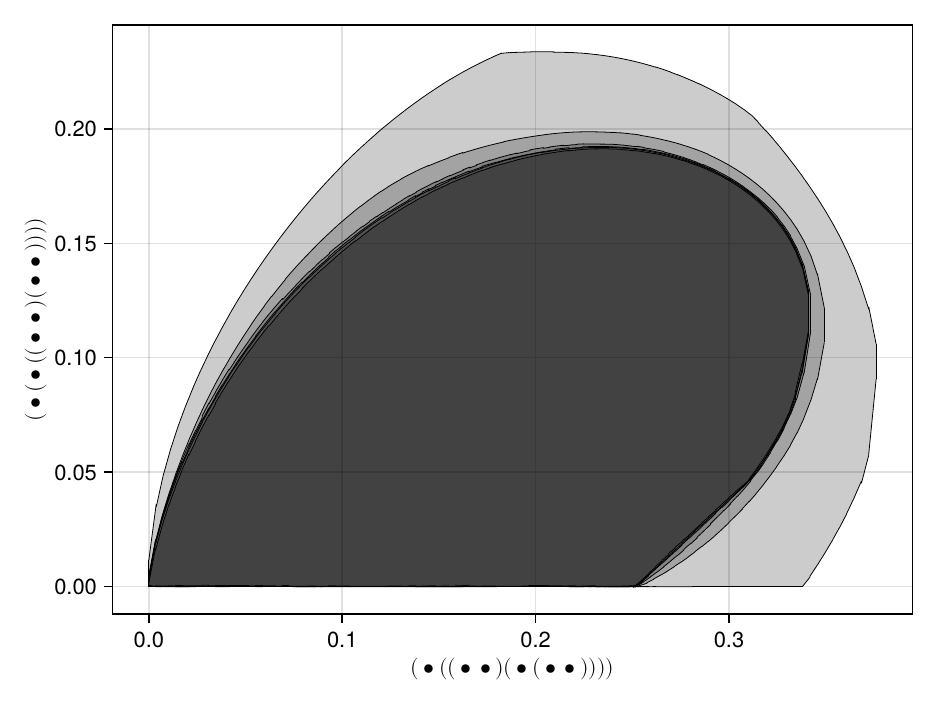}
		\vspace{-.45cm}
		\caption{Outer approximations of the tree-profile of \begin{minipage}{0.1\textwidth}
				\resizebox{.75\textwidth}{!}{%
					\begin{forest}
						[, treeNodeRoot
								[, treeNode]
								[, treeNodeInner[, treeNodeInner[, treeNode],[, treeNode]]
										[, treeNodeInner[, treeNode],[, treeNodeInner[, treeNode],[, treeNode]]]]]
					\end{forest}}\end{minipage} \hspace{-.5cm} and  \begin{minipage}{0.1\textwidth}
				\resizebox{.6\textwidth}{!}{%
					\begin{forest}
						[, treeNodeRoot
								[, treeNode]
								[, treeNodeInner[, treeNode][, treeNodeInner[, treeNodeInner[, treeNode],[, treeNode]][, treeNodeInner[, treeNode],[, treeNode]]]]] \end{forest}}\end{minipage}}
		\label{fig:treeProfile8}
	\end{figure}
\end{center}

\end{document}